\newtheorem{thm}{Theorem}[section]
\newtheorem{prop}[thm]{Proposition}
\newtheorem{lem}[thm]{Lemma}
\newtheorem{cor}[thm]{Corollary}
\theoremstyle{definition}
\newtheorem{exam}[thm]{Example}
\newtheorem{defi}[thm]{Definition}
\newtheorem{rem}[thm]{Remark}
\begin{document}

\author{Morimichi Kawasaki}

\address[Morimichi Kawasaki]{Center for Geometry and Physics, Institute for Basic Science (IBS), Pohang 37673, Republic of Korea (previous), Research Institute for Mathematical Sciences, Kyoto University\\
Kyoto 606-8502, Japan (present)}
\email{kawasaki@kurims.kyoto-u.ac.jp}
\title{Function theoretical applications of Lagrangian spectral invariants}
\maketitle
\begin{abstract}
Entov and Polterovich considered the concept of heaviness and superheaviness by the Oh-Schwarz spectral invariants.
The Oh-Schwarz spectral invariants are defined in terms of the Hamiltonian Floer theory.
In this paper, we define heaviness and superheaviness by spectral invariants defined in terms of the Lagrangian Floer theory and provide their applications.
As one of them, we define a relative symplectic capacity which measures the existence of Hamiltonian chord between two disjoint Lagrangian submanifolds and provide an upper bound of it in a special case. 
We also provide applications to non-degeneracy of spectral norms, a relative energy capacity inequality, fragmentation norm and non-displaceability.
\end{abstract}

\large

\section{Principal Results}

\subsection{Introduction}

In a series of papers by Entov and Polterovich \textit{et al.}, they constructed a fertile theory by asymptotic Oh-Schwarz spectral invariants.
Polterovich and Rosen's book ``Function theory on symplectic manifolds'' \cite{PR} is a good survey of their works.

In their theory, (abstractly) displaceable subsets play important roles in many situations.
In the present paper, we consider a relative version of their theory \textit{i.e.}
we replace abstract displaceability in their theory with relative displaceability from a fixed Lagrangian submanifold.
Our main tool is a Lagrangian spectral invariant whose properties are studied by Leclercq and Zapolsky \cite{LZ}.

We provide applications of our observation to non-displaceability, a relative version of energy capacity inequality, Poisson bracket invariant for open covers and existence problem of a Hamiltonian chord between two disjoint submanifolds.


\subsection{Notions on Hamiltonian isotopies and monotonicity of Lagrangian submanifolds}
Let $(M,\omega)$ be a symplectic manifold.
For a Hamiltonian function $H\colon M\to\mathbb{R}$ with compact support, we define the \textit{Hamiltonian vector field} $X_H$ associated with $H$ by
\[\omega(X_H,V)=-dH(V)\text{ for any }V \in \mathcal{X}(M),\]
where $\mathcal{X}(M)$ is the set of smooth vector fields on $M$.

Let $S^1$ denote $\mathbb{R}/\mathbb{Z}$.
For a (time-dependent) Hamiltonian function $H\colon S^1\times M\to\mathbb{R}$ with compact support and for $t \in S^1$, we define $H_t\colon M\to\mathbb{R}$ by $H_t(x)=H(t,x)$. 
Let $X_H^t$ denote the Hamiltonian vector field associated with $H_t$ and $\{\phi_H^t\}_{t\in\mathbb{R}}$ denote the isotopy generated by $X_H^t$ such that $\phi^0=\mathrm{id}$.
Let $\phi_H$ denote $\phi_H^1$ and $\phi_H$ is called \textit{the Hamiltonian diffeomorphism generated by } $H$.
For $x\in M$, let $\gamma_H^x{\colon}[0,1]\to{M}$ denote the path defined by $\gamma_H^x(t)=\phi_H^t(x)$.

Let $X$, $Y$ be subsets of a symplectic manifold $(M,\omega)$.
$X$ is \textit{displaceable} from $Y$ if there exists a Hamiltonian function $H\colon S^1\times M\to\mathbb{R}$ such that $\phi_H(X)\cap\bar{Y}=\emptyset$, where $\bar{Y}$ is the topological closure of $Y$.
$X$ is \textit{non-displaceable} from $Y$ otherwise.
$X$ is \textit{(abstractly) displaceable} if $X$ is displaceable from $X$ itself.

For Hamiltonian functions $F,G\colon S^1\times M\to\mathbb{R}$, let $F{\natural}G\colon S^1\times M\to\mathbb{R}$ denote a Hamiltonian function defined by $(F{\natural}G)(t,x)=F(t,x)+G(t,(\phi_F^t)^{-1}(x))$.
For a Hamiltonian function $H\colon S^1\times M\to\mathbb{R}$, let $\bar{H}\colon S^1\times M\to\mathbb{R}$ denote a Hamiltonian function defined by $\bar{H}(t,x)=-H(t,(\phi_H^t)^{-1}(x))$.
Note that $\phi_{F\natural G}^t=\phi_F^t\phi_G^t$ and $\phi_{\bar{H}}^t=(\phi_H^t)^{-1}$.

A Hamiltonian function $H\colon S^1\times M\to\mathbb{R}$ is \textit{normalized} if $\int_MH_t\omega^m=0$ for any $t$, where $m=\frac{1}{2}\mathrm{dim}(M)$.

For a Hamiltonian function $H\colon S^1\times M\to \mathbb{R}$ with compact support on a symplectic manifold $M$, we define \textit{the Hofer length} $||H||$ of $H$ by
\[||H||=\int_0^1(\max_{x\in M}H_t(x)-\min_{x\in M}H_t(x))dt.\]
For subsets $X,Y$ of $M$, we define \textit{the displacement energy} $E(X;Y)$ of $X$ from $Y$ by
\[E(X;Y)=\inf\{ ||H|| ; H\in C_c^\infty(S^1\times M), \phi_H^1(X)\cap\bar{Y}=\emptyset \} ,\]
where $\bar{Y}$ is the topological closure of $Y$.
If $X$ is non-displaceable from $Y$, we define $E(X;Y)=+\infty$.
We also define another invariant $\bar{E}(X;Y)$ by
\[\bar{E}(X;Y)=\min\{E(X;X),E(X;Y)\}.\]

For a closed Lagrangian submanifold $L$, we have the following two homomorphisms.
\[\omega\colon \pi_2(M,L)\to\mathbb{R} \text{, the \textit{symplectctic area}},\]
\[\mu\colon \pi_2(M,L)\to\mathbb{Z} \text{, the \textit{Maslov index}}.\]

For a positive number $\lambda$, $L$ is said to be $\lambda$-\textit{monotone} if
\[\omega(A)=\lambda\cdot\mu(A)\text{ for any }A\in\pi_2(M,L).\]

$L$ is said to be \textit{monotone} if $L$ is $\lambda$-\textit{monotone} for some positive number $\lambda$.
Note that $L$ is $\lambda$-\textit{monotone} for any positive number $\lambda$ if $L$ is a Lagrangian submanifold with $\pi_2(M,L)=0$.

The \textit{minimal Maslov number} $N_L$ of $L$ is defined to be the positive generator of the subgroup $\mu(\pi_2(M,L))$ of $\mathbb{Z}$ if it is nontrivial, and we set $N_L=\infty$ otherwise.

\subsection{Lagrangian heaviness and superheaviness}

Let $L$ be a monotone Lagrangian submanifold of a closed symplectic manifold $(M,\omega)$ with $N_L\geq 2$ and $QH_\ast(L)\neq0$.

For an element $\mathtt{a}$ of the quantum homology $QH_\ast(L)$ and a continuous function $H\colon S^1\times M\to\mathbb{R}$,
Leclercq and Zapolsky defined the Lagrangian spectral invariant $c^L(\mathtt{a},H)\in\mathbb{R}$ (See Section \ref{prel section}).

For an idempotent $\mathtt{a}$ of the quantum homology $QH_\ast(L)$, we define the functional  $\zeta_\mathtt{a}^L{\colon}$ $C^0(S^1\times M)\to\mathbb{R}$ as the stabilization of $c^L(\mathtt{a},\cdot);$
\[\zeta_\mathtt{a}^L(H)=\lim_{k\to+\infty}\frac{c^L(\mathtt{a},H^{\natural k})}{k},\]
where a function $H^{\natural k}\colon S^1\times M\to\mathbb{R}$ is defined by
\[H^{\natural k}=\underbrace{H\natural\cdots\natural H}_{k}.\]

\begin{defi}\label{definition of heavy}
Let $L$ be a monotone Lagrangian submanifold of a closed symplectic manifold $(M,\omega)$ with $N_L\geq 2$, $QH_\ast(L)\neq0$ and $\mathtt{a}$ an idempotent of the quantum homology $QH_\ast(L)$.
A closed subset $X$ of $(M,\omega)$ is said to be $(L,\mathtt{a})$-\textit{heavy} if
\[\zeta_\mathtt{a}^L(H)\geq\inf_{S^1\times X}H\text{ for any continuous function } H\colon S^1\times M\to\mathbb{R}.\]

$X$ is said to be $(L,\mathtt{a})$-\textit{superheavy} if
\[\zeta_\mathtt{a}^L(H)\leq\sup_{S^1\times X}H\text{ for any continuous function } H\colon S^1\times M\to\mathbb{R}.\]
A closed subset $X$ of $(M,\omega)$ is called $L$-\textit{(super)heavy} in the Lagrangian sense if $X$ is $(L,\mathtt{a})$-(super)heavy for some idempotent $\mathtt{a}$ of $QH_\ast (L)$.
\end{defi}
By an argument similar to \cite{EP09}, we see that $X$ is $(L,\mathtt{a})$-heavy if $X$ is $(L,\mathtt{a})$-superheavy.

By Theorem 39 of \cite{LZ}, we can prove the following theorem.
\begin{thm}\label{product}
Let $L_1,L_2$ be monotone Lagrangian submanifolds with same monotonicity constants of closed symplectic manifolds $(M_1,\omega_1)$, $(M_2,\omega_2)$ with $N_{L_1}\geq 2, N_{L_2}\geq2$, respectively.
Let $X_1, X_2$ be a $(L_1,\mathtt{a}_1)$-heavy, $(L_2,\mathtt{a}_2)$-heavy subset, respectively.
Then $X_1\times X_2$ is a $(L_1\times L_2,\mathtt{a}_1\times \mathtt{a}_2)$-heavy subset.
\end{thm}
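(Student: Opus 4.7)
The plan is to deduce the result from the K\"unneth-type product formula of Leclercq--Zapolsky (Theorem 39 of \cite{LZ}). For split Hamiltonians $F_1 \oplus F_2 \colon S^1 \times M_1 \times M_2 \to \mathbb{R}$ defined by $(F_1 \oplus F_2)(t, x_1, x_2) = F_1(t, x_1) + F_2(t, x_2)$, that theorem reads
$$c^{L_1 \times L_2}(\mathtt{a}_1 \times \mathtt{a}_2,\, F_1 \oplus F_2) \;=\; c^{L_1}(\mathtt{a}_1, F_1) + c^{L_2}(\mathtt{a}_2, F_2).$$
The hypothesis that $L_1$ and $L_2$ share a common monotonicity constant is exactly what guarantees that $L_1 \times L_2$ is itself monotone in $M_1 \times M_2$, so that this product formula is available.

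First I would stabilize both sides. Since Hamiltonians concatenate independently on the two factors, $(F_1 \oplus F_2)^{\natural k} = F_1^{\natural k} \oplus F_2^{\natural k}$; dividing by $k$ and letting $k \to \infty$ yields
$$\zeta^{L_1 \times L_2}_{\mathtt{a}_1 \times \mathtt{a}_2}(F_1 \oplus F_2) \;=\; \zeta^{L_1}_{\mathtt{a}_1}(F_1) + \zeta^{L_2}_{\mathtt{a}_2}(F_2).$$
Combined with the heaviness of $X_i$ in the factor $M_i$, this immediately gives
$$\zeta^{L_1 \times L_2}_{\mathtt{a}_1 \times \mathtt{a}_2}(F_1 \oplus F_2) \;\geq\; \inf_{S^1 \times X_1} F_1 + \inf_{S^1 \times X_2} F_2 \;=\; \inf_{S^1 \times X_1 \times X_2}(F_1 \oplus F_2),$$
i.e.\ the desired heaviness of $X_1 \times X_2$ restricted to split Hamiltonians.

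The remaining task is to pass from split Hamiltonians to an arbitrary continuous $H \colon S^1 \times M_1 \times M_2 \to \mathbb{R}$. Setting $c := \inf_{S^1 \times X_1 \times X_2} H$, the idea is to produce a sequence of split Hamiltonians $F_1^{(n)} \oplus F_2^{(n)} \leq H$ with $\inf_{S^1 \times X_1} F_1^{(n)} + \inf_{S^1 \times X_2} F_2^{(n)} \to c$; then the monotonicity and $C^0$-Lipschitz continuity of $\zeta^{L_1 \times L_2}$ (both inherited from the corresponding properties of $c^{L_1 \times L_2}$ established in \cite{LZ}) yield $\zeta^{L_1 \times L_2}_{\mathtt{a}_1 \times \mathtt{a}_2}(H) \geq c$.

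This last step is where I expect the main obstacle to lie: the naive choice $F_1(t, x_1) = \inf_{x_2 \in M_2} H(t, x_1, x_2)$, $F_2 \equiv 0$, satisfies $F_1 \oplus F_2 \leq H$ but only achieves $\inf_{S^1 \times X_1} F_1 = \inf_{S^1 \times X_1 \times M_2} H$, which is in general strictly less than $c$ because the inner infimum is taken over all of $M_2$ rather than over $X_2$. Overcoming this should require cut-off functions concentrated near $X_1 \times X_2$ and a smoothing/approximation argument compatible with $C^0$-continuity, in parallel with the template used to pass from split to general Hamiltonians in the product theorem for heavy sets in the absolute Oh--Schwarz setting of \cite{EP09}.
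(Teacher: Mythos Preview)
Your approach is exactly what the paper intends: the paper gives no detailed proof, only the remark that the result follows from Theorem~39 of \cite{LZ}, which is precisely the product formula you invoke and then stabilize.

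Two refinements to your reduction step. First, no sequence is needed. After shifting and truncating you may assume $H$ is autonomous, $0\le H\le 1$, and $H\equiv 1$ on a product neighbourhood $U_1\times U_2$ of $X_1\times X_2$; then for cut-offs $F_i\colon M_i\to[0,1]$ with $F_i\equiv 1$ near $X_i$ and $\mathrm{Supp}(F_i)\subset U_i$, the single split function $F_1\oplus(F_2-1)$ satisfies $F_1+F_2-1\le H$ everywhere (check the two cases $(x_1,x_2)\in U_1\times U_2$ and $(x_1,x_2)\notin U_1\times U_2$ separately) and gives $\zeta(F_1)+\zeta(F_2-1)=1+0=1$. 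Second, your claim that monotonicity of $\zeta$ is ``inherited from $c^L$'' is immediate only for autonomous Hamiltonians, since for time-dependent $H$ the iterate $H^{\natural k}$ depends on the flow of $H$ and a pointwise inequality $F\le G$ need not propagate to $F^{\natural k}\le G^{\natural k}$. The fix is to reduce to the autonomous case first: set $\underline H(x)=\inf_t H(t,x)$, observe that $k\underline H$ is pointwise below the reparametrized Hamiltonian $H^{(k)}(t,x)=kH(kt\bmod 1,x)$ generating $\tilde\phi_H^k$, and conclude $c^L(\mathtt{a},k\underline H)\le c^L(\mathtt{a},H^{\natural k})$ from Proposition~\ref{LZ}(1),(2), hence $\zeta(\underline H)\le\zeta(H)$ with $\inf_{X_1\times X_2}\underline H=\inf_{S^1\times X_1\times X_2}H$.
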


\subsection{Examples}

We provide some examples of heavy and superheavy subsets.
A priority of Entov-Polterovich's theory is that we can prove non-displaceability of singular subsets.
Entov and Polterovich proved that a stem which can be singular is superheavy (\cite{EP09}).
We pose a generalization of concept of stem.

\begin{defi}\label{L stem}
Let $L$ be a subset of a closed symplectic manifold $M$.
Let $\mathbb{A}$ be a finite-dimensional Poisson-commutative subspace of $C^\infty(M)$ and $\Phi\colon M\to\mathbb{A}^\ast$ be the moment map defined by $\langle \Phi(x),F \rangle =F(x)$. 
 A non-empty fiber $\Phi^{-1}(p)$, $p \in \mathbb{A}^\ast$ is called \textit{an $L$-stem} of $\mathbb{A}$ if each non-empty fibers $\Phi^{-1}(q)$ with $q \neq p$ is displaceable from $L$ or $\Phi^{-1}(q)$ itself.
 If a subset $X$ of $M$ is an $L$-stem of a finite-dimensional Poisson-commutative subspace of $C^\infty(M)$, $X$ is called just \textit{an $L$-stem}.
\end{defi}

For example, $L$ itself is an $L$-stem if $L$ is a closed subset of $M$.
Any stem in the original sense of \cite{EP09} is also an $L$-stem for any $L$.

\begin{thm}\label{stem shv}
Let $L$ be a monotone Lagrangian submanifold of a closed symplectic manifold $(M,\omega)$ with $N_L\geq2$ and $QH_\ast(L)\neq0$.
Then any $L$-stem is $(L,\mathtt{a})$-superheavy for any non-trivial idempotent $\mathtt{a}$.
\end{thm}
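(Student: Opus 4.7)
The plan is to adapt the Entov--Polterovich argument that stems are superheavy \cite{EP09} to the Lagrangian setting, with $\zeta^L_\mathtt{a}$ playing the role of the partial symplectic quasi-state. First I would record the properties of $\zeta^L_\mathtt{a}$ that will be used; all of them are inherited from the Leclercq--Zapolsky axioms for $c^L$ in \cite{LZ} together with the stabilization defining $\zeta^L_\mathtt{a}$: (i) \emph{monotonicity}, $H\leq K\Rightarrow\zeta^L_\mathtt{a}(H)\leq\zeta^L_\mathtt{a}(K)$; (ii) \emph{shift}, $\zeta^L_\mathtt{a}(c+H)=c+\zeta^L_\mathtt{a}(H)$ for $c\in\mathbb{R}$; (iii) \emph{subadditivity for Poisson-commuting pairs}, $\zeta^L_\mathtt{a}(F+G)\leq\zeta^L_\mathtt{a}(F)+\zeta^L_\mathtt{a}(G)$ whenever $\{F_t,G_s\}\equiv 0$; (iv) \emph{vanishing}, $\zeta^L_\mathtt{a}(F)\leq 0$ whenever $\mathrm{supp}(F)$ is displaceable from $L$ or is abstractly displaceable; and (v) $C^0$-continuity in $H$ (coming from Hofer-continuity of $c^L(\mathtt{a},\cdot)$).

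Granting these, fix a continuous $H\colon S^1\times M\to\mathbb{R}$ and set $c=\sup_{S^1\times\Phi^{-1}(p)}H$; I want $\zeta^L_\mathtt{a}(H)\leq c$. For each $\epsilon>0$, the function $\mu(q):=\sup_{S^1\times\Phi^{-1}(q)}H$ is upper semi-continuous on $\mathbb{A}^\ast$, since $\{q:\mu(q)\geq\alpha\}$ is the image under $(t,x)\mapsto\Phi(x)$ of the compact set $\{(t,x):H(t,x)\geq\alpha\}$. Thus there is an open $U_p\ni p$ with $H<c+\epsilon$ on $S^1\times\Phi^{-1}(U_p)$. Shrink to $U'_p\Subset U_p$. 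For each $q\in\Phi(M)\setminus U'_p$, the $L$-stem hypothesis gives that $\Phi^{-1}(q)$ is displaceable from $L$ or from itself, and a standard Hausdorff-continuity argument (using a Hofer bound on a displacing Hamiltonian) produces a neighborhood $V_q$ of $q$ in $\mathbb{A}^\ast$ whose $\Phi$-preimage retains the same form of displaceability. Extract a finite subcover $V_1,\dots,V_n$ of the compact set $\Phi(M)\setminus U'_p$, and choose a smooth $\chi\colon\mathbb{A}^\ast\to[0,\infty)$ supported in $\bigcup_iV_i$, vanishing on $U'_p$, and bounded below by $\|H\|_{C^0}-c-\epsilon$ on $\mathbb{A}^\ast\setminus U_p$. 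A subordinate partition of unity decomposes $\chi=\sum_{i=1}^n\chi_i$ with $\mathrm{supp}(\chi_i)\subset V_i$.

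The critical feature of this construction is that the pullbacks $\chi_i\circ\Phi\in C^\infty(M)$ pairwise Poisson commute: if $F_1,\dots,F_m$ is a basis of $\mathbb{A}$ and $\Phi=(F_1,\dots,F_m)$, then $\{\chi_i\circ\Phi,\chi_j\circ\Phi\}=\sum_{k,\ell}(\partial_k\chi_i)(\partial_\ell\chi_j)\{F_k,F_\ell\}=0$. By construction $H\leq c+\epsilon+\chi\circ\Phi$ pointwise on $S^1\times M$, so applying (i), (ii), (iii) and (iv) in turn,
\begin{align*}
\zeta^L_\mathtt{a}(H)
&\leq \zeta^L_\mathtt{a}(c+\epsilon+\chi\circ\Phi)
= c+\epsilon+\zeta^L_\mathtt{a}(\chi\circ\Phi) \\
&\leq c+\epsilon+\sum_{i=1}^n\zeta^L_\mathtt{a}(\chi_i\circ\Phi)
\leq c+\epsilon,
\end{align*}
because each $\chi_i\circ\Phi$ is supported in the displaceable set $\Phi^{-1}(V_i)$. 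Letting $\epsilon\to 0$ gives the desired bound $\zeta^L_\mathtt{a}(H)\leq\sup_{S^1\times\Phi^{-1}(p)}H$.

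The main obstacle is property (iv), specifically the vanishing when $\mathrm{supp}(F)$ is only abstractly displaceable and not displaceable from $L$. For the $L$-displaceable case, vanishing is immediate from the displacement-energy estimate for $c^L$ proved in \cite{LZ}, which after dividing by $k$ in the stabilization forces $\zeta^L_\mathtt{a}(F)\leq 0$. For the abstractly displaceable case, one has to compare $c^L(\mathtt{a},\cdot)$ with the ambient Oh--Schwarz spectral invariant through the PSS-type comparison morphism appearing in \cite{LZ}, and then import the classical Entov--Polterovich vanishing on abstractly displaceable supports; keeping this comparison uniform in $k$ through the asymptotic limit defining $\zeta^L_\mathtt{a}$ is the most delicate step.
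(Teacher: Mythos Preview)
Your proposal is correct and follows essentially the same route as the paper: the paper explicitly omits the proof, saying it is ``quite similar to the one of the original stem case if we know (1) and ($1^\prime$) of Proposition~\ref{quasi-state basic}'', and those two partial quasi-additivity statements are exactly your properties (iii) and (iv) combined (for Poisson-commuting $F,G$ with $\mathrm{Supp}(G)$ displaceable from $L$, respectively abstractly displaceable). The ``main obstacle'' you single out---vanishing of $\zeta^L_\mathtt{a}$ on abstractly displaceable supports---is what the paper handles in Proposition~\ref{abstract disp bounded spec ham 1} via the module inequality $c^L(\mathtt{a},F)\leq c([M],F)+c^L(\mathtt{a},0)$ from \cite{LZ} together with the classical Ostrover bound for $c([M],\cdot)$, which is precisely the comparison you sketch; note, though, that the Lagrangian displacement estimate you attribute to \cite{LZ} is actually proved in this paper as Proposition~\ref{Ostrover}.
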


It is a natural problem whether (super)heaviness with respect to some idempotent implies (super)heaviness with respect to another idempotent.
We provide the following two results.
In Section \ref{module section}, we provide more generalized theorems.

\begin{cor}\label{module find 1}
Let $L$ be a monotone Lagrangian submanifold of a closed symplectic manifold $(M,\omega)$ with $N_L\geq2$ and $QH_\ast(L)\neq0$.
Let $\mathtt{e}^\prime$ be an idempotent of $QH_\ast(L)$.
Then 
\begin{itemize}
\item
Any $[M]$-superheavy subset $X$ is $(L,\mathtt{e}^{\prime})$-superheavy,
\item
Any $(L,\mathtt{e}^{\prime})$-heavy subset $Y$ is $[M]$-heavy, in particular, non-displaceable from $Y$ itself.
\end{itemize}
\end{cor}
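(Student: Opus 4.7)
The plan is to reduce both bullet points to a single pointwise inequality between the stabilized spectral functionals, namely
\[ \zeta^L_{\mathtt{e}^\prime}(H) \le \zeta_{[M]}(H) \]
for every continuous $H\colon S^1\times M\to\mathbb{R}$, where $\zeta_{[M]}$ denotes the asymptotic Oh--Schwarz functional at the fundamental class $[M]\in QH_*(M)$. Once this inequality is in hand, both statements follow at once. For the first, if $X$ is $[M]$-superheavy then $\zeta^L_{\mathtt{e}^\prime}(H)\le\zeta_{[M]}(H)\le \sup_{S^1\times X}H$, so $X$ is $(L,\mathtt{e}^\prime)$-superheavy. For the second, if $Y$ is $(L,\mathtt{e}^\prime)$-heavy then $\zeta_{[M]}(H)\ge\zeta^L_{\mathtt{e}^\prime}(H)\ge \inf_{S^1\times Y}H$, so $Y$ is $[M]$-heavy; the non-displaceability of $Y$ from itself is then the classical Entov--Polterovich consequence of $[M]$-heaviness.

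To produce the inequality I would invoke the module structure of the Lagrangian quantum homology $QH_*(L)$ over the ambient quantum homology $QH_*(M)$, together with the associated triangle inequality for spectral invariants proved by Leclercq and Zapolsky (their Theorem 39, already cited in Theorem~\ref{product}). Schematically, this gives
\[ c^L(\alpha\cdot\mathtt{a},\,F\natural G)\;\le\;c(\alpha,F)\;+\;c^L(\mathtt{a},G) \]
for any $\alpha\in QH_*(M)$ and $\mathtt{a}\in QH_*(L)$. Specializing $\alpha=[M]$ (which acts as the identity for this module action), $\mathtt{a}=\mathtt{e}^\prime$, $F=H$, and $G\equiv0$ yields
\[ c^L(\mathtt{e}^\prime,H)\;=\;c^L([M]\cdot\mathtt{e}^\prime,H)\;\le\;c([M],H)\;+\;c^L(\mathtt{e}^\prime,0). \]
Applying this to the iterated concatenation $H^{\natural k}$, dividing by $k$, and letting $k\to\infty$ makes the bounded term $c^L(\mathtt{e}^\prime,0)/k$ disappear, so we recover $\zeta^L_{\mathtt{e}^\prime}(H)\le\zeta_{[M]}(H)$ as required.

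The main obstacle, I expect, is bookkeeping rather than ideology: one has to check that the module-action triangle inequality of Leclercq--Zapolsky is stated in exactly the sign and normalization conventions used here for $c^L$ and $c$, and that the unit $[M]$ really does act as the identity on an arbitrary idempotent $\mathtt{e}^\prime\in QH_*(L)$ (so that $[M]\cdot\mathtt{e}^\prime=\mathtt{e}^\prime$). Once these two points are verified, the passage to the stabilized functionals $\zeta$ is a mechanical subadditivity argument exactly parallel to the proof that $\zeta^L_\mathtt{a}$ is well-defined in the first place, and no further Floer-theoretic input is needed.
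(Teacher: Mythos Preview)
Your proposal is correct and is essentially the same argument the paper gives: the paper proves the inequality $\zeta^L_{\mathtt{e}'}(F)\le\zeta_{[M]}(F)$ (stated more generally as Lemma~\ref{module structure lemma}) by applying the Leclercq--Zapolsky module triangle inequality (Proposition~\ref{ham and lagr}, i.e.\ Proposition~5 of \cite{LZ}, not Theorem~39) with $\mathtt{e}=[M]$ and $\mathtt{e}''=\mathtt{e}'$, then dividing by $k$ and passing to the limit; both bullets then follow exactly as you say. The only cosmetic difference is that the paper packages this as a special case of the more general Theorem~\ref{module structure apply}, where $[M]$ is replaced by an arbitrary idempotent $\mathtt{e}\in QH_*(M)$ admitting some $\mathtt{e}''\in QH_*(L)$ with $\mathtt{e}\bullet\mathtt{e}''=\mathtt{e}'$.
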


\begin{cor}\label{module find 2}
Let $L$ be a monotone Lagrangian submanifold of a closed symplectic manifold $(M,\omega)$ with $N_L\geq2$ and $QH_\ast(L)\neq0$.
Let $\mathtt{e}^\prime$ be a non-trivial idempotent of $QH_\ast(L)$.
Then any $(L,\mathtt{e}^{\prime})$-heavy subset $X$ is $(L,[L])$-heavy and any $(L,[L])$-superheavy subset $Y$ is $(L,\mathtt{e}^{\prime})$-superheavy.
\end{cor}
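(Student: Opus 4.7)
The plan is to reduce both claims to the single pointwise inequality
\[
\zeta_{\mathtt{e}'}^L(H)\leq\zeta_{[L]}^L(H),\qquad H\in C^0(S^1\times M).
\]
Granting this, if $X$ is $(L,\mathtt{e}')$-heavy then for every continuous $H$ one has $\zeta_{[L]}^L(H)\geq\zeta_{\mathtt{e}'}^L(H)\geq\inf_{S^1\times X}H$, so $X$ is $(L,[L])$-heavy; dually, if $Y$ is $(L,[L])$-superheavy then $\zeta_{\mathtt{e}'}^L(H)\leq\zeta_{[L]}^L(H)\leq\sup_{S^1\times Y}H$, so $Y$ is $(L,\mathtt{e}')$-superheavy.

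To establish the inequality I would invoke Leclercq--Zapolsky's triangle inequality for the Lagrangian quantum product on $QH_*(L)$,
\[
c^L(\mathtt{a}\ast\mathtt{b},\,F\natural G)\leq c^L(\mathtt{a},F)+c^L(\mathtt{b},G),
\]
valid for any classes $\mathtt{a},\mathtt{b}\in QH_*(L)$ and smooth compactly supported Hamiltonians $F,G$. Specializing to $\mathtt{a}=[L]$, $\mathtt{b}=\mathtt{e}'$, $F=H^{\natural k}$, and $G\equiv 0$, and using that $[L]$ is the unit of $QH_*(L)$ so that $[L]\ast\mathtt{e}'=\mathtt{e}'$, one obtains
\[
c^L(\mathtt{e}',H^{\natural k})\leq c^L([L],H^{\natural k})+c^L(\mathtt{e}',0).
\]
Dividing by $k$ and letting $k\to\infty$ yields $\zeta_{\mathtt{e}'}^L(H)\leq\zeta_{[L]}^L(H)$ for smooth $H$, and this extends to continuous $H$ by the Hofer-type continuity of the Lagrangian spectral invariant in the Hamiltonian argument, which is the mechanism already used to define $\zeta_{\mathtt{a}}^L$ on $C^0(S^1\times M)$.

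The main point to verify is that the stabilization is legitimate, i.e.\ that $c^L(\mathtt{e}',0)$ is a \emph{finite} real number so that the term $c^L(\mathtt{e}',0)/k$ dies in the limit. This is precisely where the hypothesis that $\mathtt{e}'$ is a \emph{non-trivial} idempotent enters, since in Leclercq--Zapolsky's construction $c^L(\cdot,0)$ evaluates to $-\infty$ only on the zero class and is finite elsewhere. Beyond this observation the argument is a direct transcription of the classical Entov--Polterovich stabilization scheme to the Lagrangian setting, so I expect no substantive obstacle.
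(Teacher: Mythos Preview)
Your argument is correct and is precisely the Entov--Polterovich stabilization scheme (Theorem~1.5 of \cite{EP09}) transported to the Lagrangian quantum product, which is exactly what the paper invokes; the paper omits the details for this reason. The key inequality $\zeta_{\mathtt{e}'}^L\leq\zeta_{[L]}^L$ and its derivation via the triangle inequality with $[L]\ast\mathtt{e}'=\mathtt{e}'$ are the intended ingredients.
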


The proof of Corollary \ref{module find 2} is similar to Theorem 1.5 of \cite{EP09} and thus we omit the proof.

We provide many examples of heavy or superheavy subsets when $M$ is a 2-torus.
For a positive number $R$, we define the 2-torus $T_R^2$ by $T_R^2=\mathbb{R}/R\mathbb{Z}\times\mathbb{R}/\mathbb{Z}$ with the coordinates $(x,y)$.
For $s\in\mathbb{R}/R\mathbb{Z}$ and $t\in\mathbb{R}/\mathbb{Z}$ , let $L_{\mathrm{m}}^s$ and $L_{\mathrm{l}}^t$ be a meridian curve $\{x=s\}$ and a longitude curve $\{y=t\}$ , respectively.

We have many examples of heavy, superheavy subsets of $T_R^2$.

\begin{prop}\label{torus summary}
For any positive number $R$, the following propositions hold on $T_R^2$.
\begin{enumerate}
\item
$L_{\mathrm{m}}^{s^\prime}$ is $(L_{\mathrm{m}}^s,[L_{\mathrm{m}}^s])$-superheavy if $s=s^\prime$.

\item
$L_{\mathrm{m}}^{s^\prime}$ is not $(L_{\mathrm{m}}^s,[L_{\mathrm{m}}^s])$-heavy if $s\neq s^\prime$.

\item
$L_{\mathrm{l}}^t$ is $(L_{\mathrm{m}}^s,[L_{\mathrm{m}}^s])$-heavy for any $s,t$.

\item
$L_{\mathrm{l}}^t$ is not $(L_{\mathrm{m}}^s,[L_{\mathrm{m}}^s])$-superheavy for any $s,t$.

\item
$L_{\mathrm{m}}^{s^\prime}\cup L_{\mathrm{l}}^t$ is $(L_{\mathrm{m}}^s,[L_{\mathrm{m}}^s])$-superheavy for any $s,s^\prime,t$.
\end{enumerate}
\end{prop}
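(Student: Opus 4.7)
The plan is to handle the five statements in turn, using Theorem~\ref{stem shv} wherever an appropriate $L_m^s$-stem can be exhibited, and direct spectral-invariant computations on $T_R^2$ otherwise. Throughout we use that $\pi_2(T_R^2, L_m^s) = 0$, so monotonicity is automatic, $N_{L_m^s} = \infty \geq 2$, and $QH_\ast(L_m^s) \cong H_\ast(S^1) \neq 0$, meeting the hypotheses of every result invoked.

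Items (1), (2), and (5) admit short treatments. For (1), take the one-dimensional Poisson-commutative subspace $\mathbb{A} = \mathrm{span}(\sin^2(\pi(x-s)/R))$; the zero fiber is $L_m^s$, and every other fiber is a union of at most two meridians, each displaceable from $L_m^s$ by a small $x$-translation. For (5) with $\mathbb{A} = \mathrm{span}(F)$, $F(x,y) = \sin^2(\pi(x-s')/R)\sin^2(\pi(y-t))$, the zero fiber is $L_m^{s'} \cup L_l^t$, and each nonzero fiber $F^{-1}(c)$ for $c \in (0,1]$ is either a point or a single null-homotopic closed curve lying inside an open cell of the grid $\{x=s'\} \cup \{y=t\}$, hence abstractly displaceable; this makes $L_m^{s'} \cup L_l^t$ an $L_m^s$-stem for any $s$, and Theorem~\ref{stem shv} yields (1) and (5). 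Item (2) follows from the standard Hofer-Lipschitz consequence that $(L, \mathtt{a})$-heaviness forces non-displaceability from $L$: since $L_m^{s'}$ with $s' \neq s$ is clearly displaceable from $L_m^s$ by a small $x$-translation, heaviness fails.

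For item (4), the plan is to exhibit an explicit witness. Take $H = f(y) \in C^\infty(T_R^2)$ a nonnegative bump depending only on $y$, supported near $\{y = t + 1/2\}$ and vanishing near $L_l^t$; then $\sup_{L_l^t} H = 0$, while the Morse computation sketched below for item (3) gives $\zeta^{L_m^s}_{[L_m^s]}(H) = \max_y f > 0$, contradicting superheaviness.

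The main obstacle is item (3). The plan is to show $\zeta^{L_m^s}_{[L_m^s]}(H) \geq \inf_{S^1 \times L_l^t} H$ for every continuous $H$ by reducing to autonomous functions of $y$ alone. Set $\tilde H(y) := \inf_{(\tau, x)} H(\tau, x, y)$; by compactness $\tilde H$ is continuous, time-independent, and $\tilde H \leq H$ pointwise. Monotonicity of $\zeta^{L_m^s}_{[L_m^s]}$ (a consequence of the sharp Hofer bound on $c^L$) gives $\zeta^{L_m^s}_{[L_m^s]}(H) \geq \zeta^{L_m^s}_{[L_m^s]}(\tilde H)$, so it suffices to prove $\zeta^{L_m^s}_{[L_m^s]}(f) = \max_y f(y)$ for every autonomous $f \in C^\infty(S^1)$; granting that, $\max_y \tilde H(y) \geq \tilde H(t) = \inf_{S^1 \times L_l^t} H$ finishes the argument. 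For $\|f\|_{C^1}$ small, the Hamiltonian chords from $L_m^s$ to itself are the constant paths at critical points of $f$ with actions $f(y_i)$, the Lagrangian Floer complex collapses to the Morse complex of $f|_{L_m^s} \equiv f$ on $S^1$, and the fundamental class $[L_m^s]$ is represented by the global max, giving $c^{L_m^s}([L_m^s], f) = \max f$; positive homogeneity of $\zeta$ promotes this to all $f$. The delicate step is controlling the winding Hamiltonian chords (solutions of $kf'(y) \in R\mathbb{Z} \setminus \{0\}$) that appear in the stabilization $\lim c(kf)/k$ as $k \to \infty$; the natural fix is to restrict to the zero-winding component of the path space from $L_m^s$ to itself, in which the fundamental class lives.
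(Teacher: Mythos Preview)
Your treatments of (1), (2), and (5) are essentially the paper's (the paper merely asserts these are $L_{\mathrm m}^s$-stems without writing explicit moment maps). For (4) the paper instead observes that a second longitude $L_{\mathrm l}^{t'}$, $t'\neq t$, is heavy by (3) and disjoint from $L_{\mathrm l}^t$, contradicting the heavy/superheavy intersection property (Proposition~\ref{hv shv intersect}); your explicit-witness approach is different but also reduces to (3).

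The genuine gap is in (3). The sentence ``positive homogeneity of $\zeta$ promotes this to all $f$'' is not a valid step: knowing $c^{L_{\mathrm m}^s}([L_{\mathrm m}^s],f)=\max f$ for $C^1$-small $f$ tells you nothing about $\zeta(f)=\lim_k c^{L_{\mathrm m}^s}([L_{\mathrm m}^s],kf)/k$, since $kf$ leaves the small regime and homogeneity of $\zeta$ only reshuffles this same limit. Your stated worry about winding chords is also misplaced: the spectral invariants here are built on $\Omega_0(L_{\mathrm m}^s)$, the contractible component, so chords with $kf'(y_0)\in R\mathbb{Z}\setminus\{0\}$ never enter the complex to begin with. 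What \emph{is} true is that for every $k$ the only generators of $CF_\ast(kf;L_{\mathrm m}^s)$ are the constant chords at $\mathrm{Crit}(f)$ with actions $kf(y_i)$; the step you still owe is that $c^{L_{\mathrm m}^s}([L_{\mathrm m}^s],kf)=k\max f$ for all $k$, not just small ones. One elementary completion: for $f$ Morse with exactly two critical values $v_{\max}>v_{\min}$, spectrality together with $\pi_2(T_R^2,L_{\mathrm m}^s)=0$ gives $c^{L_{\mathrm m}^s}([L_{\mathrm m}^s],sf)\in\{sv_{\max},sv_{\min}\}$ for every $s>0$; continuity in $s$ plus the Morse computation near $s=0^+$ then forces the $v_{\max}$-branch for all $s$.

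The paper follows a different route for (3): it first proves the non-degeneracy of the Lagrangian spectral norm (Theorem~\ref{non-deg of spec norm}) and then, in Lemma~\ref{torus ham}, combines it with spectrality to deduce $c^{L_{\mathrm m}^s}([L_{\mathrm m}^s],F)=v_{\max}$ for $F(x,y)=f(y)$ with two critical values---since $c(F)+c(\bar F)>0$ and $\mathrm{Spec}(\bar F;L_{\mathrm m}^s)=\{-v_{\max},-v_{\min}\}$, the only consistent choice is $c(F)=v_{\max}$. Your Morse/continuity line would be more elementary in that it sidesteps Theorem~\ref{non-deg of spec norm}, but you must actually carry it out rather than appeal to homogeneity.
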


Later, we provide a generalization of (3) of Proposition \ref{torus summary} (See Proposition \ref{Riemannian}).


\subsection{Applications to non-displaceability}

As an application of Lagrangian spectral invariants to non-displaceablity, we have the following theorem.

\begin{thm}\label{hv from shv}
Let $L$ be a monotone Lagrangian submanifold of a closed symplectic manifold $(M,\omega)$ with $N_L\geq 2$.
Let $X$, $Y$ be a $(L,\mathtt{a})$-heavy subset, a $(L,\mathtt{a})$-superheavy subset of $M$, respectively.
Then $X$ is non-displaceable from $Y$.
\end{thm}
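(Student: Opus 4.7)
The plan is to adapt the Entov--Polterovich non-displaceability argument of \cite{EP09} to the Lagrangian functional $\zeta_{\mathtt a}^L$, arguing by contradiction. Suppose there is some $H\in C_c^\infty(S^1\times M)$ with $\phi_H^1(X)\cap\overline Y=\emptyset$, and set $\phi=\phi_H^1$. Since $X$ is closed in the closed manifold $M$, the set $\phi(X)$ is compact and disjoint from the closed set $\overline Y$, so there is a smooth autonomous function $F\colon M\to[0,1]$ with $F\equiv 0$ on an open neighborhood of $\overline Y$ and $F\equiv 1$ on an open neighborhood of $\phi(X)$.

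Applying the defining inequality for superheaviness of $Y$ to $F$ gives
\[
\zeta_{\mathtt a}^L(F)\;\le\;\sup_{S^1\times Y}F\;=\;0,
\]
while applying the defining inequality for heaviness of $X$ to the autonomous function $F\circ\phi$, which is identically $1$ on $X$ because $\phi(X)$ lies in the locus where $F\equiv 1$, gives
\[
\zeta_{\mathtt a}^L(F\circ\phi)\;\ge\;\inf_{S^1\times X}F\circ\phi\;=\;1.
\]
Combining these two bounds with the Hamiltonian invariance $\zeta_{\mathtt a}^L(F\circ\phi)=\zeta_{\mathtt a}^L(F)$ would force $1\le 0$, giving the desired contradiction.

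This Hamiltonian invariance along autonomous functions is the step I expect to be the main obstacle. The raw spectral number $c^L(\mathtt a,\cdot)$ is \emph{not} $\mathrm{Ham}(M)$-invariant in general, because a conjugation moves the reference Lagrangian $L$ to $\phi(L)$; naively one only has an identity of the form $c^L(\mathtt a,F\circ\phi)=c^{\phi(L)}(\phi_*\mathtt a,F)$. To bridge this gap I would invoke the Hamiltonian-isotopy invariance of the Lagrangian spectral numbers proved by Leclercq--Zapolsky in \cite{LZ} to identify $c^{\phi(L)}(\phi_*\mathtt a,F)$ with $c^L(\mathtt a,F)$ up to an error controlled by the Hofer length of the conjugating Hamiltonian. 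Replacing $F$ by its $k$-fold concatenation $F^{\natural k}$ does not inflate this error beyond a constant independent of $k$, so dividing by $k$ in the definition
\[
\zeta_{\mathtt a}^L(H)\;=\;\lim_{k\to\infty}\frac{c^L(\mathtt a,H^{\natural k})}{k}
\]
makes the bounded discrepancy vanish in the limit and yields the required equality $\zeta_{\mathtt a}^L(F\circ\phi)=\zeta_{\mathtt a}^L(F)$, completing the proof.
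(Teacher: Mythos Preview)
Your argument is correct and matches the paper's proof: both proceed by contradiction, invoke the conjugation invariance $\zeta_{\mathtt a}^L(F\circ\phi)=\zeta_{\mathtt a}^L(F)$ (the paper's Lemma~\ref{conj invariance}, attributed to \cite{MVZ}), and then use a separating function to contradict the heavy/superheavy inequalities (the paper's Proposition~\ref{hv shv intersect}). Your detour through $c^{\phi(L)}(\phi_*\mathtt a,\cdot)$ and the Leclercq--Zapolsky isotopy comparison works, but the cleaner route is to observe directly that $(F\circ\phi)^{\natural k}$ generates $\tilde\phi^{-1}\tilde\phi_F^{\,k}\tilde\phi$ in $\widetilde{\mathrm{Ham}}(M,\omega)$ and apply the triangle inequality for $c^L(\mathtt a,\cdot)$ to get a $k$-independent error $c^L(\mathtt a,\tilde\phi)+c^L(\mathtt a,\tilde\phi^{-1})$, which vanishes after dividing by $k$.
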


As a corollary of Theorem \ref{product}, (3) of Proposition \ref{torus summary}, Theorem \ref{stem shv} and Theorem \ref{hv from shv}, we obtain the following corollary.
\begin{cor}\label{product with torus}
Let $L$ be a monotone Lagrangian submanifold of a closed symplectic manifold $(M,\omega)$ with $N_L\geq2$ and $QH_\ast(L)\neq0$.
Let $X$, $Y$ be $L$-stems.
Then the subset $X\times L_{\mathrm{l}}^t$ of $M\times T_R^2$ is non-displaceable from $Y\times L_{\mathrm{m}}^s$ for any $s,t$.
\end{cor}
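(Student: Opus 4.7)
The plan is to work in $M \times T_R^2$ with the product Lagrangian $L \times L_{\mathrm{m}}^s$ and to exhibit a common idempotent $\mathtt{a}$ for which $X \times L_{\mathrm{l}}^t$ is $(L \times L_{\mathrm{m}}^s, \mathtt{a})$-heavy while $Y \times L_{\mathrm{m}}^s$ is $(L \times L_{\mathrm{m}}^s, \mathtt{a})$-superheavy; Theorem \ref{hv from shv} then delivers the non-displaceability. First I check that the hypotheses required by the earlier results transfer to $L \times L_{\mathrm{m}}^s$: since $\pi_2(T_R^2, L_{\mathrm{m}}^s) = 0$, the meridian $L_{\mathrm{m}}^s$ is $\lambda$-monotone for every $\lambda > 0$ (and so its monotonicity constant can be matched with that of $L$), $N_{L_{\mathrm{m}}^s} = \infty$, and $QH_\ast(L_{\mathrm{m}}^s) = H_\ast(S^1) \neq 0$; hence $L \times L_{\mathrm{m}}^s$ is monotone with $N_{L \times L_{\mathrm{m}}^s} \geq 2$ and non-vanishing quantum homology.

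For heaviness of $X \times L_{\mathrm{l}}^t$: by Theorem \ref{stem shv} the $L$-stem $X$ is $(L, [L])$-superheavy and hence (by the remark following Definition \ref{definition of heavy}) also $(L, [L])$-heavy. Proposition \ref{torus summary}(3) asserts that $L_{\mathrm{l}}^t$ is $(L_{\mathrm{m}}^s, [L_{\mathrm{m}}^s])$-heavy, and Theorem \ref{product} then yields $(L \times L_{\mathrm{m}}^s,\, [L] \times [L_{\mathrm{m}}^s])$-heaviness of $X \times L_{\mathrm{l}}^t$.

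The superheavy side is the more substantive step: I reduce it to showing that $Y \times L_{\mathrm{m}}^s$ is itself an $L \times L_{\mathrm{m}}^s$-stem, after which Theorem \ref{stem shv} produces $(L \times L_{\mathrm{m}}^s, \mathtt{a})$-superheaviness for every non-trivial idempotent, in particular for $\mathtt{a} = [L] \times [L_{\mathrm{m}}^s]$. Let $\mathbb{A} \subset C^\infty(M)$ with moment map $\Phi_M$ realize $Y = \Phi_M^{-1}(p)$ as an $L$-stem, and let $\mathbb{B} \subset C^\infty(T_R^2)$ be the one-dimensional subspace spanned by a smooth function vanishing precisely on $L_{\mathrm{m}}^s$; every other level set of this generator is disjoint from $L_{\mathrm{m}}^s$ and so trivially displaceable from it, exhibiting $L_{\mathrm{m}}^s$ as an $L_{\mathrm{m}}^s$-stem with respect to $\mathbb{B}$. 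The combined subspace $\mathbb{C} = \pi_M^\ast \mathbb{A} + \pi_{T_R^2}^\ast \mathbb{B} \subset C^\infty(M \times T_R^2)$ is Poisson-commutative, its moment map is $(\Phi_M \circ \pi_M, \Phi_{T_R^2} \circ \pi_{T_R^2})$, and $Y \times L_{\mathrm{m}}^s$ appears as the fiber over $(p, q)$. For any other nonempty fiber $\Phi_M^{-1}(p') \times \Phi_{T_R^2}^{-1}(q')$: if $p' \neq p$, the $L$-stem property of $Y$ provides a Hamiltonian on $M$ displacing $\Phi_M^{-1}(p')$ from $L$ or from itself, whose pullback via $\pi_M$ displaces the product fiber from $L \times L_{\mathrm{m}}^s$ or from itself; if $p' = p$ and $q' \neq q$, then $\Phi_{T_R^2}^{-1}(q')$ is already disjoint from $L_{\mathrm{m}}^s$, so the zero Hamiltonian handles the case.

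Invoking Theorem \ref{hv from shv} with the heavy set $X \times L_{\mathrm{l}}^t$ and the superheavy set $Y \times L_{\mathrm{m}}^s$ sharing the idempotent $[L] \times [L_{\mathrm{m}}^s]$ then yields the claimed non-displaceability. The main obstacle is the product-of-stems verification in the superheavy step, where one must carefully combine the two separate stem structures via the direct sum of their generating Poisson-commutative subspaces and keep track of which case is handled via the $M$-factor and which via the $T_R^2$-factor; everything else is a straightforward application of results already assembled in the excerpt.
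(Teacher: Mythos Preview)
Your proof is correct and follows the route the paper indicates (the paper only gives a one-line citation of Theorem~\ref{product}, Proposition~\ref{torus summary}(3), Theorem~\ref{stem shv}, and Theorem~\ref{hv from shv}). Your explicit verification that $Y\times L_{\mathrm m}^s$ is an $L\times L_{\mathrm m}^s$-stem is precisely the right way to obtain superheaviness here, since Theorem~\ref{product} as stated only yields heaviness of products; this is presumably what the paper has in mind when listing Theorem~\ref{stem shv} among the ingredients.
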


\begin{rem}
Since both of $X\times L_{\mathrm{m}}^s$ and $Y\times L_{\mathrm{l}}^t$ are displaceable by symplectomorphisms which are isotopic to the identity, they are not superheavy in the original sense of Entov and Polterovich.
Thus we cannot apply (iii) of Theorem 1.4 of \cite{EP09} to Corollary \ref{product with torus}.
\end{rem}

Entov and Polterovich proved that any moment map has at least one fiber which is non-displaceable from itself (Theorem 2.1 of \cite{EP06}, Theorem 6.1.8 of \cite{PR}).
We improve their result under an assumption that $M$ admits a Lagrangian submanifold satisfying some conditions.

\begin{cor}\label{at least one fiber}
Let $L$ be a monotone Lagrangian submanifold of a closed symplectic manifold $(M,\omega)$ with $N_L\geq2$ and $QH_\ast(L)\neq0$.
For any moment map $\Phi\colon M\to\mathbb{R}^k$,
there exists a point $y_0$ of $\Phi(L)$ such that
$\Phi^{-1}(y_0)$ is non-displaceable from $L$ and $\Phi^{-1}(y_0)$ itself.
\end{cor}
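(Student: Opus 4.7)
The plan is to split into two cases depending on whether the moment map $\Phi$ produces an $L$-stem, and in each case to combine Theorem \ref{stem shv} with Theorem \ref{hv from shv}. First I would let $\mathbb{A}\subset C^\infty(M)$ be the finite-dimensional Poisson-commutative subspace spanned by the components $\Phi_1,\dots,\Phi_k$, so that $\Phi$ is precisely the moment map of $\mathbb{A}$ in the sense of Definition \ref{L stem}, and fix the non-trivial idempotent $\mathtt{a}=[L]$ of $QH_\ast(L)$, available by the standing assumption $QH_\ast(L)\neq 0$.

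Suppose first that some non-empty fiber $\Phi^{-1}(y_0)$ is an $L$-stem of $\mathbb{A}$. Then Theorem \ref{stem shv} forces $\Phi^{-1}(y_0)$ to be $(L,[L])$-superheavy, hence also $(L,[L])$-heavy, so Theorem \ref{hv from shv} applied with $X=Y=\Phi^{-1}(y_0)$ yields non-displaceability of the fiber from itself. To upgrade this to non-displaceability from $L$, I would invoke that $L$ itself is $(L,[L])$-heavy, which should follow from the basic lower bound $c^L([L],H)\geq\int_0^1\min_L H_t\,dt$ on the Lagrangian spectral invariants of \cite{LZ}, a bound that survives the homogenization defining $\zeta^L_{[L]}$. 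A second application of Theorem \ref{hv from shv}, now with $X=L$ and $Y=\Phi^{-1}(y_0)$, then delivers non-displaceability of $L$ from $\Phi^{-1}(y_0)$. Since any two disjoint closed subsets of $M$ are displaced by the zero Hamiltonian, non-displaceability from $L$ forces $L\cap\Phi^{-1}(y_0)\neq\emptyset$, and in particular $y_0\in\Phi(L)$.

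If on the other hand no fiber of $\Phi$ is an $L$-stem, I would unfold Definition \ref{L stem}: for every $p$ with $\Phi^{-1}(p)\neq\emptyset$ there must exist $q\neq p$ such that $\Phi^{-1}(q)$ is non-empty and is non-displaceable both from $L$ and from itself. Choosing any such $q$ and setting $y_0:=q$ produces a fiber with the required properties, and once more the zero-Hamiltonian observation shows $y_0\in\Phi(L)$.

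The only non-formal ingredient is the $(L,[L])$-heaviness of $L$ itself, which I expect to be the main technical obstacle; it rests on the axiomatic properties of the Lagrangian spectral invariants developed in \cite{LZ}. Once that input is granted, the rest of the argument is a clean bookkeeping combination of Theorems \ref{stem shv} and \ref{hv from shv} together with the elementary observation that disjoint closed subsets are mutually displaceable by the identity.
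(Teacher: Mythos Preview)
Your proof is correct and uses essentially the same ingredients as the paper's, which argues by contradiction: assuming no fiber over $\Phi(L)$ has the desired property, the paper observes (as you do) that fibers over $\Phi(M)\setminus\Phi(L)$ are disjoint from $L$, concludes that \emph{every} fiber is then an $L$-stem, and invokes Theorems~\ref{stem shv} and~\ref{hv from shv} for the contradiction. Your Case~2 is exactly the contrapositive of that reduction, and your Case~1 treats a single $L$-stem fiber directly rather than all of them at once; the difference is organizational rather than mathematical.

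Your concern about the $(L,[L])$-heaviness of $L$ is not an obstacle. The paper notes right after Definition~\ref{L stem} that $L$ is itself an $L$-stem (take any smooth $F$ with $F^{-1}(0)=L$; every other fiber is disjoint from $L$ and hence trivially displaceable from $L$), so Theorem~\ref{stem shv} already makes $L$ $(L,[L])$-superheavy and in particular heavy. Your alternative route via the lower bound $c^L([L],H)\geq\int_0^1\min_L H_t\,dt$ is also valid---this is the Lagrangian control property in \cite{LZ}---but it is not needed here.
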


\begin{proof}
Assume on the contrary that $\Phi^{-1}(y)$ is displaceable from $L$ or $\Phi^{-1}(y)$ itself for any point $y$ of $\Phi(L)$.
Note that, for any point $y$ of $\Phi(M)\setminus\Phi(L)$, $\Phi^{-1}(y)$ is disjoint from $L$, in particular displaceable from $L$ .
Hence, any fiber of $\Phi$ is an $L$-stem and thus, by Theorem \ref{stem shv} and Theorem \ref{hv from shv}, non-displaceable from $L$.
This is a contradiction.
\end{proof}

\begin{rem}
By an argument similar to Subsection 9.2 of \cite{PR}, we can regard Corollary \ref{at least one fiber} as a corollary of Theorem \ref{positive poisson}.
\end{rem}





\subsection{Non-degeneracy of spectral norms and the energy capacity inequality}

In the process of proving (3) of Proposition \ref{torus summary}, we prove the non-degeneracy of spectral norms.

\begin{thm}\label{non-deg of spec norm}
Let $L$ be a monotone Lagrangian submanifold of a closed symplectic manifold $(M,\omega)$ with $N_L\geq2$ and $QH_\ast(L)\neq0$.
Then for any idempotent $\mathtt{a}$ of the quantum homology $QH_\ast(L)$ and  any Hamiltonian function $H\colon S^1 \times M\to\mathbb{R}$ with compact support such that $\phi_H(L)\neq L$,
\[c^L(\mathtt{a},H)+c^L(\mathtt{a},\bar{H})> 0.\]
\end{thm}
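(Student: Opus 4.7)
The plan is to prove the contrapositive: assuming $c^L(\mathtt{a},H)+c^L(\mathtt{a},\bar H)=0$, one deduces $\phi_H(L)=L$. The overall strategy is to bound the Lagrangian spectral norm from below by a Lagrangian spectral displacement energy from $L$, and then to show that this displacement energy is strictly positive.

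First I set up the geometry. Since $L$ is closed and $\phi_H(L)\neq L$, after possibly replacing $H$ by $\bar H$ (using $\overline{\bar H}=H$ and the symmetry of the inequality in $H\leftrightarrow\bar H$) there exists a point $p\in L$ with $q:=\phi_H(p)\notin L$. Continuity of $\phi_H$ and closedness of $L$ then furnish a nonempty open ball $V\subset M$ containing $p$ with $\phi_H(V)\cap L=\emptyset$.

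Next I introduce the Lagrangian spectral displacement energy
\[ e^L(V;\mathtt{a}) := \inf\bigl\{c^L(\mathtt{a},K)+c^L(\mathtt{a},\bar K) : K\in C_c^\infty(S^1\times M),\ \phi_K(V)\cap L=\emptyset\bigr\}, \]
so that $c^L(\mathtt{a},H)+c^L(\mathtt{a},\bar H)\geq e^L(V;\mathtt{a})$ by taking $K=H$ in the infimum. The theorem is thus reduced to showing $e^L(V;\mathtt{a})>0$ for every nonempty open ball $V\subset M$ meeting $L$.

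The crux is this positivity. The main Floer-theoretic input is the rigidity of $c^L$ on Hamiltonians supported in $M\setminus L$: for any such $G$ one has $X_G\equiv 0$ on $L$, so every $G$-chord from $L$ to itself is constant with zero action, and hence the full action spectrum of $G$ relative to $L$ lies in the discrete subgroup $\lambda N_L\mathbb{Z}\subset\mathbb{R}$. A perturbation argument, composing a competing $K$ with a nonnegative autonomous $G$ supported in a compactly contained open subset of $\phi_K(V)$ whose maximum lies just below the disk threshold $\lambda N_L$, together with the triangle inequality coming from the $QH_\ast(M)$-module structure on $QH_\ast(L)$, then extracts a strictly positive contribution from $\max G$ and yields a lower bound on $e^L(V;\mathtt{a})$ depending only on $V$.

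The main obstacle is to make this last extraction sharp, i.e.\ to ensure that the contribution of $G$ to the spectral invariant is not washed out by the generic Hofer--Lipschitz bound. Here the hypothesis $N_L\geq 2$ plays an essential role, since it rules out low-area holomorphic disks that could otherwise collapse the spectral value down to the next element of $\lambda N_L\mathbb{Z}$, thereby preserving the strict inequality that drives the contradiction.
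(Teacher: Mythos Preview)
Your reduction is exactly the paper's: from $\phi_H(L)\neq L$ one finds an open set $V$ meeting $L$ with $\phi_H(V)\cap L=\emptyset$, and then Proposition~\ref{Ostrover} (the Lagrangian Ostrover trick) gives
\[
c^L(\mathtt{a},H)+c^L(\mathtt{a},\bar H)\ \geq\ c^L(\mathtt{a},F)
\]
for every $F$ supported in $V$. So the whole theorem reduces to the statement that some such $F$ has $c^L(\mathtt{a},F)>0$ --- this is your $e^L(V;\mathtt{a})>0$, and is the paper's Proposition~\ref{nonzero spec 2}.

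The gap is in your proof of this positivity. Your proposed mechanism does not close: if $G$ is supported in $\phi_K(V)\subset M\setminus L$, then every $G$-chord from $L$ to $L$ is constant with action $0$, and in fact (this is Lemma~\ref{disjoint muryoku 1}) one has $c^L(\mathtt{a},F\natural G)=c^L(\mathtt{a},G\natural F)=c^L(\mathtt{a},F)$ for \emph{any} $F$; in particular $c^L(\mathtt{a},G)=c^L(\mathtt{a},0)$, which carries no positive contribution. The module inequality $c^L(\mathtt{e}\bullet\mathtt{e}',F\natural G)\leq c^L(\mathtt{e}',F)+c(\mathtt{e},G)$ only gives \emph{upper} bounds, so it cannot be used to push a positive $\max G$ into a lower bound for $c^L(\mathtt{a},K)+c^L(\mathtt{a},\bar K)$. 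And because Lagrangian spectral invariants are \emph{not} conjugation-invariant (unlike the Hamiltonian case), you cannot transport a function supported in $V$ to one supported in $\phi_K(V)$ without changing the invariant --- this is precisely the technical obstruction the paper flags in Remark~\ref{technical remark on pb}. Your invocation of $N_L\geq 2$ to ``rule out low-area disks'' is also not an argument: $N_L\geq 2$ is only a standing hypothesis for $QH_\ast(L)$ to be defined and plays no further role in the positivity proof.

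The paper proves $c^L(\mathtt{a},F)>0$ for some $F$ supported near a point of $L$ by an indirect argument through the asymptotic invariant $\zeta_\mathtt{a}^L$: one covers $M$ by three sets $U_P,U_S,U_K$ (a small ball around a point of $V\cap L$, a neighborhood of $L$ minus that ball, and the complement of a neighborhood of $L$), takes a subordinate partition of unity $P+S+K=1$, and uses the Lagrangian partial quasi-additivity and the Poisson bracket inequality (Propositions~\ref{quasi-state basic}, \ref{pb ineq}) together with Lemma~\ref{disjoint muryoku 2} to force $\zeta_\mathtt{a}^L(P+K)=1$ while $\zeta_\mathtt{a}^L(P)=\zeta_\mathtt{a}^L(K)=0$ would follow from $c^L(\mathtt{a},tP)\leq 0$ for all $t$ --- a contradiction. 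The author explicitly remarks (after the statement of Theorem~\ref{non-deg of spec norm}) that direct Schwarz/Oh-type arguments were attempted and failed here, essentially because $QH_\ast(L)$ need not contain a point class. Your sketch falls into the same trap.
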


On the case of the Oh-Schwarz spectral invariants,
the non-degeneracy of spectral invariants is proved by Schwarz \cite{Sch} when $(M,\omega)$ is symplectically ashperical and $H$ is non-degenerate.
After that, Oh \cite{Oh05} generalized it to the case in which $(M,\omega)$ is a general closed symplectic manifold and $H$ is a general Hamiltonian function. 
Frauenfelder and Schlenk \cite{FS} proved it when $(M,\omega)$ is a weakly convex symplectic manifold and $H$ is a general Hamiltonian function. 

Polterovich and Rosen gave another proof of Oh's result using a Poisson bracket inequality
(Proposition 4.6.2 of \cite{PR}).
Similarly to Polterovich and Rosen, we use Poisson bracket inequality to prove Theorem \ref{non-deg of spec norm}.
However, our Poisson bracket inequality is weaker than their one and we have to improve their argument.

\begin{rem}
The author tried to prove the non-degeneracy of spectral norms by an idea similar to Schwarz or Oh or Frauenfelder-Schlenk's one, but it failed because of some difficulties.
Here, we explain one of them.
In Schwarz and Oh's proof, it is important to prove the existence of a (broken) Floer trajectory between two orbits which represent $[M]$ and $[\mathrm{pt}]$, where $[\mathrm{pt}]$ is the homology class representing a point 
(See Subsection 2.3 of \cite{Sch} and Theorem 5.4 of \cite{Oh05}).
In our case, $QH_\ast(L)\equiv H_\ast(L)$ does not hold in general, and thus we cannot take a homology class in $QH_\ast(L)$ representing $[\mathrm{pt}]$ in general.
\end{rem}

As its application, we obtain a relative version of the energy capacity inequality which is announced by Lisi and Rieser (Theorem 4.1 of \cite{LR}) and proved by Humili$\grave{\mathrm{e}}$re, Leclercq and Seyfaddini
for cotangent bundles (Lemma 7 of \cite{HLS}).

Let $(N,\omega)$ be a symplectic manifold and $L$ a closed subset of $N$.

\begin{defi}
An autonomous Hamiltonian function $H\colon N\to\mathbb{R}$ with compact support is \textit{$L$-simple} if

\begin{description}
\item[(1)]
There exists a compact subset $K$ of $N$ such that
\[K\cap L\neq\emptyset\text{ and }H|_{N\setminus K}\equiv 0,\]

\item[(2)]
There exists an open subset $W$ of $N$ and a constant $m(H)$ such that
\[W\cap L\neq\emptyset\text{ and }H|_W\equiv m(H),\]

\item[(3)]
The only critical values of $H$ are $0$ and $m(H)$.

\end{description}
\end{defi}

A Hamiltonian function $H\colon N\to\mathbb{R}$ is \textit{$L$-slow} if $H$ is $L$-simple and all of its Hamiltonian chord of length at most $1$ from $L$ to $L$ are constant
\textit{i.e.}
for any $0<t\leq1$ and any smooth path $z\colon[0,1]\to N$ with
$\dot{z}(t)=X_H^t(z(t))$ and $z(0),z(1)\in L$,
$z(s)=z(0)$ holds  for any $0<s\leq t$.
Then, we define the Lisi-Rieser-type capacity $C_{LR}(N;L)$ by
\[C_{LR}(N;L)=\sup\{m(H); H\colon N\to\mathbb{R} \text{ is $L$-slow}\}.\]


\begin{thm}\label{energy capacity ineq}
Let $L$ be a Lagrangian submanifold of a closed symplectic manifold $(M,\omega)$ with $\omega(\pi_2(M,L))=0$, $N_L\geq2$ and $QH_\ast(L)\neq0$
(Note that $\omega(\pi_2(M,L))=0$ implies monotonicity of $L$).
Then, for an open subset $U$ of $M$ with $L\cap U\neq\emptyset$,
\[C_{LR}(U;L\cap U)\leq \bar{E}(U;L).\]

\end{thm}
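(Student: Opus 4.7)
The plan is, for each $L$-slow Hamiltonian $H$ on $U$ and each Hamiltonian $F$ displacing $U$ from either $U$ or $L$, to deduce $m(H) \leq \|F\|$ by sandwiching the quantity $c^L([L],H) + c^L([L],\bar H)$ between these two numbers. Throughout, I would extend $H$ by zero to a compactly supported Hamiltonian on $M$; the $L$-slow condition is preserved, because any nonconstant length-$\leq 1$ chord from $L$ to $L$ of the extension would, by autonomy and energy conservation, restrict to a nonconstant chord of $H|_U$ between points of $L\cap U$, contradicting the hypothesis.

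\textbf{Step 1 (spectral lower bound).} The key claim is that for every $L$-slow $H$,
\[
c^L([L],H) + c^L([L],\bar H) \;\geq\; m(H).
\]
Autonomy together with the $L$-slow hypothesis forces every length-$\leq 1$ $L$-chord of $H$ to be constant and to lie either in $\{H=0\}\cap L$ or in $W\cap L$ where $H\equiv m(H)$. Because $\omega(\pi_2(M,L))=0$, there are no additional capping-disc contributions, so the Lagrangian action spectrum of $H$ relative to $L$ is contained in $\{0,-m(H)\}$ (up to sign convention). Computing $c^L([L],H)$ and $c^L([L],\bar H)$ from their min-max definition, the two spectral invariants pick up the two extreme action values, whose gap is $m(H)$, giving the inequality.

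\textbf{Step 2 (displacement upper bound).} Let $F$ displace $U$ from $Y$ with $Y\in\{U,L\}$. When $Y=L$, the Hamiltonian $G:=\bar F\natural H\natural F$, which generates $\phi_F^{-1}\phi_H\phi_F$, has support $\phi_F^{-1}(\operatorname{supp} H)\subset\phi_F^{-1}(U)$, disjoint from $L$ by the displacement hypothesis. Hence $G$ vanishes on $L$, and standard properties of $c^L$ (vanishing on Hamiltonians with support off $L$, together with the Hofer-continuity of $c^L$ under conjugation by $\phi_F$) yield
\[
c^L([L],H) + c^L([L],\bar H) \;\leq\; \|F\|.
\]
When $Y=U$, a commutator variant built from $F$ and $H$, supported in $U\cup\phi_F(U)$ but acting trivially on $L$, produces the same bound.

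\textbf{Step 3 (conclusion).} Combining the two steps gives $m(H)\leq\|F\|$ for every such $F$; infimizing over $F$ yields $m(H)\leq\bar E(U;L)$, and supremizing over $L$-slow $H$ completes the proof.

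The principal obstacle is Step 1: extracting the sharp gap $m(H)$, rather than $m(H)/2$, from the spectral invariants requires identifying precisely which extreme action value each of $c^L([L],H)$ and $c^L([L],\bar H)$ realizes. Both the $L$-slow hypothesis (which rules out intermediate chords that would contaminate the spectrum) and Theorem \ref{non-deg of spec norm} (which prevents the spectral norm from collapsing to zero on nontrivial $H$) are essential here. A secondary technical point is the $Y=U$ case of Step 2, where one must manufacture a Hamiltonian supported off $L$ from one merely supported in $U$ via a commutator construction, since energy bounds from abstract (rather than relative) displacement are less standard for Lagrangian spectral invariants.
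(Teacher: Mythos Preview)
Your Step~1 is on the right track and essentially matches the paper's Proposition~\ref{spectral detemined}: with $\omega(\pi_2(M,L))=0$ and $H$ $L$-slow, one has $\mathrm{Spec}(H;L)=\{0,m(H)\}$ and $\mathrm{Spec}(\bar H;L)=\{0,-m(H)\}$ (your sign is off in the paper's convention, but that is minor), and Theorem~\ref{non-deg of spec norm} together with spectrality pins down the values exactly: $c^L([L],H)=m(H)$ and $c^L([L],\bar H)=0$. Note that this gives more than the inequality you state---it gives the \emph{equality} $c^L([L],H)=m(H)$, and this sharper statement is what is actually needed.

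Step~2 is where the genuine gap lies. Your conjugation argument does not work: Lagrangian spectral invariants are \emph{not} invariant (nor Hofer-controlled) under conjugation by Hamiltonian diffeomorphisms---this is precisely the technical difficulty the paper flags in its Technical Remark and in Remark~\ref{technical remark on pb}. Moreover, your claim that $G=\bar F\natural H\natural F$ has support in $\phi_F^{-1}(U)$ disjoint from $L$ is incorrect: the hypothesis $\phi_F(U)\cap L=\emptyset$ means $U\cap\phi_F^{-1}(L)=\emptyset$, not $\phi_F^{-1}(U)\cap L=\emptyset$, and the time-dependent support of $G$ is not even contained in $\phi_F^{-1}(U)$. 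Even if one repairs this (e.g.\ by pushing forward instead of pulling back) and invokes the correct tool---the Lagrangian Ostrover trick, Proposition~\ref{Ostrover}---what one obtains is a bound on $c^L([L],H)$ \emph{alone}, namely $c^L([L],H)\le c^L([L],F)+c^L([L],\bar F)\le\|F\|$. Applying this to both $H$ and $\bar H$ and summing gives only $c^L([L],H)+c^L([L],\bar H)\le 2\|F\|$, which would yield $C_{LR}\le 2\bar E$, off by a factor of~$2$.

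The fix, and the paper's actual argument, is to abandon the sandwich on the sum and instead combine the exact equality $c^L([L],H)=m(H)$ from Step~1 with the single bound $c^L([L],H)\le\bar E(U;L)$ from Corollary~\ref{Hofer Ostrover} (which handles both the $Y=L$ case via Proposition~\ref{Ostrover} and the $Y=U$ case via Proposition~\ref{abstract disp bounded spec ham 1}, the latter going through the quantum module structure and the Hamiltonian spectral invariant rather than any commutator construction). This immediately gives $m(H)\le\bar E(U;L)$ with no factor lost.
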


\begin{rem}
Our definition of simplicity follows \cite{FGS} and it is slightly different from Lisi and Rieser's original one.
In their original definition, ``The only critical values of $H$ are $0$ and $m(H)$'' is replaced by ``$0\leq H(x)\leq m(H)$ for all $x\in M$''.
Note that Theorem \ref{energy capacity ineq} does not imply Lisi and Rieser's original energy capacity inequality.
\end{rem}

\subsection{Application to Poisson bracket invariant}

Let $Q_N$ denote the cube $[0,1]^N$.
For a partition of unity $\vec{F}=\{F_1,\ldots,F_N\}$ on a symplectic manifold $(M,\omega)$, we define the magnitude $\kappa_{cl}(\vec{F})$ of its Poisson noncommutativity by
\[\kappa_{cl}(\vec{F})=\max_{x,y\in Q_N}||\{\sum_ix_iF_i,\sum_jy_jF_j\}||.\]
For an open cover $\mathcal{U}$ of $M$, we define the Poisson bracket invariant $\mathrm{pb}(\mathcal{U})=\inf \kappa_{cl}(\vec{F})$.
Here the infimum is taken over all partition $\vec{F}$ of the unity subordinated to $\mathcal{U}$.

For an open cover $\mathcal{U}=\{U_1,\ldots,U_N\}$ of $M$, we define the displacement energy $\bar{E}(\mathcal{U};L)$ of $\mathcal{U}$ by $\bar{E}(\mathcal{U};L)=\max_i\bar{E}(U_i;L)$.

The following theorem is a relative version of Theorem 3.1 of \cite{P12} and Theorem 9.2.2 of \cite{PR} (see also Theorem 1.8 of \cite{EPZ}).

\begin{thm}\label{positive poisson}
Let $L$ be a monotone Lagrangian submanifold of a $2m$-dimensional closed symplectic manifold $(M,\omega)$ with $N_L\geq2$ and $QH_\ast(L)\neq0$.
Let $\mathcal{U}=\{U_1,\ldots,U_N\}$ be an open cover such that each $U_i$ is displaceable from $L$ or $U_i$ itself.
Then
\[\mathrm{pb}(\mathcal{U})\cdot \bar{E}(\mathcal{U};L)\geq (2N^2)^{-1}.\]
In particular, $\bar{E}(\mathcal{U};L)>0$ and
\[\mathrm{pb}(\mathcal{U})\geq (2N^2\cdot\bar{E}(\mathcal{U};L))^{-1}>0.\]
\end{thm}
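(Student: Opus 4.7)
My plan is to follow the strategy of the proof of Theorem 9.2.2 of \cite{PR} (equivalently Theorem 3.1 of \cite{P12}), but with the Hamiltonian-Floer partial quasi-state replaced by $\zeta := \zeta_\mathtt{a}^L$ for a fixed non-trivial idempotent $\mathtt{a}\in QH_\ast(L)$. The unit $[L]$ works: by Theorem \ref{stem shv}, $L$ itself (being trivially an $L$-stem) is $(L,[L])$-superheavy, which is what makes the argument close. The two technical ingredients will be a Hofer-Lipschitz/displacement estimate and an approximate additivity coming from a Poisson bracket inequality for $\zeta$.

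Fix $\varepsilon>0$, choose a partition of unity $\vec{F}=(F_1,\ldots,F_N)$ subordinate to $\mathcal{U}$ with $\kappa_{cl}(\vec{F})\leq\mathrm{pb}(\mathcal{U})+\varepsilon$, and for each $i$ a Hamiltonian $H_i$ displacing $U_i$ from $L$ or from $U_i$ itself with $\|H_i\|\leq\bar{E}(U_i;L)+\varepsilon$. Write $\kappa:=\mathrm{pb}(\mathcal{U})$ and $e:=\bar{E}(\mathcal{U};L)$. The first substantive step is a displacement estimate of the form $|\zeta(F)|\leq\bar{E}(U_i;L)\cdot\max|F|$ (up to an absolute constant) for any autonomous $F$ with support in $U_i$. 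This is proved by comparing $F$ and $F\circ(\phi_{H_i})^{-1}$ via the Hofer Lipschitz property of $c^L(\mathtt{a},\cdot)$ and using the $(L,[L])$-superheaviness of $L$ (via Theorem \ref{stem shv}) to kill the second term, since the displaced support meets $L$ trivially.

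For the main body, consider the continuous map $\rho\colon Q_N\to\mathbb{R}$ defined by $\rho(s)=\zeta\bigl(\sum_i s_iF_i\bigr)$. Since $\sum_iF_i\equiv 1$, one has $\rho(0)=0$ and $\rho(1,\ldots,1)=\zeta(1)=1$, a unit-size jump that has to be accounted for. Using the (weaker than classical) Poisson bracket inequality $|\zeta(G+G')-\zeta(G)-\zeta(G')|\leq C\sqrt{\|\{G,G'\}\|}$ that is available for the Lagrangian spectral invariants, one chains the increments of $\rho$ along an $N$-step axis-parallel staircase path from $0$ to $(1,\ldots,1)$. Each segment changes only one coordinate $s_i$ and its contribution is bounded by the displacement estimate of the previous paragraph; summing the $N$ displacement contributions together with the $O(N^2)$ Poisson-bracket correction terms gives an inequality of the form $1\leq N\cdot e\cdot\lambda+N^2\cdot C\sqrt{\kappa/\lambda}$ after the standard rescaling $\vec{F}\mapsto\lambda\vec{F}$. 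Optimizing in $\lambda>0$ and then sending $\varepsilon\to 0$ yields $e\cdot\kappa\geq(2N^2)^{-1}$.

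The main obstacle, as flagged in the remark following Theorem \ref{non-deg of spec norm}, is that the Poisson bracket inequality available for $c^L(\mathtt{a},\cdot)$ is genuinely weaker than the one used in \cite{P12} and \cite{PR}; in particular the sharper estimate used in some incarnations of the Polterovich-Rosen argument is not at one's disposal. The chaining must therefore be organized so as to invoke only the weaker $\sqrt{\,\cdot\,}$-form PB bound together with the displacement estimate, and the constant $(2N^2)^{-1}$ is forced by the resulting balance between the $N$ single-coordinate displacement terms and the $O(N^2)$ Poisson bracket correction terms.
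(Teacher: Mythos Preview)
Your plan has a genuine gap, and it is precisely the one the paper warns about in the paragraph following the statement of Theorem \ref{positive poisson} and in Remark \ref{technical remark on pb}. The only Poisson bracket inequality available for $\zeta_\mathtt{a}^L$ is Proposition \ref{pb ineq},
\[
|\zeta_\mathtt{a}^L(F+G)-\zeta_\mathtt{a}^L(F)-\zeta_\mathtt{a}^L(G)|\leq\bigl(2D(F,G)\,\|\{F,G\}\|\bigr)^{1/2},
\]
and the constant $D(F,G)=\min\{\sup_t q_{\mathtt{a},\tilde\phi_F}(\tilde\phi_{tG}),\sup_t q_{\mathtt{a},\tilde\phi_G}(\tilde\phi_{tF})\}$ is \emph{not} a priori finite, let alone bounded in terms of $\bar E(\mathcal U;L)$. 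In the Hamiltonian case one has $q_{\mathtt{a},\tilde f}(\tilde g)=q_{\mathtt{a}}(\tilde g)$ by conjugation invariance of the Oh--Schwarz invariants, and then $D$ is controlled by the displacement energy of the support of one of the factors. For Lagrangian spectral invariants conjugation invariance fails; when you try to bound $q_{\mathtt{a},\tilde\phi_{G_{k-1}}}(\tilde\phi_{tF_k})$ you need to control $c^L(\mathtt{a},tF_k\circ\phi_{G_{k-1}}^l)$ uniformly in $l\in\mathbb Z$, but $F_k\circ\phi_{G_{k-1}}^l$ is supported in $\phi_{G_{k-1}}^{-l}(U_k)$, whose displacement energy from $L$ is not controlled as $|l|\to\infty$. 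So your chaining step does not close. (Your ``displacement estimate'' $|\zeta(F)|\leq\bar E(U_i;L)\cdot\max|F|$ is also off: for $F$ supported in a relatively or abstractly displaceable $U_i$ one has $\zeta(F)=0$ by Proposition \ref{vanishing property}, so the entire weight of the argument falls on the PB correction terms, which is exactly where the uncontrolled $D(F,G)$ bites.)

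The paper therefore avoids $\zeta$ altogether and works directly with $c^L([L],\cdot)$. The two replacements are: (i) instead of the PB inequality for $\zeta$, Lemma \ref{sum composition deiff} gives the clean estimate $|c^L([L],F+G)-c^L([L],F\natural G)|\leq\tfrac12\|\{F,G\}\|$, with no $D$-term; (ii) instead of the semi-homogeneity of $\zeta$, the Hamiltonian shift property (Proposition \ref{LZ}(4)) gives $c^L([L],R\cdot 1)-c^L([L],0)=R$. With $G_k=\sum_{i\le k}F_i$ one telescopes $R=\sum_k\bigl(c^L([L],RG_k)-c^L([L],RG_{k-1})\bigr)$, compares $RG_k=RG_{k-1}+RF_k$ to $(RG_{k-1})\natural(RF_k)$ via Lemma \ref{sum composition deiff} (cost $\tfrac{R^2}{2}\kappa_{cl}(\vec F)$ per step), and bounds $c^L([L],(RG_{k-1})\natural(RF_k))-c^L([L],RG_{k-1})$ by the triangle inequality together with Corollary \ref{Hofer Ostrover} (cost $\bar E(\mathcal U;L)$, independent of $R$). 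This yields $R\le N\bigl(\tfrac{R^2}{2}\kappa_{cl}(\vec F)+\bar E(\mathcal U;L)\bigr)$, and optimizing in $R$ gives $(2N^2)^{-1}$.
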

Polterovich used the Poisson bracket inequality efficiently to prove the original (abstract) version of Theorem \ref{positive poisson}, but our Poisson bracket inequality (Proposition \ref{pb ineq}) is too complicated to prove Theorem \ref{positive poisson}.
For this difficulty, we avoid to use a partial symplectic quasi-state.
Instead of this, we use a Lagrangian spectral invariant directly.
The reason why Polterovich used a partial quasi-state is that it has the semi-homogeneity property.
Instead of the semi-homogeneity property of a partial symplectic quasi-state, we use the Hamiltonian shift property of a Lagrangian spectral invariant.

We provide an application of Theorem \ref{positive poisson}.
For an open cover $\mathcal{U}=\{U_i\}_i$ of a manifold $M$, let $\mathcal{U}\times\mathcal{U}$ denote the open cover $\{U_i\times U_j\}_{i,j}$ of $M\times M$

\begin{exam}\label{torus cov}
Let $\{U_1,\ldots,U_M,V_1,\ldots, V_N\}$, $\{W_1,W_2\}$ be open cover of a circle $S^1=\mathbb{R}/\mathbb{Z}$ such that
\begin{itemize}
\item
$U_i\neq S^1$, $V_j\neq  S^1$ and $W_k\neq  S^1$ for any $i$, $j$ and $k$,
\item
$0\in U_i$ for any $i=1.\ldots,M$ and $0\notin V_j$ for any $j=1,\ldots,N$.
\end{itemize}
We define a cover $\mathcal{U}$ of $T_1^2=S^1\times S^1$ by 
\[\mathcal{U}=\{U_i\times W_k,V_j\times S^1\}_{i,j,k}.\]
Then, any element of $\mathcal{U}$ is displaceable from $\{0\}\times S^1$.
Thus, by Theorem \ref{positive poisson}, $\mathrm{pb}(\mathcal{U})$ is positive.
Similarly, we can prove that $\mathrm{pb}(\mathcal{U}\times\mathcal{U})$ is also positive.
On the other hand, we can easily confirm that $\mathrm{pb}(\mathcal{U}^\prime)=0$ where 
\[\mathcal{U^\prime}=\{U_i\times S^1,V_j\times S^1\}_{i,j}.\]
\end{exam}

\begin{rem}
Kaoru Ono pointed out that we can prove positivity of $\mathrm{pb}(\mathcal{U})>0$ by the original Polterovich's theorem  when $\{U_1,\ldots,U_M\}$ is not an open cover of a circle $S^1=\mathbb{R}/\mathbb{Z}$.
\end{rem}

We note that the assumption $QH_\ast(L)\neq0$ in Theorem \ref{positive poisson} is essential.
$QH_\ast(L)\neq0$ implies non-displaceability of $L$.
We provide the following proposition if $L$ is displaceable.

\begin{prop}\label{disp pb inv}
Let $X$ be a closed subset of a closed symplectic manifold $(M,\omega)$.
Assume that $X$ is displaceable from $X$ itself.
Then, there exists an open cover $\mathcal{U}=\{U_1,\ldots,U_N\}$ of $M$ such that any $U_i$ is displaceable from $X$ and $\mathrm{pb}(\mathcal{U})=0$.

\end{prop}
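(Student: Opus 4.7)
The plan rests on the observation that the Poisson bracket invariant of a \emph{two}-element open cover is automatically zero. Indeed, if $\mathcal{U}=\{U_1,U_2\}$ covers $M$ and $\{F_1,F_2\}$ is any subordinate partition of unity, then $F_2 = 1 - F_1$ on $M$, so
\[\{F_1,F_2\} = \{F_1, 1-F_1\} = 0,\]
and consequently for every $x,y \in Q_2 = [0,1]^2$,
\[\{x_1 F_1 + x_2 F_2,\, y_1 F_1 + y_2 F_2\} = (x_1 y_2 - x_2 y_1)\{F_1,F_2\} = 0.\]
Hence $\kappa_{cl}(\vec F) = 0$ and $\mathrm{pb}(\mathcal{U}) = 0$. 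The work therefore reduces to exhibiting an open cover of $M$ by just two sets, each displaceable from $X$.

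To construct such a cover, I would start with a Hamiltonian $H\colon S^1 \times M \to \mathbb{R}$ realizing the displacement, so that $\phi_H(X) \cap X = \emptyset$. Since $X$ is closed in the closed manifold $M$ it is compact, and by continuity of $\phi_H$ together with compactness of $X$ there is an open neighborhood $V$ of $X$ satisfying $\phi_H(\bar V) \cap X = \emptyset$. Pick a smaller open neighborhood $V'$ of $X$ with $\bar{V'} \subset V$, and set
\[U_1 := V, \qquad U_2 := M \setminus \bar{V'}.\]
Then $U_1 \cup U_2 = M$; the set $U_1$ is displaceable from $X$ via $H$ (using that $\bar X = X$); and $U_2$ is disjoint from $X$ since $X \subset V' \subset \bar{V'}$, so $U_2$ is displaceable from $X$ by the zero Hamiltonian.

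Combining the two steps, $\mathcal{U} = \{U_1,U_2\}$ is an open cover of $M$ whose members are each displaceable from $X$, and $\mathrm{pb}(\mathcal{U}) = 0$ by the first paragraph. There is no serious obstacle in this argument; the substantive point is simply that Polterovich-type lower bounds on $\mathrm{pb}$ (as in Theorem \ref{positive poisson}) require covers of cardinality at least three in an essential way, whereas cardinality-two covers can never obstruct commutativity of a subordinate partition of unity.
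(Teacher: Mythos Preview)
Your proof is correct and in fact slightly simpler than the paper's. The paper constructs a \emph{three}-element cover: it chooses a smooth function $F$ with $F^{-1}(0)=X$, picks $\epsilon>0$ small enough that $F^{-1}((-2\epsilon,2\epsilon))$ is still displaceable from $X$, and sets $U_1=F^{-1}((-\infty,-\epsilon))$, $U_2=F^{-1}((-2\epsilon,2\epsilon))$, $U_3=F^{-1}((\epsilon,+\infty))$; since all three sets are sublevel/superlevel sets of a single function $F$, one may take a subordinate partition of unity consisting of functions of $F$, which Poisson-commute, yielding $\mathrm{pb}(\mathcal{U})=0$. Your approach bypasses this by observing that for a \emph{two}-element cover any partition of unity is of the form $\{F_1,1-F_1\}$, so $\mathrm{pb}=0$ automatically; the cover itself comes from elementary point-set topology (a displaceable neighborhood of $X$ and the complement of a slightly smaller one). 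What your route buys is economy: no auxiliary smooth function with prescribed zero set is needed, and the vanishing of $\mathrm{pb}$ is immediate rather than relying on the ``functions of a common $F$ commute'' trick. The paper's route, on the other hand, illustrates the more broadly useful principle that covers by sublevel sets of a single function always have $\mathrm{pb}=0$.
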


\subsection{Hamiltonian chords between two disjoint subsets}



For subsets $Y_0, Y_1$ of an open symplectic manifold $(N,\omega)$.
For a Hamiltonian function $H\colon S^1\times N\to\mathbb{R}$ with compact support and a homotopy class $\alpha\in\pi_1(M,Y_0\cup Y_1)$,
We define a set $\mathsf{Ch}(H;Y_0,Y_1,\alpha)$ of Hamiltonian chords of $H$ from $Y_0$ to $Y_1$ in $\alpha$ by
\[\mathsf{Ch}(H;Y_0,Y_1,\alpha)=\{ z\colon 
[0,1]\to N ;  \dot{z}(t)=X_H^t(z(t)), z(0)\in Y_0 \text{ and } z(1)\in Y_1, [z]=\alpha\}.\]

For subsets $Y_0,Y_1$ and $Z$ of an open symplectic manifold  $(N,\omega)$ and a homotopy class $\alpha$ of $\pi_1(N,Y_0\cup Y_1)$,
we define the relative symplectic capacity $C_{BEP}(N,Y_0,Y_1,Z,\alpha)$ by
\[C_{BEP}(N,Y_0,Y_1,Z,\alpha)=\inf \{K>0 ; \forall H\in \mathcal{H}_K(N,Z), \mathsf{Ch}(H;Y_0,Y_1,\alpha)\neq \emptyset \},\]
where 
\[\mathcal{H}_K(N,Z)=\{ H\in C_c^\infty(S^1\times N) ; \inf_{S^1\times Z} H \geq K \}.\]

\begin{rem}
Buhovsky, Entov and Polterovich studied existence problem of Hamiltonian chords between two disjoint subsets under some robust restriction on the $C^0$-profile of the Hamiltonian function (\cite{BuEP}).
They made a method to find a Hamiltonian chord whose length is equal to or smaller than 1.
On the other hand, an upper bound of $C_{BEP}$ provides  a Hamiltonian chord whose length is equal to 1.
\end{rem}

For $R=(R_1,\ldots,R_n)\in(\mathbb{R}_{>0})^n$, let $\mathbb{A}(R)$ denote an annulus $(0,R_1)\times(0,R_2)\times(0,R_n)\times(\mathbb{R}/\mathbb{Z})\times\cdots\times(\mathbb{R}/\mathbb{Z})$ with the coordinates $(p_1,\ldots,p_n,q_1,\ldots,q_n)$.
We fix a symplectic form $dp_1\wedge dq_1+\cdots+dp_n\wedge dq_n$ on $\mathbb{A}(R)$.
For $s=(s_1,\ldots,s_n)\in(\mathbb{R}/\mathbb{Z})^n$ for any $i$, define $\mathbb{A}(R)_s$ by
\[\mathbb{A}(R)_s=\{(p,q)\in\mathbb{A}(R); q=s\}.\]

For $R=(R_1,\ldots,R_n)\in(\mathbb{R}_{>0})^n$, $s=(s_1,\ldots,s_n)\in\mathbb{R}^n$ and subsets $Y,Z$ of $M$, we define a relative symplectic capacity $C(M,Y,Z,R,s)$ by
\[C(M,Y,Z,R,s)=C_{BEP}(M\times \mathbb{A}(R),Y\times \mathbb{A}(R)_0,Y\times \mathbb{A}(R)_s,Z\times \mathbb{A}(0),\alpha_s).\]
Here $\alpha_s$ is the homotopy class of $\pi_1(M\times \mathbb{A}(R),Y\times \mathbb{A}(R)_0\cup Y\times \mathbb{A}(R)_s)$ represented by the path $\gamma_s\colon [0,1]\to M\times\mathbb{A}(R)$ defined by $t\mapsto(p_0,([s_1t],\ldots,[s_nt]),(0,\ldots,0))$ where $p_0$ is the fixed base point of $Y$.

\begin{thm}\label{main monotone just}
Let $L$ be a monotone Lagrangian submanifold of a $2m$-dimensional closed symplectic manifold $(M,\omega)$ with $N_L\geq2$ and $QH_\ast(L)\neq0$.
Let $Z$ be an $(L,\mathtt{a})$-heavy subset of $M$ for some idempotent $\mathtt{a}$.
Then for any $R=(R_1,\ldots,R_n)\in(\mathbb{R}_{>0})^n$ and any $s=(s_1,\ldots,s_n)\in\mathbb{R}^n$,
\[C(M,L,Z,R,s)\leq2\sum_{i=1}^nR_i\cdot|s_i|+\lambda(m+n).\]
\end{thm}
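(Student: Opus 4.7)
The plan is to prove the inequality by contradiction. Fix $K>2\sum_iR_i|s_i|+\lambda(m+n)$ and an $H\in\mathcal{H}_K(M\times\mathbb{A}(R),Z\times\mathbb{A}(R)_0)$, and suppose for contradiction that $\mathsf{Ch}(H;L\times\mathbb{A}(R)_0,L\times\mathbb{A}(R)_s,\alpha_s)=\emptyset$. The strategy is to compactify the annulus to a product of tori, use Theorem~\ref{product} together with Proposition~\ref{torus summary}(3) to produce a closed heavy subset of the compactification, and then derive conflicting bounds on a Lagrangian spectral invariant from heaviness on one side and chord non-existence on the other.

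Since $H$ has compact support, pick $R'_i>R_i$ with $R'_i-R_i$ arbitrarily small and embed $\mathbb{A}(R)$ into $T^2_{R'_i}$ in the standard way, so that $M\times\mathbb{A}(R)\hookrightarrow\bar M:=M\times\prod_iT^2_{R'_i}$; extend $H$ by zero to a compactly supported Hamiltonian $\bar H$ on $S^1\times\bar M$. Choose meridians $L_{\mathrm{m}}^{s^*_i}\subset T^2_{R'_i}$ with $s^*_i\in(R_i,R'_i)$, so each meridian is disjoint from the embedded annulus, and set $\bar L:=L\times\prod_iL_{\mathrm{m}}^{s^*_i}$. Because $\pi_2(T^2,L_{\mathrm{m}}^{s^*_i})=0$ each meridian is $\lambda$-monotone, hence $\bar L$ is a closed $\lambda$-monotone Lagrangian with $N_{\bar L}=N_L\geq 2$ and nontrivial $QH_\ast$. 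By Proposition~\ref{torus summary}(3) and repeated application of Theorem~\ref{product}, the set $Z':=Z\times\prod_iL_{\mathrm{l}}^{0}$ is $(\bar L,\bar{\mathtt{a}})$-heavy with $\bar{\mathtt{a}}:=\mathtt{a}\otimes[L_{\mathrm{m}}^{s^*_1}]\otimes\cdots\otimes[L_{\mathrm{m}}^{s^*_n}]$. To replace the shifted target by a fixed Lagrangian I introduce a time-independent shift Hamiltonian $S(x,p,q)=\sum_is_i\chi_i(p_i)$, where $\chi_i\colon\mathbb{R}/R'_i\mathbb{Z}\to\mathbb{R}$ is smooth, equals $p_i$ on $[0,R_i]$, and closes up smoothly in the compensation region; then $\phi_S^1$ translates $q$ by $s_i$ on the annulus and, by shrinking the cutoff region, $\|S\|_{\mathrm{Hofer}}\leq\sum_i|s_i|R_i+\varepsilon$. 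A direct computation shows that chords of the twisted Hamiltonian $\bar S\natural\bar H$ from $L\times\prod_iL_{\mathrm{l}}^{0}$ to itself in the zero homotopy class and supported in the annulus are in bijection with elements of $\mathsf{Ch}(H;L\times\mathbb{A}(R)_0,L\times\mathbb{A}(R)_s,\alpha_s)$, so by hypothesis none exist.

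With this setup, two contradictory bounds on $c^{\bar L}(\bar{\mathtt{a}},\cdot)$ emerge. For the lower bound, modify $\bar H$ to $\bar H'$ by adding a small bump supported in the compensation region of the torus so that $\inf_{Z'}\bar H'\geq K-\varepsilon$ (this step is needed because extending by zero leaves $\inf_{Z'}\bar H=0$); then the heaviness of $Z'$ combined with the sub-additivity $c^{\bar L}(\bar{\mathtt{a}},F^{\natural k})\leq k\,c^{\bar L}(\bar{\mathtt{a}},F)$ for the idempotent $\bar{\mathtt{a}}$ (divide by $k$ and let $k\to\infty$) yields $c^{\bar L}(\bar{\mathtt{a}},\bar H')\geq\zeta^{\bar L}_{\bar{\mathtt{a}}}(\bar H')\geq K-\varepsilon$. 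For the upper bound, the non-existence of the relevant chords of $\bar S\natural\bar H$, combined with the Hofer/Lipschitz estimate $|c^{\bar L}(\bar{\mathtt{a}},F)-c^{\bar L}(\bar{\mathtt{a}},G)|\leq\|F-G\|$, the Hamiltonian shift property, and a spectral-norm analysis in the spirit of the proof of Theorem~\ref{non-deg of spec norm}, produces $c^{\bar L}(\bar{\mathtt{a}},\bar H')\leq 2\|S\|+\lambda(m+n)+O(\varepsilon)\leq 2\sum_iR_i|s_i|+\lambda(m+n)+O(\varepsilon)$; the factor $2$ comes from estimating the twist symmetrically (bounding $\bar S\natural\bar H'$ against $\bar H'$ by $\|S\|$ on each side), and $\lambda(m+n)$ is the Maslov/period correction governed by the monotonicity constant and the dimension $m+n$ of $\bar L$. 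Sending $\varepsilon\to 0$ contradicts the choice of $K$.

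The hardest step is the upper bound: the Lagrangian $L\times\prod_iL_{\mathrm{l}}^{0}$ that naturally detects the absent chords is not the Lagrangian $\bar L$ used in the spectral invariant, so one must either invoke both Lagrangian Floer complexes simultaneously or transfer between them via a module-type argument in the spirit of Corollary~\ref{module find 1}. Keeping track of the ambiguity modulo $\lambda N_{\bar L}\mathbb{Z}$ and arranging that the accumulated Maslov contributions collapse precisely to the clean term $\lambda(m+n)$ is the most delicate Floer-theoretic bookkeeping in the argument, but once the correspondence of chords and the heaviness-driven lower bound are in place, the remaining work is essentially an accounting exercise rather than a conceptual obstruction.
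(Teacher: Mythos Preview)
Your overall architecture---compactify the annulus into a product of tori, derive a lower bound on a Lagrangian spectral invariant from heaviness of $Z$ times a torus factor, derive an upper bound from the absence of chords, and reach a contradiction---matches the paper's strategy. However, the crucial upper-bound step in your proposal is not actually carried out, and your particular choice of Lagrangian makes it genuinely problematic.

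You take $\bar L = L\times\prod_i L_{\mathrm m}^{s^*_i}$ with the meridians placed in the compensation region, so $\bar L$ is \emph{disjoint} from the image of the annulus. Consequently $\bar H$ vanishes near $\bar L$, all its $\bar L$-chords are constant, and the hypothesis $\mathsf{Ch}(H;L\times\mathbb A(R)_0,L\times\mathbb A(R)_s,\alpha_s)=\emptyset$ says nothing about the Floer complex $CF_\ast(\bar H;\bar L)$. You attempt to bridge this with the shift $S$, but the bijection you describe concerns chords starting and ending on $L\times\prod_i L_{\mathrm l}^0$, not on $\bar L$; you then invoke ``a spectral-norm analysis in the spirit of the proof of Theorem~\ref{non-deg of spec norm}'' and a vague ``module-type argument'' to transfer between the two Lagrangians. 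Neither of these is a concrete step, and you yourself flag the mismatch as the hardest part without resolving it. In addition, the bump you add to form $\bar H'$ must be of size $K$ (not small), so it alters the chord structure in an uncontrolled way.

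The paper sidesteps this entirely by taking the Lagrangian to be $L\times(T^n_R\times\{0\})$, the compactification of $L\times\mathbb A(R)_0$ itself. Then the chord hypothesis directly constrains the relevant Floer complex, and the upper bound is obtained by an explicit construction: an auxiliary time-independent Hamiltonian $H$ (built from one-variable functions $H^{R_i,\epsilon,s_i}$ on each torus factor plus a small Morse function on $L$) whose time-$1$ map realizes the translation by $s$, has $\|H\|_{L^\infty}<\sum_i R_i|s_i|$, and whose $\bar L$-chords are exactly the critical points with Morse index equal to Conley--Zehnder index. Concatenating this $H$ with $F$ and using monotonicity to control the Maslov contribution yields $c^{\bar L}([\bar L],F)\le 2\|H\|_{L^\infty}+\lambda(m+n)$ with the precise constant. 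This Irie--Seyfaddini type construction is the substantive content you are missing; the phrase ``spectral-norm analysis'' does not substitute for it.
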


In \cite{Ka}, the author used the fact ``the meridian curve is heavy in the sense of Entov and Polterovich" (\cite{EP09}).
We use (3) of Proposition \ref{torus summary} instead of that fact.

In contrast, we prove the following proposition when $Z$ is displaceable from $L$ (recall that $(L,\mathtt{a})$-heavy subset is non-displaceable from $L$).
\begin{prop}\label{displaceable main}
Let $Z$ be a displaceable compact of a closed symplectic manifold $(M,\omega)$.
Assume $R_k\cdot s_k>E(Z;L)$ for some $k$, where $E(Z;L)$ is the displacement energy.
Then
\[C(M,L,Z,R,s)=+\infty.\]
\end{prop}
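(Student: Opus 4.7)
The plan is, for arbitrary $K > 0$, to construct a Hamiltonian $H \in \mathcal{H}_K(M \times \mathbb{A}(R), Z \times \mathbb{A}(0))$ admitting no chord in class $\alpha_s$ from $L \times \mathbb{A}(R)_0$ to $L \times \mathbb{A}(R)_s$; since $K$ is arbitrary this forces $C(M, L, Z, R, s) = +\infty$. Without loss of generality assume $s_k > 0$. Using the symmetry $E(Z;L) = E(L;Z)$ and the hypothesis $R_k s_k > E(Z;L)$, I would first fix $F \in C_c^\infty(S^1 \times M)$ with $\phi_F^1(L) \cap Z = \emptyset$ and $\|F\| < R_k s_k$.

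Next I would use a time-reparametrization trick: choose a smooth cut-off $a \colon [0,1] \to [0,1]$ with $a(0) = a(1) = 0$ and $a \equiv 1$ on a long central interval, and set $F_0(t,x) := \dot a(t) F(a(t), x)$. Its $M$-flow is $\phi_{F_0}^t = \phi_F^{a(t)}$, and the trajectory trace $\mathcal{T} := \{\phi_F^{a(t)}(x) : t \in [0,1], x \in L\}$ is compact. After, if necessary, an initial infinitesimal Hamiltonian perturbation of $L$ absorbed into the strict slack $R_k s_k - \|F\| > 0$, I would arrange $\mathcal{T} \cap Z = \emptyset$. Then I would pick an open neighborhood $U \supset \mathcal{T}$ with $U \cap Z = \emptyset$, a smooth bump $\rho \colon M \to [0,1]$ with $\rho \equiv 1$ on $Z$ and $\rho \equiv 0$ on $U$, and a compactly supported bump $\eta \colon \mathbb{A}(R) \to [0,1]$ with $\eta \equiv 1$ on $\mathbb{A}(0)$. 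Finally I would put
\[
H(t, x, p, q) := F_0(t, x) + K'\, \rho(x)\, \eta(p, q), \qquad K' := K + \|F_0\|_\infty.
\]

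The key verification is that along any integral curve $\gamma(t) = (x(t), p(t), q(t))$ of $X_H$ with $\gamma(0) = (x_0, p_0, 0) \in L \times \mathbb{A}(R)_0$, the $x$-component coincides with $\phi_F^{a(t)}(x_0) \in \mathcal{T} \subset U$, where $\rho$ and $d\rho$ both vanish identically; hence the profile term $K'\rho\eta$ contributes no Hamiltonian vector field along $\gamma$, and the $(p,q)$-component remains frozen at $(p_0, 0)$. Since $s \neq 0$ in $(\mathbb{R}/\mathbb{Z})^n$, the endpoint $\gamma(1) = (x_0, p_0, 0)$ does not lie in $L \times \mathbb{A}(R)_s$, so no chord in class $\alpha_s$ exists. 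The bound $H \geq K$ on $S^1 \times Z \times \mathbb{A}(0)$ follows at once: there $\rho\eta = 1$, so $H = F_0 + K' \geq K' - \|F_0\|_\infty = K$.

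The hard part will be arranging the geometric condition $\mathcal{T} \cap Z = \emptyset$, which requires the \emph{entire} $F$-trajectory of $L$, not just its endpoint $\phi_F^1(L)$, to avoid $Z$. In the benign sub-case $L \cap Z = \emptyset$ this is readily achieved by choosing $F$ of small spatial support concentrated near $L$; in the sub-case $L \cap Z \neq \emptyset$ a preliminary small Hamiltonian isotopy of $L$ must be performed to separate it from $Z$ before running the three-phase scheme above, with the strict inequality $\|F\| < R_k s_k$ providing the necessary slack so that the combined energy remains below $R_k s_k$. The remaining verifications --- smoothness and compact support of $H$ --- are routine.
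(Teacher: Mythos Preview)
Your approach has a genuine gap in the case $L \cap Z \neq \emptyset$, which is precisely the non-trivial regime. Your scheme needs the bump $\rho$ to vanish on a neighbourhood $U$ of the full trajectory trace $\mathcal{T} = \bigcup_{t}\phi_F^{a(t)}(L)$ while satisfying $\rho \equiv 1$ on $Z$; since $a(0)=0$ forces $L=\phi_F^0(L)\subset \mathcal{T}$, this is impossible whenever $L$ meets $Z$. Your proposed ``preliminary Hamiltonian isotopy of $L$'' cannot repair this: the Lagrangian $L$ is \emph{fixed} in the definition of $C(M,L,Z,R,s)$, and every chord of your Hamiltonian $H$ must start on $L\times\mathbb{A}(R)_0$, not on a displaced copy $L'\times\mathbb{A}(R)_0$. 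Concretely, a chord with initial point $(x_0,p_0,0)$ where $x_0 \in L\cap Z$ begins inside the region $\{\rho\equiv 1\}$, so your central claim---that the profile term $K'\rho\eta$ contributes no Hamiltonian vector field along the chord---already fails at $t=0$. (In the sub-case $L\cap Z=\emptyset$ one has $E(Z;L)=0$ and may simply take $F=0$; your construction then works, but the hypothesis becomes $R_ks_k>0$ and the statement is easy.)

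A symptom of this gap is that your argument never uses the quantitative hypothesis $R_k s_k > E(Z;L)$ in a substantive way: you invoke $\|F\|<R_k s_k$ only as ``slack'' for a perturbation you cannot actually carry out, not to control any dynamics. The paper's proof is organised quite differently and uses this hypothesis decisively. It first establishes a general criterion (Proposition~\ref{displaceable prop}): if some compactly supported $\hat H$ satisfies $Y_0\cap\phi_{\hat H}(Z')=\emptyset$ and $\mathsf{Ch}(\hat H;Y_1,Y_0,\bar\alpha)=\emptyset$, then $C_{BEP}=+\infty$. It then takes a Hamiltonian $H$ on $M$ displacing $Z$ from $L$ with $\|H\|<E(Z;L)+\epsilon$ and lifts it to $\hat H(t,x,p,q)=\prod_i\rho_i(p_i)\,H(t,x)$, choosing the cutoff $\rho_k$ with $|\dot\rho_k|<|s_k|/(E(Z;L)+\epsilon)$. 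The total $q_k$-drift along any $\hat H$-trajectory is then bounded by $\int_0^1|\dot\rho_k(p_k)|\cdot|H(t,x)|\,dt<|s_k|$, which rules out chords in class $\bar\alpha_s$; the displacement condition on $Z\times\mathbb{A}(0)$ is inherited from $H$. Thus the inequality $R_ks_k>E(Z;L)$ enters as a bound on the angular drift in the $q_k$-direction, not as slack for a perturbation of $L$.
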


The proof is also a Lagrangian analogue of \cite{Ka}.

\subsection*{Technical remark}

In the case of Oh-Schwarz spectral invariants, (abstractly) displaceable open subsets have some special property.
The source of these properties is the bounded spectrum condition for (abstractly) displaceable subsets (Proposition \ref{abstract disp bounded spec ham 2}),

In our Lagrangian spectral invariants, we prove the bounded spectrum condition for relatively displaceable subsets (Proposition \ref{Ostrover}) and it enables us to make relative versions of some known results.

However, it is not sufficient to replace the bounded spectrum condition for (abstractly) displaceable subsets with the one for relatively displaceable subsets.
There is a large technical difference between Oh-Schwarz spectral invariants and Lagrangian spectral invariants.
Oh-Schwarz spectral invariants are invariant under the action of Hamiltonian diffeomorphisms, but the same property does not hold for Lagrangian ones.
This difference makes some technical difficulties.

In the author's opinion, the easiest way to understand this difficulty is comparing our Poisson bracket inequality (Proposition \ref{pb ineq}) with the original one (Theorem 3.1 of \cite{P12} and Proposition 4.6.1 of \cite{PR}).
For more precise explanation, see Remark \ref{technical remark on pb}.

\subsection*{Overview of the present paper}


In Section \ref{prel section}, we prepare some notions and define Lagrangian spectral invariants.
In Section \ref{module section}, we prove Corollary \ref{module find 1}.

In Section \ref{Ostrover section}, we define the bounded spectrum condition for open subsets of a symplectic manifold.
We prove abstractly or relatively displaceable open subsets satisfy the bounded spectrum condition (Propositions \ref{Ostrover}, \ref{abstract disp bounded spec ham 1}).
In Section \ref{pb section}, we prove Theorem \ref{positive poisson} and Proposition \ref{disp pb inv}.

In Section \ref{Properties of partial quasi-state}, we provide some properties of asymptotic Lagrangian spectral invariants and prove Theorem \ref{hv from shv}..
In Section \ref{PB ineq section}, we formulate a Lagrangian version of Poisson bracket inequality (Proposition \ref{pb ineq}) which is necessary for proving some results in this paper.

In Section \ref{stem section}, we prove some properties of asymptotic Lagrangian spectral invariants and Theorem \ref{stem shv}.
In Section \ref{non-deg section}, we prove Theorem \ref{non-deg of spec norm}.
In Section \ref{Proof of the energy capacity inequality}, we prove a relative version of the energy capacity inequality (Theorem \ref{energy capacity ineq}) by Theorem \ref{non-deg of spec norm}.

In this section, we also prove Proposition \ref{torus summary}.
To prove heaviness of a longitude curve with respect to a meridian curve ((3) of Proposition \ref{torus summary}), we use Theorem \ref{non-deg of spec norm}.

In Section \ref{prf of main theorem}, we prove Theorem \ref{main monotone just} by (3) of Theorem \ref{torus summary}.
In Section \ref{flex section}, we prove Proposition \ref{displaceable main}.

\subsection*{Acknowledgment}
For the author, this paper is the first work where he uses the Lagrangian Floer theory.
The author thanks Professor Yong-Geun Oh and Professor Kaoru Ono for replying some elementary questions on the Lagrangian Floer theory.
He also thanks Yoosik Kim, Weonmo Lee, Ryuma Orita, Professor Leonid Polterovich, Fumihiko Sanda and Frol Zapolsky for faithful discussions and comments.
Especialy, Ryuma Orita checked this paper carefully and pointed out a lot of careless mistakes.

He is supported by IBS-R003-D1 and JP18J00765.

\section{Preliminary}\label{prel section}


In the present paper, we follow the notion of \cite{LZ} and we consider only the case in which the coefficient on Floer theory is $\mathbb{Z}/2\mathbb{Z}$.

For a subset $X$ of $M$, we define
\[\Omega_0(X)=\{z\colon [0,1]\to M, \text{ smooth map }; z(0),z(1)\in X, [z]=0_X\},\]
 where $0_X$ is the trivial element of $\pi_1(M,X)$.

To consider the Lagrangian Floer theory, we consider the following covering space $\tilde{\Omega}_0(L)$ of $\Omega_0(L)$.
Note that our explanation is a rough sketch and omit the definitions of some words.
See \cite{LZ} and \cite{Z} for them and details.

Let $z\in\Omega_0(L)$.
Two cappings $\hat{z},\hat{z}^\prime\colon D^2\setminus\{1\}\to M$ of $z$ are \textit{equivalent} if the preglued map $\hat{z}\natural(-\hat{z}^\prime)$ defines the trivial element in $\pi_2(M,L)$, where $-\hat{z}^\prime$ is the opposite map of $\hat{z}^\prime$.
Two pairs $(z,\hat{z}),(z^\prime,\hat{z}^\prime)$ are \textit{equivalent} if $z=z^\prime$ and $\hat{z},\hat{z}^\prime$ are equivalent as cappings of $z=z^\prime$.
Let $[z,\hat{z}]$ denote the class represented by $(z,\hat{z})$ and define
\[\tilde{\Omega}_0(L)=\{[z,\hat{z}]; z\in\Omega_0(L), \text{ $\hat{z}$ is a capping of $z$}\}.\]

For a Hamiltonian function $H\colon S^1\times M\to\mathbb{R}$, the action functional $\mathcal{A}_H^L\colon \tilde{\Omega}_0(L)\to\mathbb{R}$ is defined by
\[\mathcal{A}_H^L([z,\hat{z}])=\int_0^1H_t(z(t))dt-\int_{D^2}\hat{z}^\ast\omega.\]

For a connected subset $X$ of $M$, let $\mathsf{Ch}(H;X)$ denote $\mathsf{Ch}(H;X,X,0_X)$.
We define its covering $\tilde{\mathsf{Ch}}(H;X)$ by
\[\tilde{\mathsf{Ch}}(H;X)=\{[z,\hat{z}]\in\tilde{\Omega}_0(X); z\in\mathsf{Ch}(H;X)\}.\]

Then we regard $\tilde{\mathsf{Ch}}(H;L)$ as the set of critical points of $\mathcal{A}_H^L$.
We define its spectrum $\mathrm{Spec}(H;L)$ as the set of critical values of $\mathcal{A}_H^L$ \textit{i.e.}
\[\mathrm{Spec}(H;L)=\{\mathcal{A}_H^L([z,\hat{z}]); [z,\hat{z}]\in\tilde{\mathsf{Ch}}(H;L)\}.\]

We define the \textit{non-degeneracy} of Hamiltonian functions as follows:
\begin{defi}
A Hamiltonian function $H\colon S^1\times M\to\mathbb{R}$ is called \textit{non-degenerate} if for any element  $z$ of $\mathsf{Ch}(H;L)$, the induced linear map $(d\phi_H^1)_{z(0)}\colon T_{z(0)}M\to T_{z(1)}M$ maps the linear subspace $T_{z(0)}L$ to a linear subspace transverse to $T_{z(1)}L$.
\end{defi}

When $H$ is non-degenerate, the Floer chain complex $CF_\ast(H;L)$ is generated by $\widetilde{\mathsf{Ch}}(H;L)$ as a module over $\mathbb{Z}/2\mathbb{Z}$.
The complex $CF_\ast(H;L)$ is graded by the Conley-Zehnder index $\operatorname{ind}_{CZ}$.
Note that $\operatorname{ind}_{CZ}([z,\hat{z}\natural A])=\operatorname{ind}_{CZ}([z,\hat{z}])-\mu(A)$ for any map $A\in \pi_2(M)$ in our convention.
Let $F\colon M\to\mathbb{R}$ be a function which is  the pullback of a Morse function $\tilde{F}\colon L\to\mathbb{R}$ on some Weinstein neighborhood of $L$ and $x$ a critical point of $\tilde{F}$  such that $d\tilde{F}$ is $C^1$-small near $x$.
Then $\operatorname{ind}_{Morse}(x)=\operatorname{ind}_{CZ}([x,c_x])$,
where $c_x$ is a trivial capping disk and $\operatorname{ind}_{Morse}$ is the Morse index.
We formally obtain the boundary map of this complex by counting isolated negative gradient flow lines of $\mathcal{A}_H^L$ and we define its homology group $HF_\ast(H;L)$ which is called \textit{the Lagrangian Floer homology on contractible trajectories} of $H$.

Oh \cite{Oh94}, Biran and Cornea \cite{BC} (see also \cite{LZ}) defined the quantum homology $QH_\ast(L)$ of a monotone Lagrangian submanifold $L$ with $N_L\geq2$ and proved that there exists a natural isomorphism $\Phi\colon QH_\ast(L)\to HF_\ast(H;L)$.
We call this isomorphism the PSS isomorphism (\cite{PSS}).

Biran and Cornea defined the quantum product $\ast$ on $QH_\ast(L)$.
$QH_\ast(L)$ has the fundamental class $[L]$ which is the unit with respect to $\ast$.
If $QH_\ast(L)\neq0$, then $[L]\neq0$ since $[L]$ is the unit of $QH_\ast(L)$.

Given an element $A=\sum_i a_i[z_i,\hat{z}_i]$ of $CF_\ast(H;L)$, we define the action level $l_H(A)$ of $A$ by
\[l_H(A)=\max\{\mathcal{A}_H^L([z_i,\hat{z}_i]);a_i\neq 0\}.\]

For a non-zero element $\mathtt{a}$ of $QH_\ast(L)$, we define the spectral invariant associated with $H$ and $\mathtt{a}$ by
\[c^L(\mathtt{a},H)=\inf\{l_H(A);[A]=\Phi(\mathtt{a})\}.\]
It is known that $c^L(\mathtt{a},H)$ is a finite number for any non-degenerate $H$ and any non-trivial $\mathtt{a}\in QH_\ast(L)$.

Let $H\colon S^1\times M\to\mathbb{R}$ be a continuous function, $\mathtt{a}$ a non-zero element of $QH_\ast(L)$.
We define the spectral invariant $c^L(\mathtt{a},H)$ associated with $H$ by
\[c^L(\mathtt{a},H)=\lim_{n\to\infty}c^L(\mathtt{a},H_n),\]
where $\{H_n\}_n$ is a sequence of non-degenerate Hamiltonian functions converging to $H$ in $L^\infty$-norm.
It is known that this limit exists and does not depend on the choice of $\{H_n\}_n$.
For a more precise argument, see Subsection 3.3 of \cite{LZ}.

In the present paper, the following proposition of Leclercq and Zapolsky plays a very important role.

\begin{prop}[Theorem 3 and Proposition 4 of \cite{LZ}]\label{LZ}
The spectral invariant has the following properties for any non-trivial elements $\mathtt{a},\mathtt{b}$ of $QH_\ast(L)$.

\begin{description}
\item[(1)\textit{Lipschitz continuity}]
For any continuous functions $F,G\colon S^1\times M\to\mathbb{R}$,
\[\int_0^1\min_M(F_t-G_t)dt\leq c^L(\mathtt{a},F)-c^L(\mathtt{a},G)\leq \int_0^1\max_M(F_t-G_t)dt,\]

\item[(2)\textit{Homotopy invariance}] 
Assume that normalized smooth functions $F,G\colon$ $S^1\times M\to\mathbb{R}$ satisfy $\phi^1_F=\phi^1_G$ and that their Hamiltonian isotopies $\{\phi^t_F\}$ and $\{\phi^t_G\}$ are homotopic relative to endpoints. Then $c^L(\mathtt{a},F)=c^L(\mathtt{a},G)$,

\item[(3)\textit{Triangle inequality}]
$c^L(\mathtt{a}\ast{\mathtt{b}},F\natural G) \leq c^L(\mathtt{a},F)+c^L(\mathtt{b},G)$ for any continuous functions $F,G\colon S^1\times M\to\mathbb{R}$,

\item[(4)\textit{Hamiltonian shift}]
Let $\rho\colon S^1\to\mathbb{R}$ be a continuous function.
Then, for any continuous function $H\colon S^1\times M\to\mathbb{R}$,
\[c^L(\mathtt{a},H+\rho)=c^L(\mathtt{a},H)+\int_0^1\rho(t)dt,\]
where $H+\rho\colon S^1\times M\to \mathbb{R}$ is a Hamiltonian function defined by $(H+\rho)(t,x)=H(t,x)+\rho(t)$,

\item[(5)]
$c^L([L],0)=0$,

\item[(6)\textit{Spectrality}]
For any smooth function $H\colon S^1\times M\to\mathbb{R}$,
\[c^L(\mathtt{a},H)\in\mathrm{Spec}(H;L).\]

\end{description}
\end{prop}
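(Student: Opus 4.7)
My plan is to work at the chain level in Lagrangian Floer homology for non-degenerate smooth Hamiltonians, establish the six properties there, and then extend to continuous Hamiltonians via the $L^\infty$-limit definition, using (1) to guarantee well-definedness of that limit. Throughout I would keep the sign and grading conventions of \cite{LZ}.

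Properties (4) and (5) are essentially bookkeeping. For (4), $X_{H+\rho}^t = X_H^t$ because $\rho$ depends only on $t$, so $\mathsf{Ch}(H+\rho;L) = \mathsf{Ch}(H;L)$ and their cappings coincide; on any capped chord $\mathcal{A}_{H+\rho}^L([z,\hat z]) = \mathcal{A}_H^L([z,\hat z]) + \int_0^1 \rho(t)\,dt$, so the entire action spectrum shifts by the same constant, and so does the min-max. For (5), the zero Hamiltonian has only the constant chords in $L$ as critical points, and the PSS isomorphism identifies $[L]$ with the Morse-theoretic fundamental class represented by the maxima of a Morse function on $L$ pulled back to a Weinstein neighborhood, whose trivially capped representatives have $\mathcal{A}_0^L = 0$.

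For (1), I would use the continuation homomorphism $\Phi_{F,G}: HF_*(F;L) \to HF_*(G;L)$ induced by an $s$-dependent interpolating Hamiltonian. The standard a priori energy estimate for Floer trajectories bounds the action shift on chain-level representatives by $\int_0^1 \max_M(G_t - F_t)\,dt$, forcing $c^L(\mathtt{a}, G) \leq c^L(\mathtt{a}, F) + \int_0^1 \max_M(G_t - F_t)\,dt$; symmetry in $F, G$ gives the lower bound. Property (1) then extends to $C^0$ Hamiltonians essentially by construction. For (3), I would realize the Biran--Cornea quantum product at the chain level as a pair-of-pants operation $\mu_2: CF_*(F;L) \otimes CF_*(G;L) \to CF_*(F\natural G;L)$ counting holomorphic triangles with boundary on $L$. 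The usual subadditivity of action under $\mu_2$, namely $l_{F\natural G}(\mu_2(A \otimes B)) \leq l_F(A) + l_G(B)$, passes to homology and taking infima over representatives of $\Phi(\mathtt{a})$ and $\Phi(\mathtt{b})$ yields the triangle inequality. For (2), build a filtered continuation equivalence associated to a homotopy of Hamiltonian isotopies rel endpoints; because $F, G$ are normalized and their isotopies are homotopic rel endpoints, the constant action-shift that would otherwise appear vanishes, so the equivalence preserves action filtrations exactly and $c^L(\mathtt{a}, F) = c^L(\mathtt{a}, G)$.

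The main obstacle is (6) spectrality. For non-degenerate smooth $H$, $\mathrm{Spec}(H;L)$ is a closed measure-zero subset of $\mathbb{R}$, a countable union of translates of $\omega(\pi_2(M,L))$ by the finitely many chord values, and a minimizing sequence of representing cycles eventually concentrates on a single generator whose action equals the infimum, giving $c^L(\mathtt{a}, H) \in \mathrm{Spec}(H;L)$. Extending to general smooth $H$ and then to continuous $H$ requires a delicate approximation argument: approximate by non-degenerate $H_n \to H$, invoke (1) to get $c^L(\mathtt{a}, H_n) \to c^L(\mathtt{a}, H)$, and show that a subsequence of the corresponding critical points converges modulo compactness to a genuine critical point of $\mathcal{A}_H^L$, so that the limit lies in $\mathrm{Spec}(H;L)$. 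Ruling out bubbling of sphere and disk components and controlling escape of action values is the principal technical difficulty, and is precisely where the hypotheses $N_L \geq 2$ and monotonicity of $L$ play their essential role.
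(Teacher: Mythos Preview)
The paper does not prove this proposition at all: it is stated as a quotation of Theorem~3 and Proposition~4 of \cite{LZ} and used as a black box throughout, so there is no in-paper argument to compare your attempt against. Your sketch is a faithful outline of the standard Floer-theoretic proof as carried out in \cite{LZ}: continuation maps and energy estimates for (1), a filtered continuation equivalence along a homotopy of isotopies for (2), the pair-of-pants (holomorphic triangle) product for (3), direct computation for (4) and (5), and tightness of the action filtration plus measure-zero/discreteness of the spectrum for (6).

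Two small remarks. First, item (6) is asserted only for \emph{smooth} $H$, so there is no need to extend spectrality to continuous Hamiltonians; the passage from non-degenerate to possibly degenerate smooth $H$ already requires the compactness argument you describe, but you should stop there. Second, in your discussion of (6) the role of monotonicity and $N_L\geq 2$ is primarily to make the Floer complex and the PSS isomorphism well-defined (ruling out index-one disk bubbling in $\partial$ and index-zero bubbling in the PSS moduli spaces); for spectrality itself the relevant consequence of monotonicity is that $\omega(\pi_2(M,L))=\lambda N_L\mathbb{Z}$ is discrete, which makes $\mathrm{Spec}(H;L)$ closed and of measure zero (cf.\ Lemma~30 of \cite{LZ}, also invoked in the paper in the proof of Proposition~\ref{Ostrover}), so that the limit of spectral values along a non-degenerate approximation remains spectral.
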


For a general Hamiltonian function $H\colon S^1\times M\to\mathbb{R}$ ($H$ can be degenerate), we define the spectral invariant $c^L(a,H)$ by the Lipschitz continuity.
Then the spectral invariant defined for general Hamiltonian functions also satisfies the properties in Proposition \ref{LZ}.

In this paper, we also consider spectral invariant defined in terms of the Hamiltnian Floer theory (see \cite{Sch}, \cite{Oh05b} and \cite{Oh}).
For a closed symplectic manifold $(M,\omega)$, we have the quantum homology $QH_\ast(M)$
(For simplicity, we consider the coefficient $\mathbb{Z}/2\mathbb{Z}$).
For a non-trivial element $\mathtt{a}$ of the quantum homology $QH_\ast(M)$ and a continuous function $H\colon S^1\times M\to\mathbb{R}$, let $c(\mathtt{a},H)$ denote the Oh-Schwarz (Hamiltonian) spectral invariant associated with $\mathtt{a}$ and $H$.
There are some conventions of the Oh-Schwarz spectral invariants, We use the same convention as \cite{EP09}.
In this paper, we use the same convention as \cite{EP09} and \cite{LZ} and compare our spectral invariants with the Oh-Schwarz spectral invariants.

The quantum homology $QH_\ast(M)$ acts on $QH_\ast(L)$ via the quantum module action (\cite{BC}, \cite{Z}),
\[\bullet\colon QH_\ast(M)\otimes QH_\ast(L)\to QH_\ast(L).\]
 Related to this action, we have the following convenient inequality.

\begin{prop}[Proposition 5 of \cite{LZ}]\label{ham and lagr}
Let $\mathtt{e}$ be an element of $QH_\ast(M)$ and $\mathtt{e}^\prime$ an element of $QH_\ast(L)$.

 For any Hamiltonian functions $F,G\colon S^1\times M\to\mathbb{R}$,
 \[c^L(\mathtt{e}\bullet\mathtt{e}^\prime,F\natural G)\leq c^L(\mathtt{e}^\prime,F)+c(\mathtt{e},G).\]
\end{prop}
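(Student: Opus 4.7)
The plan is to build a chain-level realization of the quantum module action $\bullet \colon QH_\ast(M) \otimes QH_\ast(L) \to QH_\ast(L)$ and read off the desired inequality from the underlying energy identity. By the Lipschitz property of Proposition \ref{LZ}(1), I first reduce to the case where $F$ and $G$ are non-degenerate, so that $c^L(\mathtt{e}^\prime,F)$, $c(\mathtt{e},G)$ and $c^L(\mathtt{e}\bullet\mathtt{e}^\prime,F\natural G)$ are all defined as infima of action levels over chain representatives in $CF_\ast(F;L)$, $CF_\ast(G)$ and $CF_\ast(F\natural G;L)$ respectively. The general case then follows by approximation.

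The core construction is a chain map $m\colon CF_\ast(G) \otimes CF_\ast(F;L) \to CF_\ast(F\natural G;L)$ defined by counting isolated solutions of a perturbed Cauchy--Riemann equation on a disk with boundary on $L$ carrying three punctures: an interior puncture asymptotic to a $1$-periodic orbit of $G$ (input), a boundary puncture asymptotic to a chord of $F$ based on $L$ (input), and a boundary puncture asymptotic to a chord of $F\natural G$ based on $L$ (output). One fixes a sub-closed $1$-form on the domain which equals $G\,dt$ on the cylindrical end and $F\,dt$, $(F\natural G)\,dt$ on the respective boundary strip ends; the equation then reads $du + J(du - X_{\cdot})^{0,1} = 0$ with the appropriate perturbation. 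The standard energy identity gives
\[
\mathcal{A}^L_{F\natural G}(\text{output}) \;\leq\; \mathcal{A}^L_F(\text{chord input}) + \mathcal{A}_G(\text{orbit input})
\]
for every such solution, because energy is non-negative. Consequently, if $\alpha \in CF_\ast(F;L)$ and $\beta \in CF_\ast(G)$ are cycles, then $m(\beta \otimes \alpha) \in CF_\ast(F\natural G;L)$ is a cycle of action level at most $l_F(\alpha)+l_G(\beta)$.

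To conclude, given $\varepsilon>0$, choose cycles $\alpha,\beta$ representing $\mathtt{e}^\prime$ and $\mathtt{e}$ with action levels at most $c^L(\mathtt{e}^\prime,F)+\varepsilon$ and $c(\mathtt{e},G)+\varepsilon$. Then $m(\beta\otimes\alpha)$ represents $\mathtt{e}\bullet\mathtt{e}^\prime$ with action at most $c^L(\mathtt{e}^\prime,F)+c(\mathtt{e},G)+2\varepsilon$, so letting $\varepsilon \to 0$ yields the claim, provided one has checked that the class $[m(\beta\otimes\alpha)]$ in $HF_\ast(F\natural G;L)$ corresponds to $\mathtt{e}\bullet\mathtt{e}^\prime$ under the PSS isomorphism $\Phi$.

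The main obstacle is precisely this last compatibility check: identifying the homology-level operation induced by $m$ with the Biran--Cornea quantum module action. One proves it by a cobordism argument interpolating between the Floer data chosen above and the standard data defining $\bullet$, controlling sphere and disk bubbling along the way; this is exactly where monotonicity and $N_L \geq 2$ are used, since they prevent uncontrolled bubbling in codimension one. Transversality for disks with mixed interior/boundary asymptotics and the gluing needed to equate the two operations at the chain level add further technical overhead, but the full package is standard in the Biran--Cornea and Leclercq--Zapolsky framework referenced in the paper.
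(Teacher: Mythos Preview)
The paper does not give its own proof of this proposition; it is quoted as Proposition~5 of \cite{LZ} and used as a black box throughout. Your outline is essentially the argument Leclercq and Zapolsky carry out: realize the module action at chain level by counting perturbed holomorphic disks with one interior puncture (asymptotic to a Hamiltonian orbit of $G$) and two boundary punctures on $L$ (asymptotic to chords of $F$ and $F\natural G$), extract the action-filtration inequality from the energy identity for a sub-closed perturbation $1$-form, and verify compatibility with the PSS isomorphisms via a cobordism argument that degenerates the Floer data to the pearl/quantum data defining $\bullet$. So your approach is correct and matches the cited source; there is nothing further in the present paper to compare against. One minor quibble: the Floer equation on the punctured disk should read $(du - X_{H}\otimes\beta)^{0,1}=0$ for the chosen Hamiltonian-valued $1$-form $\beta$, not $du + J(du - X_{\cdot})^{0,1}=0$ as you wrote.
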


\section{Proof of Corollary \ref{module find 1}}\label{module section}
We provide the following theorem which generalizes Corollary \ref{module find 1}.

\begin{thm}\label{module structure apply}
Let $L$ be a monotone Lagrangian submanifold of a closed symplectic manifold $(M,\omega)$ with $N_L\geq2$ and $QH_\ast(L)\neq0$.
Let $\mathtt{e}$ be an idempotent of $QH_\ast(M)$ and $\mathtt{e}^\prime$ an idempotent of $QH_\ast(L)$.
Assume that there exists an element $\mathtt{e}^{\prime\prime}$ of $QH_\ast(L)$ such that $\mathtt{e}\bullet\mathtt{e}^{\prime\prime}=\mathtt{e}^{\prime}$.
Then 
\begin{itemize}
\item
Any $\mathtt{e}$-superheavy subset $X$ is $(L,\mathtt{e}^{\prime})$-superheavy,
\item
Any $(L,\mathtt{e}^\prime)$-heavy subset $Y$ is $\mathtt{e}$-heavy.
\end{itemize}
\end{thm}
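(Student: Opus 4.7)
The plan is to reduce both statements to a single comparison inequality between the stabilized functionals, namely
\[\zeta_{\mathtt{e}'}^L(H) \le \zeta_{\mathtt{e}}(H)\]
for every continuous $H\colon S^1\times M\to\mathbb{R}$. Once this is established, the two conclusions follow by a one-line chaining with the defining inequalities: if $X$ is $\mathtt{e}$-superheavy, then $\zeta_{\mathtt{e}'}^L(H)\le \zeta_{\mathtt{e}}(H)\le \sup_{S^1\times X}H$, so $X$ is $(L,\mathtt{e}')$-superheavy; and if $Y$ is $(L,\mathtt{e}')$-heavy, then $\zeta_{\mathtt{e}}(H)\ge \zeta_{\mathtt{e}'}^L(H)\ge \inf_{S^1\times Y}H$, so $Y$ is $\mathtt{e}$-heavy.

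To prove the comparison inequality I would apply Proposition \ref{ham and lagr}, the module inequality between Hamiltonian and Lagrangian spectral invariants, to the factorization $\mathtt{e}'=\mathtt{e}\bullet\mathtt{e}''$ with the specific choice $F=0$ and $G=H^{\natural k}$. Since $0\natural H^{\natural k}=H^{\natural k}$, this yields
\[c^L(\mathtt{e}',H^{\natural k})=c^L(\mathtt{e}\bullet\mathtt{e}'',0\natural H^{\natural k})\le c^L(\mathtt{e}'',0)+c(\mathtt{e},H^{\natural k}).\]
Dividing by $k$ and letting $k\to\infty$, the finite constant $c^L(\mathtt{e}'',0)$ contributes nothing in the limit, and the two stabilizations $\zeta_{\mathtt{e}'}^L$ and $\zeta_{\mathtt{e}}$ appear directly on the remaining terms, giving the desired inequality.

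I do not anticipate any genuine obstacle: the whole argument is a direct consequence of Proposition \ref{ham and lagr} combined with the very definition of the stabilization $\zeta$. The only minor point to verify is that $c^L(\mathtt{e}'',0)$ is well-defined and finite, which is automatic provided $\mathtt{e}''\neq 0$; this in turn is forced because $\mathtt{e}''=0$ would give $\mathtt{e}'=\mathtt{e}\bullet 0=0$, contradicting the use of $\mathtt{e}'$ as an idempotent against which Lagrangian spectral invariants are evaluated. Corollary \ref{module find 1} then follows immediately by taking $\mathtt{e}=[M]$ and $\mathtt{e}''=\mathtt{e}'$, since $[M]$ acts as the unit of the quantum module action on $QH_\ast(L)$.
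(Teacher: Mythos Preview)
Your proposal is correct and follows essentially the same route as the paper: the paper isolates the comparison inequality $\zeta^L_{\mathtt{e}'}(F)\le\zeta_{\mathtt{e}}(F)$ as a separate lemma (Lemma~\ref{module structure lemma}), proves it by applying Proposition~\ref{ham and lagr} to obtain $c^L(\mathtt{e}',kF)\le c(\mathtt{e},kF)+c^L(\mathtt{e}'',0)$ and passing to the limit, and then deduces both heaviness statements exactly as you do. Your use of $H^{\natural k}$ rather than $kF$ is in fact the more careful formulation, matching the definition of $\zeta$ directly.
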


Since $[M]\bullet e^\prime=e^\prime$, Corollary \ref{module find 1} follows from Theorem \ref{module structure apply}.

To prove Theorem \ref{module structure apply}, we prove the following lemma.

\begin{lem}\label{module structure lemma}
Let $L$ be a monotone Lagrangian submanifold of a closed symplectic manifold $(M,\omega)$ with $N_L\geq2$ and $QH_\ast(L)\neq0$.
Let $\mathtt{e}$ be an idempotent of $QH_\ast(M)$ and $\mathtt{e}^\prime$ an idempotent of $QH_\ast(L)$.
Assume that there exists an element $\mathtt{e}^{\prime\prime}$ of $QH_\ast(L)$ such that $\mathtt{e}\bullet\mathtt{e}^{\prime\prime}=\mathtt{e}^{\prime}$.
Then 
\[\zeta^L_{\mathtt{e^\prime}}(F)\leq \zeta_{\mathtt{e}}(F).\]
\end{lem}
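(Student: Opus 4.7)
The plan is to deduce the inequality as a direct asymptotic consequence of the mixed module inequality packaged in Proposition \ref{ham and lagr}. The point is that, by pairing the trivial Hamiltonian $0$ on the Lagrangian side with $F^{\natural k}$ on the Hamiltonian side, one can replace a factor of $\mathtt{e}''$ by a factor of $\mathtt{e}$ at the cost of an additive error that is uniform in $k$, which washes out after normalizing.

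Concretely, I would first apply Proposition \ref{ham and lagr} with the $QH_\ast(M)$-class taken to be $\mathtt{e}$ and the $QH_\ast(L)$-class taken to be $\mathtt{e}''$, and with the two Hamiltonian inputs chosen as $0$ and $F^{\natural k}$. Because $\phi^t_0 = \mathrm{id}$, we have $0 \natural F^{\natural k} = F^{\natural k}$, and the hypothesis $\mathtt{e} \bullet \mathtt{e}'' = \mathtt{e}'$ converts the output into
\[
c^L(\mathtt{e}', F^{\natural k}) \leq c^L(\mathtt{e}'', 0) + c(\mathtt{e}, F^{\natural k}),
\]
in which $c^L(\mathtt{e}'', 0)$ is a constant independent of $k$. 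I would then divide both sides by $k$ and let $k \to \infty$. The left-hand side converges to $\zeta^L_{\mathtt{e}'}(F)$ and the right-hand side to $\zeta_{\mathtt{e}}(F)$ (convergence of both limits is guaranteed by the subadditivity of the spectral invariants together with the idempotency of $\mathtt{e}'$ and $\mathtt{e}$), while $c^L(\mathtt{e}'', 0)/k \to 0$, producing exactly the desired inequality $\zeta^L_{\mathtt{e}'}(F) \leq \zeta_{\mathtt{e}}(F)$.

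There is essentially no obstacle in this lemma: the whole content is the mixed module inequality, together with the observation that the auxiliary $\mathtt{e}''$-dependence contributes only a single additive constant that vanishes in the asymptotic normalization. The main work in the surrounding statement lies not here but in the subsequent deduction of Theorem \ref{module structure apply} (and thus Corollary \ref{module find 1}), where this inequality is combined with the definitions of $\mathtt{e}$-(super)heaviness and $(L,\mathtt{e}')$-(super)heaviness to transfer (super)heaviness across the module action.
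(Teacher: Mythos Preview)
Your proof is correct and follows essentially the same route as the paper: apply Proposition~\ref{ham and lagr} with the Lagrangian input set to $0$ so that the constant $c^L(\mathtt{e}'',0)$ appears as an additive error, then divide by $k$ and pass to the limit. The only cosmetic difference is that the paper writes $kF$ where you write $F^{\natural k}$; your choice matches the paper's own definition of $\zeta^L_{\mathtt{a}}$ more literally.
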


\begin{proof}
By Proposition \ref{ham and lagr}, for any integer $k$,
\[c^L(\mathtt{e}^{\prime},kF)\leq c(\mathtt{e},kF)+c^L(\mathtt{e}^{\prime\prime},0).\]
Thus, by diving by $k$ and take the limit, we complete the proof.
\end{proof}

By Definition \ref{definition of heavy}, Theorem \ref{module structure apply} immediately follows from Lemma \ref{module structure lemma}.
The idea of comparing Lagrangian spectral invariant and Hamiltonian spectral invariant using the module structure $\bullet$ comes from Section 6 of \cite{LZ}.

\section{The bounded spectrum condition and Lagrangian version of Ostrover's trick}\label{Ostrover section}
The following notion was introduced by the author.
\begin{defi}[\cite{K14}]\label{bounded spectrum}
Let $L$ be a monotone Lagrangian submanifold of a closed symplectic manifold $(M,\omega)$ with $N_L\geq2$ and $QH_\ast(L)\neq0$.
Let $\mathtt{a}$ be an idempotent of $QH_\ast(L)$.

An open subset $U$ of $M$ satisfies \textit{the bounded spectrum condition} with respect to $\mathtt{a}$ if there is a positive number $K$ such that
\[c^L(\mathtt{a},F)<K,\]
for any Hamiltonian function $F\colon S^1\times U\to\mathbb{R}$ with compact support.

A subset $X$ of $M$  satisfies \textit{the bounded spectrum condition} with respect to $\mathtt{a}$ if there is an open neighborhood $U$ of $X$ such that $U$ satisfies the bounded spectrum condition with respect to $\mathtt{a}$.
\end{defi}

In this section, we give some examples of open subsets with the bounded spectrum condition.

By considering a Lagrangian analogue of Ostrover's argument \cite{O}, we prove the following proposition.
We use this proposition for several times in this paper.

\begin{prop}\label{Ostrover}
Let $L$ be a monotone Lagrangian submanifold of a closed symplectic manifold $(M,\omega)$ with $N_L\geq2$ and $QH_\ast(L)\neq0$.
Let $U$ be an open subset of $M$ and $H$ a Hamiltonian function satisfying $\phi_H(U)\cap L=\emptyset$.
Then for any idempotent $\mathtt{a}$ of the quantum homology $QH_\ast(L)$ and  any Hamiltonian function $F\colon S^1 \times U\to\mathbb{R}$ with compact support on $S^1\times U$,
\[c^L(\mathtt{a},F)\leq c^L(\mathtt{a},H)+c^L(\mathtt{a},\bar{H}).\]
\end{prop}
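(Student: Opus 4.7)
The plan is to adapt Ostrover's trick \cite{O} to the Lagrangian setting. The starting observation is the identity $\bar H\natural H = 0$ as a function on $S^1\times M$: since $\phi_{\bar H\natural H}^t = \mathrm{id}$, the Hamiltonian $(\bar H\natural H)_t$ is constant in $x$ for each $t$, and integrating over the closed manifold $M$ (using invariance of $\omega^m$ under $\phi_H^t$) forces this constant to vanish. Using $\bar H\natural H=0$, I would rewrite $F = \bar H\natural H\natural F$ and $H\natural F = (H\natural F\natural \bar H)\natural H$, so two applications of the triangle inequality (Proposition \ref{LZ}(3)) give
\[
c^L(\mathtt a,F) \leq c^L(\mathtt a,\bar H) + c^L(\mathtt a,H\natural F) \leq c^L(\mathtt a,\bar H) + c^L(\mathtt a, H\natural F \natural \bar H) + c^L(\mathtt a,H).
\]
It therefore suffices to bound $c^L(\mathtt a, H\natural F\natural \bar H) \leq 0$.

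Next I would compare $K := H\natural F\natural\bar H$, whose flow is $\phi_K^t = \phi_H^t\phi_F^t(\phi_H^t)^{-1}$, with the conjugated Hamiltonian $\tilde F(t,x) := F(t, \phi_H^{-1}(x))$, whose flow $\phi_{\tilde F}^t = \phi_H\phi_F^t\phi_H^{-1}$ is supported in $S^1\times \phi_H(U)$ for all $t$. Both have the same time-$1$ map, and the two Hamiltonian paths are homotopic relative to endpoints through
\[
\Phi_s^t = \phi_H^{(1-s)t+s}\,\phi_F^t\,\phi_H^{-((1-s)t+s)},\qquad s\in[0,1].
\]
A direct calculation using the symplectic invariance of $\omega^m$ yields $\int_M K_t\,\omega^m = \int_M \tilde F_t\,\omega^m$, so $K$ and $\tilde F$ admit matching normalizing shifts. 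Homotopy invariance (Proposition \ref{LZ}(2)) then provides $c^L(\mathtt a, K) = c^L(\mathtt a, \tilde F)$.

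Finally, since $\tilde F$ is supported in $S^1\times\phi_H(U)$, which is disjoint from $L$ by hypothesis, every Hamiltonian chord of $\tilde F$ from $L$ to $L$ is forced to be constant. Hence $\mathrm{Spec}(\tilde F;L)\subseteq -\omega(\pi_2(M,L))$, a discrete subset of $\mathbb R$ by monotonicity and $N_L\geq2$; the same holds for $s\tilde F$, $s\in[0,1]$. Combining spectrality (Proposition \ref{LZ}(6)) with Lipschitz continuity (Proposition \ref{LZ}(1)), the continuous function $s\mapsto c^L(\mathtt a,s\tilde F)$ takes values in this discrete set and is therefore constant, giving $c^L(\mathtt a,\tilde F) = c^L(\mathtt a, 0)$. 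For an idempotent $\mathtt a$, PSS represents $\mathtt a\in HF_\ast(0;L)\cong QH_\ast(L)$ by a cycle with trivial cappings, so $c^L(\mathtt a,0) \leq 0$; combined with $c^L(\mathtt a, 0)\geq 0$ coming from $\mathtt a\ast\mathtt a = \mathtt a$ and triangle, this forces $c^L(\mathtt a, 0) = 0$, completing the inequality.

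The main obstacle is the homotopy-invariance step: it requires that $\Phi_s^t$ be realized by a smooth two-parameter family of Hamiltonians and that the $\omega^m$-integral equality above actually suffices to conclude $c^L(\mathtt a,K)=c^L(\mathtt a,\tilde F)$ in the framework of \cite{LZ}. Everything else is a routine chaining of triangle inequality, spectrality, and discreteness of the Lagrangian action spectrum under monotonicity.
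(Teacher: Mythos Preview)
The real gap is at the end: your claim that $c^L(\mathtt{a},0)=0$ for every idempotent $\mathtt{a}$ is not justified, and the paper in fact treats $c^L(\mathtt{a},0)>0$ as a genuine possibility (see the standing hypothesis of Lemma~\ref{disjoint muryoku 1} and the case split opening the proof of Theorem~\ref{non-deg of spec norm}). The triangle inequality does give $c^L(\mathtt{a},0)\geq 0$, but the assertion ``PSS represents $\mathtt{a}$ by a cycle with trivial cappings'' is unfounded: a general idempotent of $QH_\ast(L)$ can involve nontrivial capping classes, and the minimal action level of a representing cycle may be strictly positive. Your argument therefore only yields
\[
c^L(\mathtt{a},F)\;\leq\; c^L(\mathtt{a},H)+c^L(\mathtt{a},\bar H)+c^L(\mathtt{a},0),
\]
which falls short of the proposition when $c^L(\mathtt{a},0)>0$. (By contrast, the homotopy-invariance step you flag as the ``main obstacle'' is fine: the family $\Phi_s^t$ is smooth, and your integral computation is exactly what is needed to match the normalizing shifts.)

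The paper avoids the extra $c^L(\mathtt{a},0)$ by never comparing with the zero Hamiltonian. It builds a one-parameter family $K^s$ given, up to reparametrization, by the concatenation of $\bar H$ with $sF$, and shows $\mathrm{Spec}(K^s;L)\subset\mathrm{Spec}(\bar H;L)$ directly: a chord of $K^s$ starts at some $x\in L$, hence $x\notin\phi_H(U)$, hence $\phi_{\bar H}(x)\notin U$ and the $sF$ portion fixes it, so the chord coincides with a chord of $\bar H$ with the same action. Since $\mathrm{Spec}(\bar H;L)$ has measure zero, the continuity argument lands on $c^L(\mathtt{a},K^s)=c^L(\mathtt{a},\bar H)$ rather than on $c^L(\mathtt{a},0)$, and a \emph{single} application of the triangle inequality, $c^L(\mathtt{a},F)\leq c^L(\mathtt{a},K^1)+c^L(\mathtt{a},H)$, finishes without any spurious term. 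If you want to keep your conjugation approach, the fix is to deform through $\bar H\natural(sF)$ (or its concatenated version) rather than through $s\tilde F$, so that the $s=0$ endpoint is $\bar H$ instead of $0$.
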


\begin{proof}

We give an upper bound of the spectral invariant associated with $F$ using the concatenation with $\phi_H^t$.

We can choose a smooth function $\chi\colon[0,\frac{1}{2}]\to[0,1]$ satisfying the following conditions.
\begin{itemize}
\item $\frac{\partial\chi}{\partial t}(t)\geq0$ for any $t\in{[0,\frac{1}{2}]}$, and
\item $\chi(t)=0$ for any $t\in[0,\frac{1}{5}]$ and $\chi(t)=1$ for any $t\in{[\frac{2}{5},\frac{1}{2}]}$.
\end{itemize}

For a real number $s$ with $s \in [0,1]$, we define the new Hamiltonian function $K^s \colon S^1\times M\to\mathbb{R}$ as follows:
\begin{equation*}
K^s(t,x)=
\begin{cases}
\frac{\partial\chi}{\partial t}(t)\bar{H}(\chi(t),x) & \text{when }t\in[0,\frac{1}{2}], \\
s\frac{\partial\chi}{\partial t}(t-\frac{1}{2})F(s\chi(t-\frac{1}{2}),x)& \text{when }t\in[\frac{1}{2},1].
\end{cases}
\end{equation*}
Since $\frac{\partial\chi}{\partial t}=0$ on neighborhoods of $t=0$ and $t=\frac{1}{2}$, $K^s$ is a smooth Hamiltonian function.

We claim  $\operatorname{Spec}(K^s;L)\subset\operatorname{Spec}(\bar{H};L)$ for a real number $s$ with $s \in [0,1]$.
Let $F^s\colon S^1\times\hat{W}\to\mathbb{R}$ denote the Hamiltonian function defined by 
\[F^s(t,x)=s\frac{\partial\chi}{\partial t}(\frac{t}{2})F(s\chi(\frac{t}{2}),x).\]
Let $[z,\hat{z}]$ be an element of $\tilde{\mathsf{Ch}}(\bar{H};L)$ and define $x\in M$ by $x=z(0)$.
If $x\in \phi_H(U)$, then $\gamma_{K^s}^x\notin\Omega_0(L)$ and in particular,  $\gamma_{K^s}^x\notin\mathsf{Ch}(\bar{H};L)$ since $\phi_H(U)\cap L=\emptyset$.
If $x\notin \phi_H(U)$, then $\phi_{\bar{H}}(x)\notin U$.
Thus $\gamma_{K^s}^x$ is equal to $\gamma_{\bar{H}}^x$ up to parameter change and $\int_0^1H(t,\gamma_{\bar{H}}^x(t))dt=\int_0^1K^s(t,\gamma_{K^s}^x(t))dt$.
Therefore we see that there exists a natural inclusion map $\iota\colon  \tilde{\mathsf{Ch}}(K^s;L)\to \tilde{\mathsf{Ch}}(\bar{H};L)$ which preserves values of the action functional, and hence $\operatorname{Spec}(K^s;L)\subset\operatorname{Spec}(\bar{H};L)$.
implies $c^L(\mathtt{a},K^s)\in\operatorname{Spec}(\bar{H};L)$.

By the Lipschitz continuity for spectral invariants (Proposition \ref{LZ} (1)), $c^L(\mathtt{a},K^s)$ depends continuously on $s$.
Since $\operatorname{Spec}(\bar{H};L)$ is a measure-zero set (Lemma 30 of \cite{LZ}),  $c^L(\mathtt{a},K^s)$ is a constant function of $s$.
The homotopy invariance for spectral invariants (Proposition \ref{LZ} (2)) implies 
\[c^L(\mathtt{a},K^0)=c^L(\mathtt{a},\bar{H}).\]
Hence for any $s \in [0,1]$,
 \[c^L(\mathtt{a},K^s)=c^L(\mathtt{a},\bar{H}).\]
Then $c^L(\mathtt{a},F)$ is estimated as follows.
 \begin{align*}
c^L(\mathtt{a},F)& \leq c^L(\mathtt{a},K^1)+c^L(\mathtt{a},H)\\
& = c^L(\mathtt{a},\bar{H})+c^L(\mathtt{a},H).
\end{align*}

\end{proof}



For (abstractly) displaceable subsets, we have the following proposition.

\begin{prop}\label{abstract disp bounded spec ham 1}
Let $L$ be a monotone Lagrangian submanifold of a closed symplectic manifold $(M,\omega)$ with $N_L\geq2$ and $QH_\ast(L)\neq0$.

Let $U$ be an open subset of $M$ and $H$ a Hamiltonian function satisfying $\phi_H(U)\cap \bar{U}=\emptyset$.
Then for any idempotent $\mathtt{a}$ of the quantum homology $QH_\ast(L)$ and  any Hamiltonian function $F\colon S^1 \times U\to\mathbb{R}$ with compact support on $S^1\times U$,
\[c^L(\mathtt{a},F)\leq c([M],H)+c([M],\bar{H})+c^L(\mathtt{a},0).\]
\end{prop}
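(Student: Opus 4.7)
The plan is to reduce the Lagrangian spectral invariant $c^L(\mathtt{a},F)$ to an Oh-Schwarz spectral invariant via the module-action inequality (Proposition \ref{ham and lagr}), and then apply Ostrover's original trick on the Hamiltonian side, which is precisely the proof strategy of Proposition \ref{Ostrover} replicated in the closed-string Hamiltonian Floer setting.

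First, since $[M]$ is the unit of $QH_\ast(M)$, we have $[M]\bullet\mathtt{a}=\mathtt{a}$. Writing $F=0\natural F$ and applying Proposition \ref{ham and lagr} with $\mathtt{e}=[M]$ and $\mathtt{e}'=\mathtt{a}$ yields
\[
c^L(\mathtt{a},F)=c^L([M]\bullet\mathtt{a},0\natural F)\leq c^L(\mathtt{a},0)+c([M],F).
\]

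Next, I would prove the Hamiltonian bounded-spectrum estimate
\[
c([M],F)\leq c([M],H)+c([M],\bar H)
\]
for $F$ supported in $S^1\times U$ with $\phi_H(U)\cap\bar U=\emptyset$. The proof is the word-for-word Hamiltonian analogue of Proposition \ref{Ostrover}: construct the homotopy $\{K^s\}_{s\in[0,1]}$ exactly as in that proof. A 1-periodic orbit of $K^s$ based at $x$ either starts with $x\notin\phi_H(U)$, in which case it coincides (up to reparametrization) with a 1-periodic orbit of $\bar H$ with the same action value, or starts with $x\in\phi_H(U)$, in which case the second half pushes $x$ into $U$ and so $\phi_{K^s}^1(x)\in\phi_H(U)$, which lies outside $\bar U$ and hence cannot equal any point of $U$ that the orbit would need to close up through (alternatively, simply noting $\phi_{K^s}^1(x)\in\phi_H(U)$ while $x\in\phi_H(U)$ forces the orbit not to be 1-periodic once one tracks the isotopy). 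Therefore $\operatorname{Spec}(K^s)\subset\operatorname{Spec}(\bar H)$, so $s\mapsto c([M],K^s)$ is a continuous function into a nowhere-dense set, hence constant, and equal to $c([M],\bar H)$ by the homotopy invariance at $s=0$. The Hamiltonian triangle inequality applied to $K^1\natural H$, which is homotopic rel endpoints to $F$ after reparametrization, then yields the desired bound.

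Combining the two inequalities gives
\[
c^L(\mathtt{a},F)\leq c^L(\mathtt{a},0)+c([M],H)+c([M],\bar H),
\]
which is the claim.

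The only delicate step, exactly as in Proposition \ref{Ostrover}, is the inclusion $\operatorname{Spec}(K^s)\subset\operatorname{Spec}(\bar H)$: one must carefully use the hypothesis $\phi_H(U)\cap\bar U=\emptyset$ to rule out 1-periodic orbits of $K^s$ that differ from those of $\bar H$. Everything else is a routine application of the properties of Oh-Schwarz spectral invariants (continuity, spectrality, homotopy invariance, triangle inequality) combined with Proposition \ref{ham and lagr}.
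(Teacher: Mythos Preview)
Your approach is essentially the same as the paper's: apply Proposition \ref{ham and lagr} with $[M]\bullet\mathtt{a}=\mathtt{a}$ to reduce to the Oh--Schwarz bound $c([M],F)\leq c([M],H)+c([M],\bar H)$, then combine. The only difference is that the paper quotes this Hamiltonian Ostrover estimate as the known Proposition \ref{abstract disp bounded spec ham 2} (citing \cite{U} and \cite{FGS}) rather than reproving it; your sketch of that reproof is a bit garbled in the case $x\in\phi_H(U)$ (the clean argument is: $(\phi_H)^{-1}(x)\in U$ implies $\phi_F^s((\phi_H)^{-1}(x))\in U$, which cannot equal $x$ since $\phi_H(U)\cap\bar U=\emptyset$ forces $x\notin U$), but the conclusion is correct and standard.
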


To prove Proposition \ref{abstract disp bounded spec ham 1},
we recall the original Ostrover's trick.

\begin{prop}[Proposition 3.1 of \cite{U}, see also Proposition 2.1 of \cite{FGS}]\label{abstract disp bounded spec ham 2}
Let $(M,\omega)$ be a closed symplectic manifold.
Let $U$ be an open subset of $M$ and $H$ a Hamiltonian function satisfying $\phi_H(U)\cap \bar{U}=\emptyset$.
Then for any idempotent $\mathtt{a}$ of the quantum homology $QH_\ast(M,\omega)$ and  any Hamiltonian function $F\colon S^1 \times U\to\mathbb{R}$ with compact support on $S^1\times U$,
\[c(\mathtt{a},F)\leq c(\mathtt{a},H)+c(\mathtt{a},\bar{H}).\]
\end{prop}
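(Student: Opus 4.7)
The plan is to transcribe Ostrover's trick—already carried out in the Lagrangian setting in the proof of Proposition~\ref{Ostrover}—to the ambient Hamiltonian Floer setting. Pick the same smooth cutoff $\chi\colon[0,\tfrac{1}{2}]\to[0,1]$ used there: non-decreasing, equal to $0$ on $[0,\tfrac{1}{5}]$ and equal to $1$ on $[\tfrac{2}{5},\tfrac{1}{2}]$. For $s\in[0,1]$ I would then define a smooth Hamiltonian $K^s\colon S^1\times M\to\mathbb{R}$ by
\[
K^s(t,x)=\begin{cases}
\dfrac{\partial\chi}{\partial t}(t)\,\bar{H}(\chi(t),x), & t\in[0,\tfrac{1}{2}],\\[4pt]
s\,\dfrac{\partial\chi}{\partial t}\bigl(t-\tfrac{1}{2}\bigr)\,F\bigl(s\chi(t-\tfrac{1}{2}),x\bigr), & t\in[\tfrac{1}{2},1].
\end{cases}
\]
Then $\phi_{K^s}^{1/2}=\phi_{\bar H}$ and $\phi_{K^s}^1=\phi_{sF}\circ\phi_{\bar H}$, while $K^0$ is a time-reparametrization of $\bar H$ and $K^1$ is homotopic rel.\ endpoints to the concatenation $F\natural\bar H$.

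The heart of the argument is the spectrum inclusion $\mathrm{Spec}(K^s)\subset\mathrm{Spec}(\bar H)$ for every $s\in[0,1]$. A contractible $1$-periodic orbit of $K^s$ corresponds to a fixed point $x$ of $\phi_{sF}\circ\phi_{\bar H}$. Since $F$ is compactly supported in $U$, the map $\phi_{sF}$ preserves $U$ and is the identity off $U$; so if $\phi_{\bar H}(x)\in U$ then $x=\phi_{sF}(\phi_{\bar H}(x))\in U$ as well, and combined with $x=\phi_H(\phi_{\bar H}(x))\in \phi_H(U)$ this would place $x$ in $U\cap\phi_H(U)\subset\bar U\cap\phi_H(U)=\emptyset$, a contradiction. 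Hence $\phi_{\bar H}(x)\notin U$, so $\phi_{sF}$ fixes $\phi_{\bar H}(x)$ and $x$ is itself a fixed point of $\phi_{\bar H}$ lying outside $U$. Consequently the second half of the $K^s$-orbit is constant and contributes nothing to the action, and any capping of the underlying $\bar H$-orbit extends canonically to a capping of the $K^s$-orbit with the same symplectic area, so its action lies in $\mathrm{Spec}(\bar H)$.

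From here the argument is formal. By Hofer continuity the function $s\mapsto c(\mathtt{a},K^s)$ is continuous, and by spectrality it takes values in the nowhere-dense set $\mathrm{Spec}(\bar H)$, so it is constant; evaluating at $s=0$ yields $c(\mathtt{a},K^s)\equiv c(\mathtt{a},\bar H)$. Since $K^1$ and $F\natural\bar H$ generate isotopies that are homotopic rel.\ endpoints (after normalization), homotopy invariance gives $c(\mathtt{a},F\natural\bar H)=c(\mathtt{a},K^1)=c(\mathtt{a},\bar H)$. Finally, the concatenation $(F\natural\bar H)\natural H$ generates the same isotopy $t\mapsto\phi_F^t$ as $F$, so by homotopy invariance $c(\mathtt{a},(F\natural\bar H)\natural H)=c(\mathtt{a},F)$, and the triangle inequality (using $\mathtt{a}\ast\mathtt{a}=\mathtt{a}$) yields
\[
c(\mathtt{a},F)=c(\mathtt{a},(F\natural\bar H)\natural H)\le c(\mathtt{a},F\natural\bar H)+c(\mathtt{a},H)=c(\mathtt{a},H)+c(\mathtt{a},\bar H),
\]
as required.

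The main obstacle I expect is the careful bookkeeping in the middle paragraph—checking that the canonical identification of orbits transfers cappings so that action values lift verbatim into $\mathrm{Spec}(\bar H)$—together with the normalization subtleties in the last step: the concatenations $K^1$ vs.\ $F\natural\bar H$ and $(F\natural\bar H)\natural H$ vs.\ $F$ in general differ by functions of $t$ alone, and one must verify that the Hamiltonian-shift corrections cancel (or absorb into normalization) before the triangle inequality delivers the clean final bound.
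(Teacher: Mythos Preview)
The paper does not supply its own proof of this proposition: it is quoted from \cite{U} and \cite{FGS} and used as a black box. Your proposal is the standard Ostrover trick in the Hamiltonian setting and is correct; it is in fact the direct Hamiltonian transcription of the argument the paper \emph{does} write out for the Lagrangian analogue (Proposition~\ref{Ostrover}), with ``chord of $K^s$ starting on $L$'' replaced by ``contractible $1$-periodic orbit of $K^s$'' and the displacement hypothesis $\phi_H(U)\cap L=\emptyset$ replaced by $\phi_H(U)\cap\bar U=\emptyset$.

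Your worry about normalization is easily dispatched and not a genuine obstacle. The paths $\{\phi_{K^1}^t\}$ and $\{\phi_{F\natural\bar H}^t\}$ represent the same element $\tilde\phi_F\tilde\phi_{\bar H}\in\widetilde{\mathrm{Ham}}(M,\omega)$, and one checks that $\int_0^1\mathrm{mean}(K^1_t)\,dt=\int_0^1\mathrm{mean}((F\natural\bar H)_t)\,dt$ (both equal $\int\mathrm{mean}(F_t)+\int\mathrm{mean}(\bar H_t)$, using that reparametrization preserves the time-integrated mean and that composition with a symplectomorphism preserves the spatial mean). Likewise $(F\natural\bar H)\natural H$ and $F$ generate the same element of $\widetilde{\mathrm{Ham}}$ and have the same time-integrated mean because $\mathrm{mean}(\bar H_t)=-\mathrm{mean}(H_t)$. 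So homotopy invariance plus the Hamiltonian shift give exactly the identities you need, and the triangle inequality finishes the proof as you wrote it.
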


\begin{proof}[Proof of Proposition \ref{abstract disp bounded spec ham 1}]
By Proposition \ref{ham and lagr},
\[c^L(\mathtt{a},F)\leq c([M],F)+c^L(\mathtt{a},0).\]
By Proposition \ref{abstract disp bounded spec ham 2},
\[c([M],F)\leq c([M],H)+c([M],\bar{H}).\]
Thus,
\[c^L(\mathtt{a},F)\leq c([M],H)+c([M],\bar{H})+c^L(\mathtt{a},0).\]
\end{proof}

By the Lipschitz continuity of Lagrangian spectral invariant, the following corollary immediately follows from Proposition \ref{Ostrover}, \ref{abstract disp bounded spec ham 1} and (1), (5) of Proposition \ref{LZ}.

\begin{cor}\label{Hofer Ostrover}
Let $L$ be a monotone Lagrangian submanifold of a closed symplectic manifold $(M,\omega)$ with $N_L\geq2$ and $QH_\ast(L)\neq0$.
Let $U$ be an open subset of $M$ which is displaceable from $L$ or $U$ itself.
Then for   any Hamiltonian function $F\colon S^1 \times U\to\mathbb{R}$ with compact support on $S^1\times U$,
\[c^L([L],F)\leq \bar{E}(U;L).\]
\end{cor}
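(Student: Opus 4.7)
The plan is to handle the two cases in the definition of $\bar{E}(U;L)=\min\{E(U;U),E(U;L)\}$ separately, by bounding $c^L([L],F)$ above by each of $E(U;L)$ and $E(U;U)$. In both cases the strategy is the same: first reduce the estimate to the Hofer length of a displacing Hamiltonian via the relevant Ostrover-type proposition, and then control each of the two summands by Lipschitz continuity starting from the normalization $c^L([L],0)=0$.

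First, suppose $H\colon S^1\times M\to\mathbb{R}$ satisfies $\phi_H(U)\cap L=\emptyset$. Applying Proposition \ref{Ostrover} with $\mathtt{a}=[L]$ gives
\[c^L([L],F)\leq c^L([L],H)+c^L([L],\bar{H}).\]
Now I use the Lipschitz continuity in Proposition \ref{LZ}(1), compared against the zero Hamiltonian, together with Proposition \ref{LZ}(5), to estimate
\[c^L([L],H)\leq\int_0^1\max_M H_t\,dt,\qquad c^L([L],\bar{H})\leq\int_0^1\max_M\bar{H}_t\,dt=-\int_0^1\min_M H_t\,dt,\]
where the last equality uses $\bar{H}(t,x)=-H(t,(\phi_H^t)^{-1}(x))$. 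Summing these bounds yields $c^L([L],F)\leq\|H\|$. Taking the infimum over all $H$ displacing $U$ from $L$ gives $c^L([L],F)\leq E(U;L)$.

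Next, suppose $H$ displaces $U$ from itself, i.e.\ $\phi_H(U)\cap\bar{U}=\emptyset$. Applying Proposition \ref{abstract disp bounded spec ham 1} with $\mathtt{a}=[L]$ gives
\[c^L([L],F)\leq c([M],H)+c([M],\bar{H})+c^L([L],0).\]
The last term vanishes by Proposition \ref{LZ}(5). The analogous Lipschitz continuity and normalization properties hold for the Oh--Schwarz invariants $c([M],\cdot)$, so the same argument as above bounds $c([M],H)+c([M],\bar{H})\leq\|H\|$. Taking the infimum yields $c^L([L],F)\leq E(U;U)$.

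Combining the two estimates gives $c^L([L],F)\leq\min\{E(U;L),E(U;U)\}=\bar{E}(U;L)$. There is no genuine obstacle here; the only mild care needed is to make sure that each step applies with the idempotent $[L]$ and that the normalization $c^L([L],0)=0$ is used consistently, so the constant from Proposition \ref{abstract disp bounded spec ham 1} truly drops out.
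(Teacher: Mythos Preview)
Your proof is correct and is essentially the paper's intended argument spelled out in full: the paper simply states that the corollary follows from Propositions~\ref{Ostrover} and~\ref{abstract disp bounded spec ham 1} together with (1) and (5) of Proposition~\ref{LZ}, and you have carried out precisely that deduction, treating the two displaceability cases separately and using Lipschitz continuity plus $c^L([L],0)=0$ (and the analogous standard facts for $c([M],\cdot)$) to pass from the Ostrover-type bounds to the Hofer length $\|H\|$.
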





\section{Application to Poisson bracket invariant}\label{pb section}

\begin{lem}\label{sum composition deiff}

For any Hamiltonian functions $F,G\colon S^1\times M\to\mathbb{R}$ and a non-trivial element $a$ of $QH_\ast(L)$,

\[|c^L(\mathtt{a},F+ G)-c^L(\mathtt{a},F\natural G)|\leq \frac{1}{2}||\{F,G\}||.\]
\end{lem}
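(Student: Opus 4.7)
The plan is to apply the Lipschitz continuity of Lagrangian spectral invariants (Proposition \ref{LZ}(1)) to the two Hamiltonians $F+G$ and $F\natural G$, combined with a pointwise bound on their difference expressed via the Poisson bracket $\{F,G\}$. Observe that
\[
(F+G)(t,x) - (F\natural G)(t,x) = G(t,x) - G(t,(\phi_F^t)^{-1}(x)),
\]
so Proposition \ref{LZ}(1) gives
\[
\bigl|c^L(\mathtt{a}, F+G) - c^L(\mathtt{a}, F \natural G)\bigr| \leq \int_0^1 \max_{x \in M}\bigl|G(t,x) - G(t,(\phi_F^t)^{-1}(x))\bigr|\, dt.
\]

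To estimate the right-hand side, I would differentiate along the flow of $F$: writing $\psi_u = (\phi_F^u)^{-1}$, the identity $\frac{d\psi_u}{du}(x) = -(d\psi_u)_x X_F^u(x)$ leads to
\[
G(t,x) - G(t,\psi_t(x)) = \int_0^t \{F_u, G_t \circ \psi_u\}(x)\, du.
\]
By the symplectic invariance of the Poisson bracket, this equals $\int_0^t \{F_u \circ \phi_F^u, G_t\}(\psi_u(x))\, du$, so taking the supremum in $x$ absorbs the inner $\psi_u$ and yields a bound by $\int_0^t \sup_y |\{F_u \circ \phi_F^u, G_t\}(y)|\, du$.

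Integrating in $t$ over $[0,1]$ and applying Fubini, the triangular region $\{0 \leq u \leq t \leq 1\}$ has measure $\frac{1}{2}$, which produces the factor $\frac{1}{2}$ in the claimed bound. Combined with the zero-mean property of the Poisson bracket on a closed symplectic manifold (so that its pointwise absolute value is controlled by half of its oscillation, matching the Hofer length definition of $\|\cdot\|$), this completes the estimate.

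The main obstacle I anticipate is the appearance of $F_u \circ \phi_F^u$ in place of $F_u$ inside the Poisson bracket, which does not simplify for time-dependent Hamiltonians. I expect this to be handled by first verifying the inequality for autonomous Hamiltonians---where $F \circ \phi_F^u = F$ by energy conservation, so the Poisson bracket reduces cleanly to $\{F,G\}$---and then reducing the general case to this one, either by an approximation argument or by a careful reparameterization of the double integral.
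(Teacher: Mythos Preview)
Your approach is the paper's: Lipschitz continuity reduces the problem to bounding $G_t - G_t\circ(\phi_F^t)^{-1}$, which you estimate by differentiating along the flow of $F$ and then pick up the factor $\tfrac12$ from $\int_0^1 t\,dt$. Two corrections are in order.

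The detour through symplectic invariance is unnecessary. After the substitution $x=\phi_F^t(y)$ the paper computes directly
\[
G_t(\phi_F^t y)-G_t(y)=\int_0^t \{G_t,F_s\}(\phi_F^s y)\,ds,
\]
and taking the supremum over $y$ immediately gives $\|(F+G)_t-(F\natural G)_t\|\le t\sup_{s,z}|\{G_t,F_s\}(z)|$; no term of the form $F_u\circ\phi_F^u$ ever appears. The mixed time indices $\{G_t,F_s\}$ do remain---this is exactly the obstacle you flagged---and the paper simply writes $\{G,F\}$ as if the Hamiltonians were autonomous. Since the lemma is only ever applied to autonomous $F,G$ in the sequel, nothing is lost, but your proposed reduction of the time-dependent case to the autonomous one is not carried out in the paper either.

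Your claim that zero mean bounds $\sup|h|$ by \emph{half} the oscillation is false: take $h$ equal to $1$ on a set of large measure and to a large negative constant on the complement so that the mean vanishes; then $\sup|h|/\mathrm{osc}(h)$ can be made arbitrarily close to $1$. Zero mean only yields $\max h\ge 0\ge\min h$, hence $\sup|h|\le\max h-\min h=\|h\|$. Fortunately this is all you need: the $\tfrac12$ already comes from the triangular integration, and $\sup|\{F,G\}|\le\|\{F,G\}\|$ then gives the claimed bound.
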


\begin{proof}

For any $t>0$ and $x\in M$,
\[G(\phi_F^tx)-G(x)=\int_0^t\frac{d}{ds}G(\phi_F^sx)ds=\int_0^t\{G,F\}(\phi_F^sx)ds\]
Thus, for any $t$,
\[||(F+G)-(F\natural G)_t||=||G-G\circ(\phi_F^t)^{-1}||=||G\circ\phi_F^t-G||\leq t||\{F,G\}||.\]

By the Lipshitz continuity,
\begin{align*}
&c^L(\mathtt{a},F+G)-c^L(\mathtt{a},F\natural G)\\
      &\leq\int_0^1||(F+G)-(F\natural G)_t||dt\\
      &\leq\int_0^1t||\{F,G\}||dt\\
      & = \frac{1}{2}||\{F,G\}||.
\end{align*}

\end{proof}

\begin{proof}[Proof of Theorem \ref{positive poisson}]
Let $\vec{F}$ be a partition of the unity subordinated to $\mathcal{U}$.

Fix a positive number $R$.
Define functions $G_k\colon M\to\mathbb{R}$ ($k=0,\ldots,N$), $H_k\colon \mathbb{R}\to\mathbb{R}$ $(k=1,\ldots,N)$ by $G_k(x)=\sum_{i=1}^kF_i(x)$ and $H_k(R)=c^L([L],RG_k)-c^L([L],RG_{k-1})$. 

Then, by the triangle inequality,
\[c^L([L],R\bar{F}_k)\leq c^L([L],(RG_{k-1})\natural (RF_k))-c^L([L],RG_{k-1})\leq c^L([L],RF_k).\]

Since $\mathrm{Supp}(F_k)\subset U_k$ for any $k=1,\ldots,N$, by Corollary \ref{Hofer Ostrover},
\[-\bar{E}(\mathcal{U};L)\leq c^L([L],(RG_{k-1})\natural (RF_k))-c^L([L],RG_{k-1})\leq \bar{E}(\mathcal{U};L).\]
Hence $|c^L([L],(RG_{k-1})\natural (RF_k))-c^L([L],RG_{k-1})|\leq \bar{E}(\mathcal{U};L)$.

 By Lemma \ref{sum composition deiff} and the definition of $\kappa_{cl}(\vec{F})$,
\begin{align*}
 &|c^L([L],RG_{k-1}+RF_k)-c^L([L],(RG_{k-1})\natural (RF_k))|\\
      &\leq\frac{1}{2}||\{RG_{k-1},RF_k\}||\\
      &=\frac{R^2}{2}||\{G_{k-1},F_k\}||\\
      & \leq \frac{R^2}{2} \kappa_{cl}(\vec{F}).
\end{align*}

Thus, by the triangle inequality,
\begin{align*}
|H_k(R)|&=|c^L([L],RG_k)-c^L([L],RG_{k-1})|\\
      &\leq |c^L([L],RG_{k-1}+RF_k)-c^L([L],(RG_{k-1})\natural (RF_k))|\\
      &+|c^L([L],(RG_{k-1})\natural (RF_k))-c^L([L],RG_{k-1})|\\
      & \leq \frac{R^2}{2} \kappa_{cl}(\vec{F})+\bar{E}(\mathcal{U};L).
\end{align*}

By the triangle inequality,
\begin{align*}
&c([L],R)-c([L],0)\\
      &=H_N(R)+H_{N-1}(R)+\cdots+H_1(R)\\
      & \leq |H_N(R)|+|H_{N-1}(R)|+\cdots+|H_1(R)|\\
      & \leq N(\frac{R^2}{2} \kappa_{cl}(\vec{F})+\bar{E}(\mathcal{U};L)).
\end{align*}
By the Hamiltonian shift property ((4) of Proposition \ref{LZ}), $c([L],R)-c([L],0)=R$.
Thus
\[1\leq N(\frac{R}{2} \kappa_{cl}(\vec{F})+R^{-1}\cdot \bar{E}(\mathcal{U};L)).\]
The right-hand side is minimized by $R=(2\bar{E}(\mathcal{U};L))^{\frac{1}{2}}(\kappa_{cl}(\vec{F}))^{-\frac{1}{2}}$ and hence
\[1\leq N(2\bar{E}(\mathcal{U};L)\cdot\kappa_{cl}(\vec{F}))^{\frac{1}{2}}.\]
Therefore
\[ \kappa_{cl}(\vec{F})\cdot\bar{E}(\mathcal{U};L)\geq (2N^2)^{-1}.\]
By taking the infimum over all partition $\vec{F}$ of unity subordinated to $\mathcal{U}$,
\[\mathrm{pb}(\mathcal{U})\cdot \bar{E}(\mathcal{U};L)\geq (2N^2)^{-1}.\]
\end{proof}

\begin{proof}[Proof of Proposition \ref{disp pb inv}]
Let $F$ be a smooth function such that $F^{-1}(0)=X$.
Since $F^{-1}(0)=X$ is displaceable from $X$, there is a positive number $\epsilon$ such that $F^{-1}((-2\epsilon,2\epsilon))$ is also displaceable from $X$.
Define open subsets $U_1,U_2,U_3$ by $U_1=F^{-1}((-\infty,-\epsilon))$ $U_2=F^{-1}((-2\epsilon,2\epsilon))$ $U_3=F^{-1}((\epsilon,+\infty))$ and set $\mathcal{U}=\{U_1,U_2,U_3\}$.
By the definition, $U_2$ is displaceable from $X$.
$U_1$ and $U_3$ are disjoint from $X$ and in particular, displaceable from $X$.
Since $U_1\cap U_2\cap U_3=\emptyset$, $\mathrm{pb}(\mathcal{U})=0$.
Thus this $\mathcal{U}$ satisfies all of the conditions.
\end{proof}

\section{Properties of partial quasi-state}\label{Properties of partial quasi-state}


Let $L$ be a monotone Lagrangian submanifold of a closed symplectic manifold $(M,\omega)$ with $N_L\geq 2$ and $QH_\ast(L)\neq0$.

$\zeta_\mathtt{a}^L$ satisfies the following properties.

\begin{prop}\label{quasi-state basic}

\begin{description}
\item[(1)\textit{Partial quasi-additivity}]
For Hamiltonian functions $F,G\colon M\to\mathbb{R}$ with $\{F,G\}=0$ and $\mathrm{Supp}(G)$ is displaceable from $\mathrm{Supp}(G)$ itself,
$\zeta_\mathtt{a}^L(F+G)=\zeta_\mathtt{a}^L(F)$.

\item[($1^\prime$)\textit{Lagrangian partial quasi-additivity}]
For Hamiltonian functions $F,G\colon M\to\mathbb{R}$ with $\{F,G\}=0$ and $\mathrm{Supp}(G)$ is displaceable from $L$,
$\zeta_\mathtt{a}^L(F+G)=\zeta_\mathtt{a}^L(F)$.

\item[(2)\textit{Normalization}]
$\zeta_\mathtt{a}^L(1)=1$.

\item[(3)\textit{Semi-homogeneity}]
For any autonomous Hamiltonian function $F\colon M\to\mathbb{R}$ and any positive number $s$,
$\zeta_\mathtt{a}^L(sF)=s\zeta_\mathtt{a}^L(F)$.

\end{description}
\end{prop}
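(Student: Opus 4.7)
The plan is to derive all four properties from the algebraic features of $c^L(\mathtt{a},\cdot)$ collected in Proposition~\ref{LZ}, together with the bounded spectrum results of Section~\ref{Ostrover section}. The idempotency of $\mathtt{a}$ is what makes the triangle inequality (Proposition~\ref{LZ}(3)) subadditive inside the same homology class, and this is the mechanism behind (1), (1'), and (3).

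For the normalization (2), I apply the Hamiltonian shift (Proposition~\ref{LZ}(4)) to the constant $\rho(t)\equiv k$ to obtain $c^L(\mathtt{a},k)=c^L(\mathtt{a},0)+k$; dividing by $k$ and letting $k\to\infty$ yields $\zeta_{\mathtt{a}}^L(1)=1$. For the quasi-additivity statements (1) and (1'), I exploit that since $F$ and $G$ are autonomous with $\{F,G\}=0$, the flow $\phi_F^t$ preserves $G$, whence $F\natural G=F+G$ and more generally $k(F+G)=(kF)\natural(kG)$ and $kF=k(F+G)\natural\overline{kG}$. The triangle inequality with $\mathtt{a}\ast\mathtt{a}=\mathtt{a}$ then gives the two-sided bound
\[c^L(\mathtt{a},kF)-c^L(\mathtt{a},\overline{kG})\leq c^L(\mathtt{a},k(F+G))\leq c^L(\mathtt{a},kF)+c^L(\mathtt{a},kG).\]
Since $G$ is autonomous, $\overline{kG}=-kG$, so both $kG$ and $\overline{kG}$ are supported in a fixed open neighborhood $U$ of $\mathrm{Supp}(G)$. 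Choosing $U$ small enough to be displaceable from $L$ for case (1') or from itself for case (1), Proposition~\ref{Ostrover} or Proposition~\ref{abstract disp bounded spec ham 1} produces a single constant $C$ (independent of $k$) bounding $|c^L(\mathtt{a},kG)|$ and $|c^L(\mathtt{a},\overline{kG})|$. Dividing by $k$ and letting $k\to\infty$ yields $\zeta_{\mathtt{a}}^L(F+G)=\zeta_{\mathtt{a}}^L(F)$.

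For the semi-homogeneity (3), the key observation is that for autonomous $F$ one has $F\natural F=2F$, since $F\circ(\phi_F^t)^{-1}=F$; by induction $F^{\natural k}=kF$ for every positive integer $k$, and hence $(sF)^{\natural n}=nsF$ for all $s>0$ and $n\geq 1$. For a positive integer $k$ this gives
\[\zeta_{\mathtt{a}}^L(kF)=\lim_{n\to\infty}\frac{c^L(\mathtt{a},F^{\natural kn})}{n}=k\zeta_{\mathtt{a}}^L(F),\]
and the same calculation handles positive rationals. The Lipschitz continuity (Proposition~\ref{LZ}(1)) yields $|c^L(\mathtt{a},nsF)-c^L(\mathtt{a},ns'F)|\leq n|s-s'|\max_M|F|$; dividing by $n$ shows $s\mapsto\zeta_{\mathtt{a}}^L(sF)$ is Lipschitz, so the identity extends from positive rationals to all positive reals by density.

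The main obstacle I expect is the uniformity of the bounds in (1) and (1'): one must control $c^L(\mathtt{a},kG)$ and $c^L(\mathtt{a},-kG)$ for the whole family $\{kG\}_{k\geq 1}$, not just for a single Hamiltonian. The autonomy of $G$ is precisely what makes this tractable, because then the support is literally the same for every $k$, so a single displaceable neighborhood $U$ and a single displacing isotopy suffice, and the propositions of Section~\ref{Ostrover section} yield one constant $C$ that works for every $k$ simultaneously.
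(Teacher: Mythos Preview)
Your proof is correct. For (2) and (3) it coincides with the standard argument the paper alludes to (and omits). For (1) and ($1'$), however, you take a genuinely more elementary route than the paper. The paper first develops the Poisson bracket inequality machinery of Section~\ref{PB ineq section} (Propositions~\ref{iroirodasu lemma} and~\ref{pb ineq}, involving the auxiliary quantities $q_{\mathtt{a},\tilde f}(\tilde g)$ and $D(F,G)$), and then in Section~\ref{stem section} applies Proposition~\ref{pb ineq} with $\{F,G\}=0$ to obtain $\zeta_\mathtt{a}^L(F+G)=\zeta_\mathtt{a}^L(F)+\zeta_\mathtt{a}^L(G)$, followed by the vanishing property (Proposition~\ref{vanishing property}) to kill $\zeta_\mathtt{a}^L(G)$. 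You bypass all of Section~\ref{PB ineq section}: from $\{F,G\}=0$ you get $k(F+G)=(kF)\natural(kG)=(k(F+G))\natural(\overline{kG})\natural(kG)$ directly, apply the triangle inequality with $\mathtt{a}\ast\mathtt{a}=\mathtt{a}$, and invoke Propositions~\ref{Ostrover} and~\ref{abstract disp bounded spec ham 1} for the uniform bound on $c^L(\mathtt{a},\pm kG)$. This is cleaner for the result at hand. The paper's longer route is not wasted, though: the Poisson bracket inequality is essential later in Section~\ref{non-deg section} (specifically in Proposition~\ref{nonzero spec 2}), where one must handle $P$ and $K$ that do \emph{not} Poisson-commute, and there your direct argument would not apply.
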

We prove (1) of Proposition \ref{quasi-state basic} and ($1^\prime$) in Section \ref{stem section}.
Th proofs of (2) and (3) of Proposition \ref{quasi-state basic} are quite similar to the corresponding statements on Oh-Schwarz spectral invariants (Theorem 3.6 of \cite{EP09}) and thus we omit the proofs.

To prove Theorem \ref{hv from shv}, we prepare some properties of $\zeta_{\mathtt{a}}^L$.
\begin{prop}\label{hv shv intersect}
Let $L$ be a monotone Lagrangian submanifold of a closed symplectic manifold $(M,\omega)$ with $N_L\geq2$ and $QH_\ast(L)\neq0$ and $\mathtt{a}$ a non-trivial idempotent of $QH_\ast(L)$.
Let $X$, $Y$ be a $(L,\mathtt{a})$-heavy subset, a $(L,\mathtt{a})$-superheavy subset of $M$, respectively.
Then $X\cap Y\neq \emptyset$.
\end{prop}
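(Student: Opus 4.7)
The plan is to follow the standard Entov--Polterovich argument for (super)heavy sets, adapted to the Lagrangian setting, proceeding by contradiction using Urysohn's lemma. First I would suppose $X\cap Y=\emptyset$; since by Definition \ref{definition of heavy} both $X$ and $Y$ are closed subsets of the (metrizable, hence normal) manifold $M$, Urysohn's lemma produces a continuous function $h\colon M\to[0,1]$ with $h\equiv 1$ on $X$ and $h\equiv 0$ on $Y$. Pulling $h$ back to $S^1\times M$ gives a time-independent continuous function $H\colon S^1\times M\to\mathbb{R}$, which is admissible as an input to $\zeta_{\mathtt{a}}^L$.

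Next I would apply the two defining inequalities from Definition \ref{definition of heavy} to this $H$. Heaviness of $X$ yields
\[
\zeta_{\mathtt{a}}^L(H)\;\geq\;\inf_{S^1\times X}H\;=\;1,
\]
while superheaviness of $Y$ yields
\[
\zeta_{\mathtt{a}}^L(H)\;\leq\;\sup_{S^1\times Y}H\;=\;0.
\]
Chaining these gives $1\leq 0$, a contradiction; hence $X\cap Y\neq\emptyset$.

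There is essentially no obstacle here beyond verifying the definitional setup, because Definition \ref{definition of heavy} already allows continuous (not just smooth) test Hamiltonians, so no smoothing step is needed. The only very mild point to check is that the hypothesis $\mathtt{a}\neq 0$ is used implicitly through the existence of the functional $\zeta_{\mathtt{a}}^L$ itself (this is what makes $c^L(\mathtt{a},\cdot)$ a finite real number, cf.\ Section \ref{prel section}), ensuring the two inequalities above are not vacuously satisfied by $\zeta_{\mathtt{a}}^L$ being identically $\pm\infty$. Once that is noted, the contradiction $1\leq 0$ closes the argument.
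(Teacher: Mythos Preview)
Your argument is correct and is precisely the standard Entov--Polterovich proof to which the paper defers (the paper omits the proof, citing (iii) of Theorem~1.4 in \cite{EP09}). No changes are needed.
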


The proof of Proposition \ref{hv shv intersect} is quite similar to the proof of (iii) of Theorem 1.4 in \cite{EP09} and thus we omit the proof.

\begin{lem}\label{conj invariance}
For any continuous function $F\colon S^1\times M\to\mathbb{R}$ and a Hamiltonian diffeomorphism $\psi$,
\[\zeta^L_{\mathtt{a}}(F\circ\psi)=\zeta^L_{\mathtt{a}}(F),\]
where $F\circ\psi$ is a function defined by
\[F\circ\psi(t,x)=F(t,\psi(x)).\]
\end{lem}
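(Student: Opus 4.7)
The plan is to exhibit a bound $|c^L(\mathtt{a},(F\circ\psi)^{\natural k})-c^L(\mathtt{a},F^{\natural k})|\leq C$ in which the constant $C$ depends only on $\psi$ and not on $k$; dividing by $k$ and sending $k\to\infty$ then gives $\zeta_{\mathtt{a}}^L(F\circ\psi)=\zeta_{\mathtt{a}}^L(F)$.

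First I would write $\psi=\phi_G^1$ for some smooth Hamiltonian $G$ and observe, by a short induction on $k$ using only $\phi_{F\circ\psi}^t=\psi^{-1}\phi_F^t\psi$ and the definition of $\natural$, that $(F\circ\psi)^{\natural k}=F^{\natural k}\circ\psi$ holds as an identity of functions on $S^1\times M$. Hence it suffices to prove the uniform bound
\[|c^L(\mathtt{a},H\circ\psi)-c^L(\mathtt{a},H)|\leq c^L([L],G)+c^L([L],\bar{G})\]
for every Hamiltonian $H\colon S^1\times M\to\mathbb{R}$, and then apply it to $H=F^{\natural k}$.

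To get such a bound I would introduce the auxiliary Hamiltonian $\bar{G}\natural H\natural G$ and compare it with both $H\circ\psi$ and $H$. On one hand, $H\circ\psi$ and $\bar{G}\natural H\natural G$ generate Hamiltonian isotopies sharing the time-$1$ map $\psi^{-1}\phi_H^1\psi$, and these isotopies are homotopic rel endpoints through the Hamiltonian family $s\mapsto\{(\phi_G^{s+(1-s)t})^{-1}\phi_H^t\phi_G^{s+(1-s)t}\}_{t\in[0,1]}$; moreover their time-dependent integrals over $M$ coincide, because $\psi$ preserves $\omega^m$ and because $\int_M\bar{G}_t\,\omega^m=-\int_M G_t\,\omega^m$ makes the $G$ and $\bar{G}$ contributions to $\bar{G}\natural H\natural G$ cancel. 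Normalizing both and applying the homotopy invariance of Proposition~\ref{LZ}(2), then un-normalizing via the Hamiltonian shift of Proposition~\ref{LZ}(4), yields $c^L(\mathtt{a},H\circ\psi)=c^L(\mathtt{a},\bar{G}\natural H\natural G)$. On the other hand, the exact functional identity $G\natural(\bar{G}\natural H\natural G)\natural\bar{G}=H$ holds, again because both sides generate the isotopy $\phi_H^t$ (so differ by a time-dependent constant) and share the same time-dependent mean; feeding this identity into the triangle inequality of Proposition~\ref{LZ}(3) in both directions, and using that $[L]$ is the unit of $QH_\ast(L)$, yields $|c^L(\mathtt{a},\bar{G}\natural H\natural G)-c^L(\mathtt{a},H)|\leq c^L([L],G)+c^L([L],\bar{G})$. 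Combining the two comparisons gives the displayed uniform bound.

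I expect the main obstacle to be establishing the equality $c^L(\mathtt{a},H\circ\psi)=c^L(\mathtt{a},\bar{G}\natural H\natural G)$: homotopy invariance is only stated for normalized Hamiltonians, and a priori the normalization shifts for the two Hamiltonians could differ by an $H$-dependent amount; the cancellation hinges crucially on the volume-preservation of $\psi$ together with the identity $\int_M G_t\,\omega^m+\int_M\bar{G}_t\,\omega^m=0$. For general continuous $F$, the whole argument would be carried out on a smooth approximating sequence and transferred to the limit via the Lipschitz continuity of Proposition~\ref{LZ}(1).
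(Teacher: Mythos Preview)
Your proposal is correct and is essentially the standard argument: the paper itself omits the proof, saying only that it is ``quite similar to the proof of 2 of Theorem 1.8 in \cite{MVZ}'', and your approach---reducing to a $k$-independent bound via $(F\circ\psi)^{\natural k}=F^{\natural k}\circ\psi$, identifying $c^L(\mathtt{a},H\circ\psi)$ with $c^L(\mathtt{a},\bar{G}\natural H\natural G)$ by homotopy invariance, and then sandwiching with the triangle inequality---is exactly that argument. One minor simplification: the identity $G\natural(\bar{G}\natural H\natural G)\natural\bar{G}=H$ in fact holds \emph{literally} as functions (since $G\natural\bar{G}=0$ by direct computation and $\natural$ is associative), so you need not invoke the mean-cancellation there.
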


The proof of Proposition \ref{conj invariance} is quite similar to the proof of 2 of Theorem 1.8 in \cite{MVZ} and thus we omit the proof.

By the definition of heaviness and superheaviness,
Lemma \ref{conj invariance} implies the following corollary.

\begin{cor}\label{hv moved}
Let $L$ be a monotone Lagrangian submanifold of a closed symplectic manifold $(M,\omega)$ with $N_L\geq2$ and $QH_\ast(L)\neq0$, $\mathtt{a}$ a non-trivial idempotent of $QH_\ast(L)$ and $X$ a $(L,\mathtt{a})$-heavy subset.
Then $\psi(X)$ is a $(L,\mathtt{a})$-heavy subset for any Hamiltonian diffeomorphism $\psi$.
\end{cor}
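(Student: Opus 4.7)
The plan is to derive Corollary \ref{hv moved} as a direct consequence of Lemma \ref{conj invariance} together with the definition of $(L,\mathtt{a})$-heaviness (Definition \ref{definition of heavy}). The key observation is that translating the ``test subset'' $X$ by a Hamiltonian diffeomorphism $\psi$ can be converted into precomposing the test Hamiltonian $H$ with $\psi$, and the latter operation leaves the asymptotic spectral invariant $\zeta^L_{\mathtt{a}}$ unchanged by Lemma \ref{conj invariance}.

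Concretely, I would argue as follows. Fix an arbitrary continuous function $H\colon S^1\times M\to\mathbb{R}$ and set $\widetilde{H}=H\circ\psi$, where $\widetilde{H}(t,x)=H(t,\psi(x))$ (note that $\psi$ is time-independent, so this is well-defined). Since $\psi$ is a bijection of $M$, a change of variables yields
\[
\inf_{S^1\times X}\widetilde{H}=\inf_{(t,x)\in S^1\times X}H(t,\psi(x))=\inf_{S^1\times\psi(X)}H.
\]
Because $X$ is $(L,\mathtt{a})$-heavy, applying Definition \ref{definition of heavy} to the continuous function $\widetilde{H}$ gives
\[
\zeta_{\mathtt{a}}^L(\widetilde{H})\geq\inf_{S^1\times X}\widetilde{H}=\inf_{S^1\times\psi(X)}H.
\]

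Finally, Lemma \ref{conj invariance} yields $\zeta_{\mathtt{a}}^L(\widetilde{H})=\zeta_{\mathtt{a}}^L(H\circ\psi)=\zeta_{\mathtt{a}}^L(H)$, so combining this with the previous inequality gives $\zeta_{\mathtt{a}}^L(H)\geq\inf_{S^1\times\psi(X)}H$ for every continuous $H$. By Definition \ref{definition of heavy}, this is precisely the statement that $\psi(X)$ is $(L,\mathtt{a})$-heavy. There is no real obstacle here: the entire content has been absorbed into Lemma \ref{conj invariance}, and the corollary is simply its transcription in terms of subsets rather than functions. The only small point to be careful about is that $\psi$ is a (time-independent) Hamiltonian diffeomorphism, so the composition $H\circ\psi$ is the one to which Lemma \ref{conj invariance} literally applies; the $S^1$-parameter plays no role in this step.
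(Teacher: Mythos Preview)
Your argument is correct and is exactly the approach indicated in the paper: the paper simply asserts that the corollary follows immediately from Lemma~\ref{conj invariance} together with the definition of heaviness, and your write-up spells out precisely that deduction.
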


\begin{proof}[Proof of Theorem \ref{hv from shv}]
To prove by contradiction, suppose that there exists a Hamiltonian diffeomorphism $\psi$ such that
\[\psi(X)\cap Y=\emptyset.\]
By Corollary \ref{hv moved}, $\psi(X)$ is $(L,\mathtt{a})$-heavy.
Since $Y$ is $(L,\mathtt{a})$-superheavy, this contradicts Proposition \ref{hv shv intersect}.

\end{proof}

\section{Poisson bracket inequality on  Lagrangian spectral invariants}\label{PB ineq section}
A large difference between our work and Entov and Polterovich's original argument is that
Lagrangian spectral invariants are not invariant under the action of Hamiltonian diffeomorphisms.
We need a more complicated argument than the Entov and Polterovich's original one.

In this section, we use spectral invariants defined on the universal covering of the group of Hamiltonian diffeomorphisms.
To define them, we prepare some notions.

Let $(M,\omega)$ be a symplectic manifold.
Let $\widetilde{\mathrm{Ham}}(M,\omega)$ be the universal covering of the group $\mathrm{Ham}(M,\omega)$ of Hamiltonian diffeomorphisms.
Note that an element of $\widetilde{\mathrm{Ham}}(M,\omega)$ is represented by a path in $\mathrm{Ham}(M,\omega)$ starting from the identity.
For a Hamiltonian function $H\colon S^1\times M\to\mathbb{R}$ with compact support, let $\tilde{\phi}_H$ denote an element of $\widetilde{\mathrm{Ham}}(M,\omega)$ represented by the path $\{\phi_H^t\}_{t\in[0,1]}$ in $\mathrm{Ham}(M,\omega)$.

For a non-trivial element $\mathtt{a}$ of $QH_\ast(L)$ and an element $\tilde{\phi}$ of $\widetilde{\mathrm{Ham}}(M,\omega)$, we define its spectral invariant $c^L(\mathtt{a},\tilde{\phi})$ by
\[c^L(\mathtt{a},\tilde{\phi})=c^L(\mathtt{a},H),\]
where $H\colon S^1\times M\to\mathbb{R}$ is a normalized Hamiltonian function with compact support such that $\tilde{\phi}_H=\tilde{\phi}$.
By the homotopy invariance ((2) of Proposition \ref{LZ}),
$c^L(\mathtt{a},H)$ does not depend on the choice of $H$ and hence $c^L(\mathtt{a},\tilde{\phi})$ is well-defined.
For normalized Hamiltonian functions $F,G\colon S^1\times M\to\mathbb{R}$, $F\natural G$ is also a normalized Hamiltonian function.
Thus, the following triangle inequality follows from (3) of Proposition \ref{LZ}.
\begin{prop}\label{diffeo tri}
$c^L(\mathtt{a}\ast{\mathtt{b}},\tilde{\phi}\tilde{\psi}) \leq c^L(\mathtt{a},\tilde{\phi})+c^L(\mathtt{b},\tilde{\psi})$ for any  $\tilde{\phi},\tilde{\psi}\in\widetilde{\mathrm{Ham}}(M,\omega)$ and any $\mathtt{a},\mathtt{b}\in QH_\ast(L)$.
\end{prop}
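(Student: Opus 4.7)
The plan is to reduce Proposition \ref{diffeo tri} directly to the Hamiltonian triangle inequality, namely (3) of Proposition \ref{LZ}, by choosing normalized Hamiltonian representatives for $\tilde{\phi}$ and $\tilde{\psi}$ and checking that their product in $\widetilde{\mathrm{Ham}}(M,\omega)$ is generated by the $\natural$-concatenation of their generating Hamiltonians.

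Concretely, I would proceed as follows. First, pick normalized Hamiltonian functions $F,G\colon S^1\times M\to\mathbb{R}$ with compact support such that $\tilde{\phi}_F=\tilde{\phi}$ and $\tilde{\phi}_G=\tilde{\psi}$; such $F$ and $G$ exist since every element of $\widetilde{\mathrm{Ham}}(M,\omega)$ is represented by some smooth Hamiltonian isotopy starting from the identity, and one can always subtract off a purely time-dependent function (which changes nothing in the universal cover) to achieve normalization. Second, I would verify that $F\natural G$ is again normalized: from the identity $(F\natural G)(t,x)=F(t,x)+G(t,(\phi_F^t)^{-1}(x))$ and the fact that $(\phi_F^t)^{-1}$ preserves the volume form $\omega^m$ (being a symplectomorphism), we have $\int_M G_t\circ(\phi_F^t)^{-1}\,\omega^m=\int_M G_t\,\omega^m=0$, so $\int_M(F\natural G)_t\,\omega^m=0$. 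Third, I would note that $\phi_{F\natural G}^t=\phi_F^t\phi_G^t$ so that the path $\{\phi_{F\natural G}^t\}_{t\in[0,1]}$ represents the product $\tilde{\phi}\tilde{\psi}$ in $\widetilde{\mathrm{Ham}}(M,\omega)$, giving $\tilde{\phi}_{F\natural G}=\tilde{\phi}\tilde{\psi}$.

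With these three facts in hand, the conclusion is immediate: by the definition of the spectral invariant on $\widetilde{\mathrm{Ham}}(M,\omega)$ together with the homotopy invariance property (Proposition \ref{LZ} (2)) which ensures well-definedness,
\[
c^L(\mathtt{a}\ast\mathtt{b},\tilde{\phi}\tilde{\psi})=c^L(\mathtt{a}\ast\mathtt{b},F\natural G)\leq c^L(\mathtt{a},F)+c^L(\mathtt{b},G)=c^L(\mathtt{a},\tilde{\phi})+c^L(\mathtt{b},\tilde{\psi}),
\]
where the middle inequality is exactly Proposition \ref{LZ} (3).

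There is essentially no hard step here; the only non-trivial checks are the normalization of $F\natural G$ (which uses the symplectic invariance of $\omega^m$) and the identification of $\tilde{\phi}\tilde{\psi}$ with $\tilde{\phi}_{F\natural G}$ in the universal cover. These are both standard, so the proposition is genuinely a corollary of (2) and (3) of Proposition \ref{LZ} after passing to normalized representatives, and I would present it as such in one short paragraph.
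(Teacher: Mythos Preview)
Your proposal is correct and follows exactly the approach the paper takes: the paper simply remarks that for normalized Hamiltonians $F,G$ the concatenation $F\natural G$ is again normalized, and then states that Proposition \ref{diffeo tri} follows from (3) of Proposition \ref{LZ}. Your write-up just fills in the routine verifications (volume-preservation for normalization, and $\tilde{\phi}_{F\natural G}=\tilde{\phi}\tilde{\psi}$) that the paper leaves implicit.
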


For a non-trivial idempotent $\mathtt{a}$ of $QH_\ast(L)$, we define the homogenization $\sigma_{\mathtt{a}}^L\colon$ $\widetilde{\mathrm{Ham}}(M,\omega)\to\mathbb{R}$ of $c^L(\mathtt{a},\cdot)$ by
\[\sigma_{\mathtt{a}}^L(\tilde{\phi})=\lim_{k\to+\infty}\frac{c^L(\mathtt{a},\tilde{\phi}^k)}{k}.\]
By Proposition \ref{diffeo tri}, we can prove the existence of this limit.

For a non-trivial element $\mathtt{a}$ of $QH_\ast(L)$ and elements $\tilde{f},\tilde{g}$ of $\widetilde{\mathrm{Ham}}(M,\omega)$, we define the following invariant $q_{\mathtt{a},\tilde{f}}(\tilde{g})$ by
\[q_{\mathtt{a},\tilde{f}}(\tilde{g})=\sup_{k\in\mathbb{Z}}\{c^L(\mathtt{a},\tilde{f}^{-k}\tilde{g} \tilde{f}^k)\}+\sup_{k\in\mathbb{Z}}\{c^L(\mathtt{a},\tilde{f}^{-k}\tilde{g}^{-1}\tilde{f}^k)\}.\]
For simplicity, let $q_{\mathtt{a}}(\tilde{g})$ denote $q_{\mathtt{a},\mathrm{id}}(\tilde{g})$.
The author does not know whether $q_{\mathtt{a},\tilde{f}}(\tilde{g})<+\infty$ for any $\tilde{f},\tilde{g}\in\widetilde{\mathrm{Ham}}(M,\omega)$.

Then we can prove the following proposition which generalizes Proposition 3.5.3 in \cite{PR}.

\begin{prop}\label{iroirodasu lemma}
Let $L$ be a monotone Lagrangian submanifold of a closed symplectic manifold $(M,\omega)$ with $N_L\geq2$ and $QH_\ast(L)\neq0$.

Then, for any non-trivial idempotent $\mathtt{a}$ of $QH_\ast(L)$ and any $\tilde{f},\tilde{g}\in\widetilde{\mathrm{Ham}}(M,\omega)$,
\[|\sigma_{\mathtt{a}}^L(\tilde{f}\tilde{g})-\sigma_{\mathtt{a}}^L(\tilde{f})-\sigma_{\mathtt{a}}^L(\tilde{g})|\leq q_{\mathtt{a},\tilde{f}}(\tilde{g}).\]
In particular, $q_{\mathtt{a},\tilde{f}}(\tilde{g})\geq0$.
\end{prop}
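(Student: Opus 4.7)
The plan is to split the claim into an upper and a lower bound on $\sigma^L_{\mathtt{a}}(\tilde{f}\tilde{g})-\sigma^L_{\mathtt{a}}(\tilde{f})-\sigma^L_{\mathtt{a}}(\tilde{g})$. Both rest on the iterated triangle inequality of Proposition~\ref{diffeo tri}, which for the idempotent $\mathtt{a}=\mathtt{a}\ast\mathtt{a}$ becomes the subadditivity $c^L(\mathtt{a},\tilde{\phi}\tilde{\psi})\leq c^L(\mathtt{a},\tilde{\phi})+c^L(\mathtt{a},\tilde{\psi})$, applied to the telescoping identity
\[
(\tilde{f}\tilde{g})^k=\tilde{f}^k\cdot\tilde{g}_{k-1}\tilde{g}_{k-2}\cdots\tilde{g}_0,\qquad \tilde{g}_j:=\tilde{f}^{-j}\tilde{g}\tilde{f}^{j},
\]
which is verified by a short induction on $k$. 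Write $A:=\sup_{k\in\mathbb{Z}}c^L(\mathtt{a},\tilde{f}^{-k}\tilde{g}\tilde{f}^k)$ and $B:=\sup_{k\in\mathbb{Z}}c^L(\mathtt{a},\tilde{f}^{-k}\tilde{g}^{-1}\tilde{f}^k)$, so $q_{\mathtt{a},\tilde{f}}(\tilde{g})=A+B$; we may assume $A,B<+\infty$ else the inequality is vacuous.

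Subadditivity applied to the decomposition yields $c^L(\mathtt{a},(\tilde{f}\tilde{g})^k)\leq c^L(\mathtt{a},\tilde{f}^k)+\sum_{j=0}^{k-1}c^L(\mathtt{a},\tilde{g}_j)\leq c^L(\mathtt{a},\tilde{f}^k)+kA$; dividing by $k$ and taking the limit gives $\sigma^L_{\mathtt{a}}(\tilde{f}\tilde{g})\leq\sigma^L_{\mathtt{a}}(\tilde{f})+A$. Inverting the decomposition as $\tilde{f}^k=(\tilde{f}\tilde{g})^k\cdot\tilde{g}_0^{-1}\tilde{g}_1^{-1}\cdots\tilde{g}_{k-1}^{-1}$ and applying subadditivity again gives $c^L(\mathtt{a},\tilde{f}^k)\leq c^L(\mathtt{a},(\tilde{f}\tilde{g})^k)+kB$, hence $\sigma^L_{\mathtt{a}}(\tilde{f}\tilde{g})\geq\sigma^L_{\mathtt{a}}(\tilde{f})-B$. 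To close these into the claimed bound, I need the auxiliary estimates $\sigma^L_{\mathtt{a}}(\tilde{g})\leq A$ and $-\sigma^L_{\mathtt{a}}(\tilde{g})\leq B$. Subadditivity plus Fekete's lemma give $\sigma^L_{\mathtt{a}}(\tilde{\phi})=\inf_n c^L(\mathtt{a},\tilde{\phi}^n)/n\leq c^L(\mathtt{a},\tilde{\phi})$, and $\sigma^L_{\mathtt{a}}$ is conjugation-invariant: from $(\tilde{\psi}\tilde{\phi}\tilde{\psi}^{-1})^k=\tilde{\psi}\tilde{\phi}^k\tilde{\psi}^{-1}$ together with subadditivity, the quotient $c^L(\mathtt{a},(\tilde{\psi}\tilde{\phi}\tilde{\psi}^{-1})^k)/k$ and $c^L(\mathtt{a},\tilde{\phi}^k)/k$ differ by a vanishing error, and exchanging roles of $\tilde{\phi}$ and its conjugate gives equality of their limits. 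Hence $\sigma^L_{\mathtt{a}}(\tilde{g})=\sigma^L_{\mathtt{a}}(\tilde{g}_j)\leq c^L(\mathtt{a},\tilde{g}_j)$ for every $j$, so $\sigma^L_{\mathtt{a}}(\tilde{g})\leq A$. The same argument applied to $\tilde{g}^{-1}$ gives $\sigma^L_{\mathtt{a}}(\tilde{g}^{-1})\leq B$, and subadditivity applied to $\tilde{g}^n\tilde{g}^{-n}=\mathrm{id}$ yields $c^L(\mathtt{a},\tilde{g}^n)+c^L(\mathtt{a},\tilde{g}^{-n})\geq c^L(\mathtt{a},\mathrm{id})$; dividing by $n$ and taking the limit gives $\sigma^L_{\mathtt{a}}(\tilde{g})+\sigma^L_{\mathtt{a}}(\tilde{g}^{-1})\geq 0$, whence $-\sigma^L_{\mathtt{a}}(\tilde{g})\leq\sigma^L_{\mathtt{a}}(\tilde{g}^{-1})\leq B$. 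Combining these four ingredients proves the claimed inequality, and $q_{\mathtt{a},\tilde{f}}(\tilde{g})\geq 0$ is then automatic from the nonnegativity of the absolute value on the left.

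The main obstacle is the asymmetric appearance of $\tilde{f}$ and $\tilde{g}$ in $q_{\mathtt{a},\tilde{f}}(\tilde{g})$: only conjugations by powers of $\tilde{f}$ are allowed, so one is forced to use a decomposition that isolates a clean $\tilde{f}^k$ factor and handles the $\tilde{g}$-data exclusively through its $\tilde{f}$-conjugates $\tilde{g}_j$. In the Hamiltonian setting of Proposition~3.5.3 of \cite{PR}, conjugation-invariance of the spectral invariant $c$ itself collapses such manipulations; here, because $c^L$ lacks conjugation-invariance under arbitrary Hamiltonian diffeomorphisms, the estimate $\sigma^L_{\mathtt{a}}(\tilde{g})\leq A$ (and its $B$-counterpart) must be obtained indirectly, invoking conjugation-invariance only at the homogenized level $\sigma^L_{\mathtt{a}}$ and routing through $\sigma^L_{\mathtt{a}}(\tilde{g}^{-1})$ via Fekete subadditivity. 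This detour is precisely the extra subtlety the author's introductory remarks about the Lagrangian case emphasize.
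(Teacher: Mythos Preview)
Your proof is correct and follows essentially the same route as the paper's: telescope $(\tilde{f}\tilde{g})^k$ into $\tilde{f}^k$ times a product of $\tilde{f}$-conjugates of $\tilde{g}$, apply the triangle inequality (Proposition~\ref{diffeo tri}) repeatedly, and pass to the limit. One minor simplification: the conjugation-invariance of $\sigma^L_{\mathtt{a}}$ you invoke to get $\sigma^L_{\mathtt{a}}(\tilde{g})\leq A$ is not needed, since the index $k=0$ already sits in the supremum defining $A$, so Fekete alone gives $\sigma^L_{\mathtt{a}}(\tilde{g})\leq c^L(\mathtt{a},\tilde{g})\leq A$ directly (and likewise for $B$); the paper avoids this detour by bounding $c^L(\mathtt{a},(\tilde{f}\tilde{g})^k)-c^L(\mathtt{a},\tilde{f}^k)-c^L(\mathtt{a},\tilde{g}^k)$ at the finite-$k$ level before dividing and taking the limit.
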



\begin{proof}

Note that
\[(\tilde{f}\tilde{g})^k=\tilde{h}\tilde{f}^k\text{, where }\tilde{h}=(\tilde{f}\tilde{g}\tilde{f}^{-1})(\tilde{f}^2\tilde{g}\tilde{f}^{-2})\cdots(\tilde{f}^k\tilde{g}\tilde{f}^{-k}).\]
Thus, by Proposition \ref{diffeo tri},
\begin{align*}
&c^L(\mathtt{a},(\tilde{f}\tilde{g})^k)\leq c^L(\mathtt{a},\tilde{h})+c^L(\mathtt{a},\tilde{f}^k),\\
&c^L(\mathtt{a},(\tilde{f}\tilde{g})^k)\geq c^L(\mathtt{a},\tilde{f}^k)-c^L(\mathtt{a},\tilde{h}^{-1}).
\end{align*}
Thus
\[-c^L(\mathtt{a},\tilde{h}^{-1})-c^L(\mathtt{a},\tilde{g}^k)\leq c^L(\mathtt{a},(\tilde{f}\tilde{g})^k)-c^L(\mathtt{a},\tilde{f}^k)-c^L(\mathtt{a},\tilde{g}^k)\leq c^L(\mathtt{a},\tilde{h})-c^L(\mathtt{a},\tilde{g}^k).\]



Since $\tilde{h}^{-1}=(\tilde{f}^{k}\tilde{g}^{-1}\tilde{f}^{-k})\cdots(\tilde{f}^{2}\tilde{g}^{-1}\tilde{f}^{-2})(\tilde{f}^1\tilde{g}^{-1}\tilde{f}^{-1})$, by Proposition \ref{diffeo tri} and the definition of $q_{\mathtt{a},\tilde{f}}(\tilde{g})$,
\begin{align*}
&c^L(\mathtt{a},\tilde{h}^{-1})+c^L(\mathtt{a},\tilde{g}^k)\\
&\leq c^L(\mathtt{a},\tilde{f}^{k}\tilde{g}^{-1}\tilde{f}^{-k})+\cdots+c^L(\mathtt{a},\tilde{f}^{2}\tilde{g}^{-1}\tilde{f}^{-2})+c^L(\mathtt{a},\tilde{f}^1\tilde{g}^{-1}\tilde{f}^{-1})+kc^L(\mathtt{a},\tilde{g})\\
&\leq \sum_{i=-k}^{-1}(c^L(\mathtt{a},\tilde{f}^{-i}\tilde{g}^{-1}\tilde{f}^i)+c^L(\mathtt{a},\tilde{g}))\\
&\leq kq_{\mathtt{a},\tilde{f}}(\tilde{g}).
\end{align*}
By Proposition \ref{diffeo tri},
\[c^L(\mathtt{a},\mathrm{id})-c^L(\mathtt{a},\tilde{g}^k)\leq c^L(\mathtt{a},\tilde{g}^{-k})\leq kc^L(\mathtt{a},\tilde{g}^{-1}) .\]
Hence $-c^L(\mathtt{a},\tilde{g}^k)\leq kc^L(\mathtt{a},\tilde{g}^{-1})-c^L(\mathtt{a},\mathrm{id})$.
Thus, by Proposition \ref{diffeo tri} and the definitions of $\tilde{h}$ and $q_{\mathtt{a},\tilde{f}}(\tilde{g})$.
\begin{align*}
&c^L(\mathtt{a},\tilde{h})-c^L(\mathtt{a},\tilde{g}^k)\\
&\leq c^L(\mathtt{a},\tilde{f}^{-1}\tilde{g}\tilde{f})+c^L(\mathtt{a},\tilde{f}^{-2}\tilde{g}\tilde{f}^2)+\cdots+c^L(\mathtt{a},\tilde{f}^{-k}\tilde{g}\tilde{f}^k)+kc^L(\mathtt{a},\tilde{g}^{-1})-c^L(\mathtt{a},\mathrm{id})\\
&\leq \sum_{i=1}^k(c^L(\mathtt{a},\tilde{f}^{-i}\tilde{g}\tilde{f}^i)+c^L(\mathtt{a},\tilde{g}^{-1}))-c^L(\mathtt{a},\mathrm{id})\\
&\leq kq_{\mathtt{a},\tilde{f}}(\tilde{g})-c^L(\mathtt{a},\mathrm{id}).
\end{align*}
Thus

\[-kq_{\mathtt{a},\tilde{f}}(\tilde{g})\leq c^L(\mathtt{a},(\tilde{f}\tilde{g})^k)-c^L(\mathtt{a},\tilde{f}^k)-c^L(\mathtt{a},\tilde{g}^k)\leq kq_{\mathtt{a},\tilde{f}}(\tilde{g})-c^L(\mathtt{a},\mathrm{id}).\]
By dividing by $k$ and passing to the limit as $k\to+\infty$, we complete the  proof.

\end{proof}

For autonomous Hamiltonian functions $F,G\colon M\to\mathbb{R}$, we define $D(F,G)$ by
\[D(F,G)=\min\{\sup_{t\in\mathbb{R}}q_{\mathtt{a},\tilde{\phi}_F}(\tilde{\phi}_{tG}),\sup_{t\in\mathbb{R}}q_{\mathtt{a},\tilde{\phi}_G}(\tilde{\phi}_{tF})\}.\]
By Proposition \ref{iroirodasu lemma}, $D(F,G)\geq0$ or $D(F,G)=+\infty$.

We give a generalization of Poisson bracket inequality.
\begin{prop}\label{pb ineq}
Let $L$ be a monotone Lagrangian submanifold of a closed symplectic manifold $(M,\omega)$ with $N_L\geq2$ and $QH_\ast(L)\neq0$.
Then, for any non-trivial idempotent $\mathtt{a}$ of $QH_\ast(L)$ and any Hamiltonian functions $F,G\colon M\to\mathbb{R}$ with $D(F,G)<+\infty$,
\[|\zeta_{\mathtt{a}}^L(F+G)-\zeta_{\mathtt{a}}^L(F)-\zeta_{\mathtt{a}}^L(G)|\leq (2D(F,G)\cdot ||\{F,G\}||)^{1/2}.\]
\end{prop}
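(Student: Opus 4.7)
\medskip

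The plan is to follow the strategy of Proposition 4.6.1 of Polterovich--Rosen \cite{PR}, substituting the weaker quasimorphism-like estimate from Proposition \ref{iroirodasu lemma} for the genuine quasimorphism defect used there. The argument has three ingredients: a Lipschitz bound on $c^L$ coming from Lemma \ref{sum composition deiff}, the defect bound of Proposition \ref{iroirodasu lemma}, and a scaling optimization; they combine to produce the square-root bound.

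\medskip

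\textbf{Step 1: Reduction to $\sigma_{\mathtt{a}}^L$.} Since $F$, $G$ and $F+G$ are autonomous, $H^{\natural k}=kH$, and by the Hamiltonian shift property (Proposition \ref{LZ}(4)) we have
\[
\zeta_{\mathtt{a}}^L(H)=\sigma_{\mathtt{a}}^L(\tilde\phi_H)+\mathrm{vol}(M)^{-1}\int_M H\,\omega^m
\]
for each such $H$. The mean integral is additive in $H$, so it cancels in the defect:
\[
\zeta_{\mathtt{a}}^L(F+G)-\zeta_{\mathtt{a}}^L(F)-\zeta_{\mathtt{a}}^L(G)=\sigma_{\mathtt{a}}^L(\tilde\phi_{F+G})-\sigma_{\mathtt{a}}^L(\tilde\phi_F)-\sigma_{\mathtt{a}}^L(\tilde\phi_G).
\]

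\medskip

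\textbf{Step 2: Two estimates for each positive integer $N$.} Consider $\tilde\phi_F^N\tilde\phi_G^N=\tilde\phi_{NF}\tilde\phi_{NG}$, generated by $(NF)\natural(NG)$. Lemma \ref{sum composition deiff} applied to $NF$, $NG$ gives
\[
\bigl|c^L(\mathtt{a},N(F+G))-c^L(\mathtt{a},(NF)\natural(NG))\bigr|\leq \tfrac12 N^2\|\{F,G\}\|.
\]
Since both Hamiltonians have the same mean, this also bounds the $c^L$-defect of the corresponding elements of $\widetilde{\mathrm{Ham}}$; iterating on $k$-fold powers and passing to the homogenized limit promotes this to
\[
\bigl|\sigma_{\mathtt{a}}^L(\tilde\phi_{F+G}^N)-\sigma_{\mathtt{a}}^L(\tilde\phi_F^N\tilde\phi_G^N)\bigr|\leq \tfrac12 N^2\|\{F,G\}\|,
\]
and homogeneity gives $\sigma_{\mathtt{a}}^L(\tilde\phi_{F+G}^N)=N\sigma_{\mathtt{a}}^L(\tilde\phi_{F+G})$. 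On the other hand, Proposition \ref{iroirodasu lemma} with $\tilde f=\tilde\phi_F^N$, $\tilde g=\tilde\phi_G^N$ gives
\[
\bigl|\sigma_{\mathtt{a}}^L(\tilde\phi_F^N\tilde\phi_G^N)-N\sigma_{\mathtt{a}}^L(\tilde\phi_F)-N\sigma_{\mathtt{a}}^L(\tilde\phi_G)\bigr|\leq q_{\mathtt{a},\tilde\phi_F^N}(\tilde\phi_G^N).
\]
A direct comparison of the defining suprema shows $q_{\mathtt{a},\tilde\phi_F^N}(\tilde\phi_G^N)\leq q_{\mathtt{a},\tilde\phi_F}(\tilde\phi_{NG})$ (the conjugating set $N\mathbb{Z}$ is contained in $\mathbb{Z}$), and this in turn is bounded by $\sup_{t\in\mathbb{R}}q_{\mathtt{a},\tilde\phi_F}(\tilde\phi_{tG})$; taking the symmetric bound gives $q_{\mathtt{a},\tilde\phi_F^N}(\tilde\phi_G^N)\leq D(F,G)$.

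\medskip

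\textbf{Step 3: Optimize over $N$.} Combining the two estimates by the triangle inequality,
\[
\bigl|N\sigma_{\mathtt{a}}^L(\tilde\phi_{F+G})-N\sigma_{\mathtt{a}}^L(\tilde\phi_F)-N\sigma_{\mathtt{a}}^L(\tilde\phi_G)\bigr|\leq \tfrac12 N^2\|\{F,G\}\|+D(F,G).
\]
Dividing by $N$ and choosing $N$ to minimize $\tfrac12 N\|\{F,G\}\|+D(F,G)/N$ gives the continuous optimum $N=\sqrt{2D(F,G)/\|\{F,G\}\|}$ with value $\sqrt{2D(F,G)\cdot\|\{F,G\}\|}$. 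Rounding $N$ to the nearest positive integer introduces only a constant factor, which one absorbs by refining the scaling (e.g.\ replacing $N\in\mathbb{Z}_{>0}$ with a continuous parameter via $H\mapsto tH$ and exploiting homogeneity of $\sigma_{\mathtt{a}}^L$).

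\medskip

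\textbf{Main obstacle.} The technical heart of the argument is the passage from the $c^L$-bound of Step 2 to the $\sigma$-bound of the same step: the $k$-fold iterated Hamiltonian $((NF)\natural(NG))^{\natural k}$ is \emph{not} $k$ times $(NF)\natural(NG)$, so the per-iterate Lipschitz estimate must be carefully propagated along the homogenization limit $\sigma(\tilde\phi)=\lim_k c^L(\mathtt{a},\tilde\phi^k)/k$ without loss of the factor $\|\{F,G\}\|$. This is exactly the place where, in the abstract Polterovich--Rosen setup, one would invoke the true Hofer-Lipschitz property of $\sigma$; in our Lagrangian setting, one must verify this directly from Proposition \ref{LZ}(1) together with the $\natural$-compatibility of the means.
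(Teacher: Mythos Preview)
Your approach is the one the paper has in mind: the paper omits the proof entirely, saying only that it follows Proposition~4.6.1 of \cite{PR} using Proposition~\ref{LZ} and Proposition~\ref{iroirodasu lemma}, and your outline is exactly this adaptation. Your identification of the ingredients (Lemma~\ref{sum composition deiff}, Proposition~\ref{iroirodasu lemma}, scaling optimization) and of the bound $q_{\mathtt{a},\tilde\phi_F^N}(\tilde\phi_G^N)\leq D(F,G)$ via the inclusion $N\mathbb{Z}\subset\mathbb{Z}$ is correct.

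Regarding your ``Main obstacle'': it is a genuine step, but it can be completed along the lines you indicate. The point is that $\sigma_{\mathtt{a}}^L$ inherits a one-sided Hofer-Lipschitz property from $c^L$. Writing $\tilde\eta=\tilde\phi\tilde\psi^{-1}$, one has $\tilde\phi^k\tilde\psi^{-k}=\prod_{j=0}^{k-1}\tilde\psi^{j}\tilde\eta\tilde\psi^{-j}$; each factor is generated by a Hamiltonian $K_t\circ(\psi_1^j)^{-1}$ (with $K$ a normalized generator of $\tilde\eta$), which has the same $\int_0^1\max(\cdot)\,dt$ as $K$ since composition with a symplectomorphism preserves the maximum. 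Concatenating, $\tilde\phi^k\tilde\psi^{-k}$ is generated by a normalized Hamiltonian with $\int\max = k\int\max K_t\,dt$, so by Proposition~\ref{LZ}(1) and the triangle inequality, $c^L(\mathtt{a},\tilde\phi^k)-c^L(\mathtt{a},\tilde\psi^k)\leq c^L(\mathtt{a},0)+k\int\max K_t\,dt$. Dividing by $k$ and letting $k\to\infty$ gives $\sigma_{\mathtt{a}}^L(\tilde\phi)-\sigma_{\mathtt{a}}^L(\tilde\psi)\leq\int\max K_t\,dt$; the constant $c^L(\mathtt{a},0)$ disappears in the limit. Applying this with $\tilde\phi=\tilde\phi_{N(F+G)}$, $\tilde\psi=\tilde\phi_{NF}\tilde\phi_{NG}$ and using the computation in Lemma~\ref{sum composition deiff} yields your first estimate with the correct constant $\tfrac12 N^2\|\{F,G\}\|$. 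Thus your outline goes through.
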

In the present paper, we give good upper bounds of $D(F,G)$ under some situations.

By Proposition \ref{LZ} and Proposition \ref{iroirodasu lemma},
we can prove Proposition \ref{pb ineq} quite similarly to Proposition 4.6.1 of \cite{PR}.
Therefore we omit the proof of Proposition \ref{pb ineq}.

\begin{rem}\label{technical remark on pb}
Let $U$ be an open subset with the bounded spectrum condition.
If the equality ``$q_{\mathtt{a},\tilde{f}}(\tilde{g})=q_{\mathtt{a},\mathrm{id}}(\tilde{g})$'' holds for any $\tilde{f},\tilde{g}\in\widetilde{\mathrm{Ham}}(M,\omega)$,
then we have $D(F,G)<K$ for any Hamiltonian function  $F\colon M\to\mathbb{R}$ with $\mathrm{Supp}(F)\subset U$.
 
 In the case of Oh-Schwarz spectral invariants, a similar equality actually holds and hence we obtain an upper bound of $D(F,G)$.
 However, in our case, the above equality does not hold and thus it is more difficult to give an upper bound of $D(F,G)$.
 
\end{rem}

\section{Proof of partial quasi-additivities and Theorem \ref{stem shv}}\label{stem section}

\begin{prop}\label{vanishing property}
Let $G\colon M\to\mathbb{R}$ be a Hamiltonian function.
Assume that $\mathrm{Supp}(G)$ satisfies the bounded spectrum condition with respect to some idempotent $\mathtt{a}$ of $QH_\ast(L)$,
$\zeta_\mathtt{a}^L(G)=0$.
\end{prop}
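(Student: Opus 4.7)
The plan is to leverage the bounded spectrum condition to control $c^L(\mathtt{a}, kG)$ uniformly in $k$ from above, and then use the triangle inequality to pair this with a matching lower bound.

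First I would set up the data coming from the hypothesis. Let $U$ be an open neighborhood of $\mathrm{Supp}(G)$ provided by Definition \ref{bounded spectrum}, and $K>0$ the associated constant, so that $c^L(\mathtt{a},F)<K$ for every Hamiltonian $F$ with compact support in $S^1\times U$. Because $G$ is autonomous and compactly supported inside $U$, the same is true of $\pm kG$ for every positive integer $k$. I would record the elementary fact that autonomy gives $G^{\natural k}=kG$ and $(-G)^{\natural k}=-kG$: indeed, $G$ is constant along its own flow, so $(G\natural G)(t,x)=G(x)+G((\phi_G^t)^{-1}(x))=2G(x)$, and this iterates.

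With this in place, the upper bound is immediate: applying the bounded spectrum condition to $F=kG$ gives $c^L(\mathtt{a},kG)<K$, whence
\[\zeta_\mathtt{a}^L(G)=\lim_{k\to\infty}\frac{c^L(\mathtt{a},kG)}{k}\leq\lim_{k\to\infty}\frac{K}{k}=0.\]

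For the matching lower bound, I would use that $\mathtt{a}$ is idempotent, i.e.\ $\mathtt{a}\ast\mathtt{a}=\mathtt{a}$, together with the identity $(kG)\natural(-kG)=0$ (again from autonomy). The triangle inequality of Proposition \ref{LZ}(3) then yields
\[c^L(\mathtt{a},0)=c^L(\mathtt{a}\ast\mathtt{a},(kG)\natural(-kG))\leq c^L(\mathtt{a},kG)+c^L(\mathtt{a},-kG).\]
Applying the bounded spectrum condition to $F=-kG$ gives $c^L(\mathtt{a},-kG)<K$, so $c^L(\mathtt{a},kG)>c^L(\mathtt{a},0)-K$. Dividing by $k$ and passing to the limit produces $\zeta_\mathtt{a}^L(G)\geq 0$, which combined with the upper bound closes the argument.

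There is no real obstacle here; the only care-requiring point is verifying $G^{\natural k}=kG$ and $(kG)\natural(-kG)=0$ for autonomous $G$, but both follow directly from the fact that $G$ is a first integral of $\phi_G^t$. The idempotency hypothesis is used in exactly one place, namely to keep the same class $\mathtt{a}$ on the left of the triangle inequality.
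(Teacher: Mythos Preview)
Your proof is correct and follows essentially the same route as the paper's: both use the bounded spectrum condition to bound $c^L(\mathtt{a},\pm kG)$ above by $K$, then the triangle inequality $c^L(\mathtt{a},0)\leq c^L(\mathtt{a},kG)+c^L(\mathtt{a},-kG)$ to get the matching lower bound, and finally divide by $k$ and pass to the limit. You make the autonomy identities $G^{\natural k}=kG$ and $(kG)\natural(-kG)=0$ explicit, which the paper leaves implicit, but otherwise the arguments coincide.
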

\begin{proof}
By the definition of the bounded spetrum condition,  there is a positive number $K$ such that
\[c^L(\mathtt{a},kG)<K,\]
for any $k\in\mathbb{Z}$.
Then, by the triangle inequality and the homotopy invariance ((2) and (3) of Proposition \ref{LZ}),
\[c^L(\mathtt{a},kG)\geq -c^L(\mathtt{a},-kG)+c^L(\mathtt{a},0)>-K+c^L(\mathtt{a},0),\]
and hence 
\[-K+c^L(\mathtt{a},0)<c^L(\mathtt{a},kG)<K,\]
for any $k\in\mathbb{Z}$.
Thus
\[\zeta_\mathtt{a}^L(G)=\lim_{k\to+\infty}\frac{c^L(\mathtt{a},kG)}{k}=0.\]
\end{proof}

To prove (1) and ($1^\prime$) of Proposition \ref{quasi-state basic}, we give a more general proposition.
\begin{prop}\label{general P qa}
For Hamiltonian functions $F,G\colon M\to\mathbb{R}$ with $\{F,G\}=0$ and $\mathrm{Supp}(G)$ satisfies the bounded spectrum condition with respect to $\mathtt{a}$,
$\zeta_\mathtt{a}^L(F+G)=\zeta_\mathtt{a}^L(F)$.
\end{prop}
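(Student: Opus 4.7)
The plan is to use the hypothesis $\{F,G\}=0$ to turn the sum $F+G$ into a composition $F\natural G$, then apply the triangle inequality for Lagrangian spectral invariants, and finally kill the $G$-contribution using the bounded spectrum condition via Proposition \ref{vanishing property}.

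First, I would observe that since $\{F,G\}=0$ and both functions are autonomous, $G$ is invariant under $\phi_F^t$, so $G(t,(\phi_F^t)^{-1}(x))=G(x)$ and hence
\[
F\natural G \;=\; F+G.
\]
More generally, for any positive integer $k$, one has $k(F+G) = (kF)\natural(kG)$ (again using $\{kF,kG\}=0$).

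Next, since $\mathtt{a}$ is an idempotent, $\mathtt{a}\ast\mathtt{a}=\mathtt{a}$, and the triangle inequality (Proposition \ref{LZ}(3)) gives
\[
c^L(\mathtt{a},k(F+G)) \;=\; c^L(\mathtt{a},(kF)\natural(kG)) \;\leq\; c^L(\mathtt{a},kF)+c^L(\mathtt{a},kG).
\]
Dividing by $k$, letting $k\to\infty$, and invoking Proposition \ref{vanishing property} (which applies because $\mathrm{Supp}(G)$ satisfies the bounded spectrum condition with respect to $\mathtt{a}$) yields
\[
\zeta_{\mathtt{a}}^L(F+G) \;\leq\; \zeta_{\mathtt{a}}^L(F) + \zeta_{\mathtt{a}}^L(G) \;=\; \zeta_{\mathtt{a}}^L(F).
\]

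For the reverse inequality, I would apply the same reasoning to the decomposition $F = (F+G) + (-G)$. Note that $\{F+G,-G\} = -\{F,G\}-\{G,G\} = 0$, and $\mathrm{Supp}(-G)=\mathrm{Supp}(G)$ still satisfies the bounded spectrum condition with respect to $\mathtt{a}$, so Proposition \ref{vanishing property} gives $\zeta_{\mathtt{a}}^L(-G)=0$. The identical argument then produces
\[
\zeta_{\mathtt{a}}^L(F) \;=\; \zeta_{\mathtt{a}}^L((F+G)+(-G)) \;\leq\; \zeta_{\mathtt{a}}^L(F+G) + \zeta_{\mathtt{a}}^L(-G) \;=\; \zeta_{\mathtt{a}}^L(F+G),
\]
which combined with the previous inequality yields the desired equality. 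The only point that requires mild care is the identity $F\natural G = F+G$ under commutativity of $F$ and $G$; everything else is a direct combination of the triangle inequality and Proposition \ref{vanishing property}, so I do not anticipate a serious obstacle.
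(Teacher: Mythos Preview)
Your argument is correct and is more elementary than the paper's. The key observation that $\{F,G\}=0$ forces $F\natural G=F+G$ (and hence $k(F+G)=(kF)\natural(kG)$ for autonomous $F,G$) is valid, and the two-sided application of the triangle inequality together with Proposition~\ref{vanishing property} cleanly yields both inequalities.

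The paper proceeds differently: it invokes the Poisson bracket inequality (Proposition~\ref{pb ineq}) after first checking that the commutativity $\{F,G\}=0$ makes the conjugation in $q_{\mathtt{a},\tilde\phi_G}(\tilde\phi_{tF})$ trivial, so that $D(F,G)<+\infty$; the right-hand side $(2D(F,G)\cdot\|\{F,G\}\|)^{1/2}$ then vanishes and one gets $\zeta_{\mathtt{a}}^L(F+G)=\zeta_{\mathtt{a}}^L(F)+\zeta_{\mathtt{a}}^L(G)$ in one stroke, after which Proposition~\ref{vanishing property} finishes as in your proof. Your route avoids the machinery of Propositions~\ref{iroirodasu lemma} and~\ref{pb ineq} entirely, which is a genuine simplification for this particular statement. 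The paper's route, on the other hand, is an instance of a more general inequality that is indispensable later (e.g.\ in Proposition~\ref{nonzero spec 2}), where the functions involved do \emph{not} Poisson-commute and one cannot reduce $F\natural G$ to $F+G$; so the paper is essentially exhibiting Proposition~\ref{general P qa} as a special case of that broader framework rather than optimizing the proof of this proposition in isolation.
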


\begin{proof}
Since $\{F,G\}=0$, $(\tilde{\phi}_G)^{-k}\tilde{\phi}_F^t(\tilde{\phi}_G)^k=\tilde{\phi}_F^t$ for any $t\in\mathbb{R}$ and any $k\in\mathbb{Z}$.
Hence $q_{\mathtt{a},\tilde{\phi}_G}(\tilde{\phi}_F)=q_{\mathtt{a}}(\tilde{\phi}_F)$ and $D(F,G)<+\infty$.
Thus, by Proposition \ref{pb ineq},
\[||\zeta_{\mathtt{a}}^L(F+G)-\zeta_{\mathtt{a}}^L(F)-\zeta_{\mathtt{a}}^L(G)||\leq (2D(F,G)\cdot||\{F,G\}||)^{1/2}=0.\]
Hence, by Proposition \ref{vanishing property},
\[\zeta_{\mathtt{a}}^L(F+G)=\zeta_{\mathtt{a}}^L(F)+\zeta_{\mathtt{a}}^L(G)=\zeta_{\mathtt{a}}^L(F).\]
\end{proof}

\begin{proof}[Proof of ($1^\prime$) of Proposition \ref{quasi-state basic}]
Since $\mathrm{Supp}(G)$ is displaceable from $L$, there is an open neighborhood $U$ of $\mathrm{Supp}(G)$ which is displaceable from $L$.
Thus, by Proposition \ref{Ostrover}, $\mathrm{Supp}(G)$ satisfies the bounded spectral condition with respect to any idempotent $\mathtt{a}$ and thus, by Proposition \ref{general P qa},
\[\zeta_{\mathtt{a}}^L(F+G)=\zeta_{\mathtt{a}}^L(F).\]
\end{proof}
We can prove (1) of Proposition \ref{quasi-state basic} by Proposition \ref{abstract disp bounded spec ham 1} similarly to ($1^\prime$) of Proposition \ref{quasi-state basic}.

The proof of Theorem  \ref{stem shv} is quite similar to the one of the original stem case if we know (1) and ($1^\prime$) of Proposition \ref{quasi-state basic} and we omit the proof.

\section{Non-degeneracy of spectral norms}\label{non-deg section}
We prove the following lemmas for a non-trivial idempotent $\mathtt{a}$ of $QH_\ast(L)$ with $c^L(\mathtt{a},0)\leq0$.

\begin{lem}\label{disjoint muryoku 1}
Assume that an idempotent $\mathtt{a}$ of $QH_\ast(L)$ satisfies $c^L(\mathtt{a},0)\leq 0$.
For any Hamiltonian function $F,H\colon S^1\times M \to\mathbb{R}$ with $\mathrm{Supp}(F_t)\cap L=\emptyset$ for any $t$,
\[
c^L(\mathtt{a},F\natural H)
=c^L(\mathtt{a},H\natural F)
=c^L(\mathtt{a},H).
\]
\end{lem}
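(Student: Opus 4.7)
The plan is to exploit the hypothesis $\mathrm{Supp}(F_t)\cap L=\emptyset$, which means $X_F^t$ vanishes identically on $L$, so $\phi_F^t$ fixes $L$ pointwise. First I would establish the key intermediate claim that $c^L(\mathtt{a},F)=c^L(\mathtt{a},0)$ (and symmetrically $c^L(\mathtt{a},\bar F)=c^L(\mathtt{a},0)$). For every $s\in[0,1]$, any Hamiltonian chord of $sF$ from $L$ to $L$ must start at some $x\in L$, and by uniqueness of ODE solutions it is just the constant chord at $x$; its action with capping $\hat z$ is $\int_0^1 sF(t,x)dt-\int\hat z^{\ast}\omega=-\int\hat z^{\ast}\omega$. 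Hence $\mathrm{Spec}(sF;L)\subset\omega(\pi_2(M,L))=\lambda N_L\mathbb{Z}$, which is discrete because $N_L\geq 2$. Since $s\mapsto c^L(\mathtt{a},sF)$ is continuous by Lipschitz continuity and lands in this discrete set by spectrality (Proposition \ref{LZ}(6)), it is constant in $s$, yielding the claim. The same argument applies to $\bar F$ because $\bar F$ also vanishes on $L$.

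Next I would show the identity $\bar F\natural F\natural H=H$ (and symmetrically $H\natural F\natural\bar F=H$) as functions on $S^1\times M$, not merely up to a time-dependent additive constant. Both sides generate the Hamiltonian isotopy $\{\phi_H^t\}$, so on a connected symplectic manifold their difference is a function $\rho\colon S^1\to\mathbb{R}$ depending only on time; evaluating at any point of $L$, where $F$ and $\bar F$ vanish identically, forces $\rho\equiv 0$.

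With these inputs the lemma follows from two applications of the triangle inequality (Proposition \ref{LZ}(3)), using $\mathtt{a}\ast\mathtt{a}=\mathtt{a}$ and the hypothesis $c^L(\mathtt{a},0)\leq 0$:
\[c^L(\mathtt{a},F\natural H)\leq c^L(\mathtt{a},F)+c^L(\mathtt{a},H)=c^L(\mathtt{a},0)+c^L(\mathtt{a},H)\leq c^L(\mathtt{a},H),\]
and, conversely,
\[c^L(\mathtt{a},H)=c^L(\mathtt{a},\bar F\natural F\natural H)\leq c^L(\mathtt{a},\bar F)+c^L(\mathtt{a},F\natural H)\leq c^L(\mathtt{a},F\natural H).\]
An identical argument starting from $H\natural F\natural\bar F=H$ settles $c^L(\mathtt{a},H\natural F)=c^L(\mathtt{a},H)$. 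The main obstacle I anticipate is the first step: identifying $\mathrm{Spec}(sF;L)$ and arguing for constancy of $c^L(\mathtt{a},sF)$ crucially exploits both $N_L\geq 2$ (so that $\omega(\pi_2(M,L))$ is discrete) and the spectrality property for smooth Hamiltonians. A secondary subtlety is that the identity $\bar F\natural F\natural H=H$ has to be checked on the nose rather than deduced from homotopy invariance (Proposition \ref{LZ}(2)), since the latter requires normalization hypotheses that are not available here.
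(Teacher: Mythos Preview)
Your argument is correct. The triangle-inequality sandwich you use is essentially the same as the paper's, but you obtain the key input $c^L(\mathtt{a},F)\leq 0$ (and likewise for $\bar F$) by a genuinely different mechanism. The paper simply invokes Proposition~\ref{Ostrover} with the zero Hamiltonian: since $\mathrm{Supp}(F)$ is already disjoint from $L$, the identity ``displaces'' it, yielding $c^L(\mathtt{a},F)\leq 2c^L(\mathtt{a},0)\leq 0$, and then the chain
\[
c^L(\mathtt{a},H)-c^L(\mathtt{a},\bar F)\ \leq\ c^L(\mathtt{a},H\natural F)\ \leq\ c^L(\mathtt{a},H)+c^L(\mathtt{a},F)
\]
forces all three quantities to coincide. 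Your route instead proves the sharper identity $c^L(\mathtt{a},sF)=c^L(\mathtt{a},0)$ for all $s$ by computing $\mathrm{Spec}(sF;L)\subset\omega(\pi_2(M,L))$ and using spectrality plus continuity; this is more self-contained (no appeal to the Ostrover trick) but leans on the monotonicity hypothesis to get discreteness of the spectrum. Your explicit verification that $\bar F\natural F\natural H=H$ on the nose, by evaluating on $L$, is also a nice touch that sidesteps any normalization issues with homotopy invariance, whereas the paper handles the corresponding step by citing Proposition~\ref{LZ}(2).
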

\begin{proof}
Since $\mathrm{Supp}(F)$ is disjoint from $L$, by Proposition \ref{Ostrover}, $c^L(\mathtt{a},F)\leq 2c^L(\mathtt{a},0)\leq0$ and $c^L(\mathtt{a},\bar{F})\leq 2c^L(\mathtt{a},0)\leq0$.
By the triangle inequality and the homotopy invariance ((3), (2) of Proposition \ref{LZ}),
\begin{align*}
&c^L(\mathtt{a},H\natural F)\leq c^L(\mathtt{a},H)+c^L(\mathtt{a},F),\\
 &     c^L(\mathtt{a},H\natural F) +c^L(\mathtt{a},\bar{F})     \geq c^L(\mathtt{a},H).
\end{align*}
Thus
\[c^L(\mathtt{a},H)-c^L(\mathtt{a},\bar{F}) \leq c^L(\mathtt{a},H\natural F)\leq c^L(\mathtt{a},H)+c^L(\mathtt{a},F)\]
Since  $c^L(\mathtt{a},F)\leq 0$ and $c^L(\mathtt{a},\bar{F})\leq 0$, $c^L(\mathtt{a},F)=c^L(\mathtt{a},\bar{F})=0$ and $c^L(\mathtt{a},H\natural F)=c^L(\mathtt{a},H)$.
We can prove $c^L(\mathtt{a},F\natural H)=c^L(\mathtt{a},H)$ similarly.
\end{proof}

\begin{lem}\label{disjoint muryoku 2}
Assume that an idempotent $\mathtt{a}$ of $QH_\ast(L)$ satisfies $c^L(\mathtt{a},0)\leq 0$.
For any $\psi\in\widetilde{\mathrm{Ham}}(M,\omega)$ and any Hamiltonian function $F\colon S^1\times M \to\mathbb{R}$ with $\mathrm{Supp}(F_t)\cap L=\emptyset$ for any $t$,
\[
q_{\mathtt{a},\tilde{\phi}_F}(\tilde{\psi})
=q_{\mathtt{a}}(\tilde{\psi}).
\]
\end{lem}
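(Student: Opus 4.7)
The plan is to reduce everything to Lemma \ref{disjoint muryoku 1}. The key claim will be that for every integer $k$,
\[c^L(\mathtt{a},\tilde{\phi}_F^{-k}\tilde{\psi}\tilde{\phi}_F^k)=c^L(\mathtt{a},\tilde{\psi}),\]
and symmetrically with $\tilde{\psi}$ replaced by $\tilde{\psi}^{-1}$. Once this is established, each of the two suprema in the definition of $q_{\mathtt{a},\tilde{\phi}_F}(\tilde{\psi})$ becomes independent of $k$, so
\[q_{\mathtt{a},\tilde{\phi}_F}(\tilde{\psi})=c^L(\mathtt{a},\tilde{\psi})+c^L(\mathtt{a},\tilde{\psi}^{-1})=q_{\mathtt{a}}(\tilde{\psi}).\]

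To prove the key claim, pick any Hamiltonian $H$ generating $\tilde{\psi}$; then $\bar{F}^{\natural k}\natural H\natural F^{\natural k}$ generates $\tilde{\phi}_F^{-k}\tilde{\psi}\tilde{\phi}_F^k$. First I will verify that both $F^{\natural k}$ and $\bar{F}^{\natural k}$ have time-$t$ support disjoint from $L$. Since $F_t$ vanishes on an open neighborhood $V$ of $L$, the vector field $X_F^t$ is zero on $V$, so $\phi_F^t$ fixes $V$ pointwise for every $t$; iterating the defining formulas of $\natural$ and of $\bar{(\cdot)}$ then shows that every time slice of $F^{\natural k}$ and $\bar{F}^{\natural k}$ is zero on $V$. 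Two applications of Lemma \ref{disjoint muryoku 1}, peeling off $F^{\natural k}$ on the right and $\bar{F}^{\natural k}$ on the left, yield
\[c^L(\mathtt{a},\bar{F}^{\natural k}\natural H\natural F^{\natural k})=c^L(\mathtt{a},H).\]

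The remaining technicality is that the spectral invariants on $\widetilde{\mathrm{Ham}}(M,\omega)$ are defined via normalized Hamiltonians, while $F$ need not be normalized. I handle this by a direct mean computation: writing $c_K(t)=\frac{1}{\mathrm{vol}(M)}\int_MK_t\,\omega^m$, one has $c_{K_1\natural K_2}(t)=c_{K_1}(t)+c_{K_2}(t)$ since $\phi_{K_1}^t$ preserves $\omega^m$, and the same change of variables gives $c_{\bar{F}}(t)=-c_F(t)$. Consequently $c_{\bar{F}^{\natural k}}(t)=-k\,c_F(t)=-c_{F^{\natural k}}(t)$, so $\bar{F}^{\natural k}\natural H\natural F^{\natural k}$ has the same time-dependent mean as $H$. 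Hence normalizing both Hamiltonians requires identical shifts, and the Hamiltonian shift property in Proposition \ref{LZ} upgrades the previous equation to $c^L(\mathtt{a},\tilde{\phi}_F^{-k}\tilde{\psi}\tilde{\phi}_F^k)=c^L(\mathtt{a},\tilde{\psi})$. The argument for $\tilde{\psi}^{-1}$ is identical after replacing $H$ by $\bar{H}$.

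The main conceptual point is recognizing that Lemma \ref{disjoint muryoku 1} is not merely an inequality but an exact equality for concatenation with a support-avoiding $F$, which lets us absorb both copies of $\tilde{\phi}_F^{\pm k}$ simultaneously. The principal obstacle I anticipate is the normalization bookkeeping described above, but it dissolves cleanly thanks to the antisymmetry $c_{\bar{F}}=-c_F$ and the additivity of the mean under $\natural$.
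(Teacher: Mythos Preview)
Your argument is correct and follows essentially the same route as the paper: reduce to Lemma~\ref{disjoint muryoku 1} by writing $\tilde{\phi}_F^{-k}\tilde{\psi}\tilde{\phi}_F^k$ via $\bar{F}^{\natural k}\natural H\natural F^{\natural k}$ and peel off the outer factors. The paper simply picks $H$ normalized from the start and silently uses that $\bar{F}^{\natural k}\natural H\natural F^{\natural k}$ is then automatically normalized (exactly your mean computation), so your explicit normalization bookkeeping and your verification that $F^{\natural k},\bar{F}^{\natural k}$ still have time-slices supported off $L$ just make explicit what the paper leaves implicit.
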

\begin{proof}

Take a normalized Hamiltonian $H$ generating $\tilde{\psi}$. 
By (2) of Proposition \ref{LZ},
\begin{align*}
&c^L(\mathtt{a},\tilde{\phi}_F^{-k}\tilde{\psi}\tilde{\phi}_F^k)= c^L(\mathtt{a},\bar{F}^{\natural k}\natural H\natural F^{\natural k}),\\
 &     c^L(\mathtt{a},\tilde{\phi}_F^{-k}\tilde{\psi}^{-1}\tilde{\phi}_F^k)= c^L(\mathtt{a},\bar{F}^{\natural k}\natural\bar{H}\natural F^{\natural k}).
\end{align*}
By Lemma \ref{disjoint muryoku 1},
$c^L(\mathtt{a},\bar{F}^{\natural k}\natural H\natural F^{\natural k})=c^L(\mathtt{a},H)=c^L(\mathtt{a},\tilde{\psi})$ and $c^L(\mathtt{a},\bar{F}^{\natural k}\natural \bar{H}\natural F^{\natural k})=c^L(\mathtt{a},\bar{H})=c^L(\mathtt{a},\tilde{\psi}^{-1})$.
Thus, by the definition of $q_{\mathtt{a},\tilde{\phi}_F}(\tilde{\phi})$, we complete the proof.
\end{proof}


\begin{prop}\label{nonzero spec 2}
Assume that an idempotent $\mathtt{a}$ of $QH_\ast(L)$ satisfies $c^L(\mathtt{a},0)\leq 0$.
Let $\hat{U}$ be an open subset of $M$ with $\hat{U}\cap L\neq\emptyset$.
Then there exists an autonomous Hamiltonian function $F\colon M\to\mathbb{R}$ such that $\mathrm{Supp}(F)\subset \hat{U}$ and $c^L(\mathtt{a},F)>0$.
\end{prop}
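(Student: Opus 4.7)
The plan is to argue by contradiction. Suppose that $c^L(\mathtt{a}, F) \leq 0$ for every autonomous Hamiltonian $F$ with $\mathrm{Supp}(F) \subset \hat{U}$. Since $kF$ is also autonomous and supported in $\hat{U}$ for each $k > 0$, dividing $c^L(\mathtt{a}, kF) \leq 0$ by $k$ and passing to the limit gives
\[
\zeta_\mathtt{a}^L(F) \leq 0 \quad \text{for every autonomous } F \text{ with } \mathrm{Supp}(F) \subset \hat{U}.
\]
The goal is then to exhibit an autonomous $F$ supported in $\hat{U}$ with $\zeta_\mathtt{a}^L(F) > 0$, producing the desired contradiction.

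The source of positivity is Theorem \ref{stem shv} applied to $L$ itself (noted as an $L$-stem just after Definition \ref{L stem}): $L$ is $(L,\mathtt{a})$-superheavy, hence $(L,\mathtt{a})$-heavy. Consequently $\zeta_\mathtt{a}^L(H) \geq \inf_L H$ for every continuous $H$, and in particular $\zeta_\mathtt{a}^L(H) \geq 1$ whenever $H\colon M \to [0,1]$ is smooth, compactly supported in a small tubular neighborhood of $L$, and identically $1$ on a neighborhood of $L$. Such an $H$ exists since $L$ is closed.

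To transport this lower bound onto a function supported in $\hat{U}$, I would cover the compact set $L \setminus \hat{U}$ by finitely many open sets $V_1, \ldots, V_k$ each displaceable from $L$. This is possible because each point of $L$ admits arbitrarily small neighborhoods that can be pushed off $L$ by a Hamiltonian isotopy. Take a partition of unity $\{\rho_0, \rho_1, \ldots, \rho_k\}$ subordinate to the cover $\{\hat{U}, V_1, \ldots, V_k\}$ of a neighborhood of $L$, and set $F = H\rho_0$ and $G_i = H\rho_i$. Iterating Lagrangian partial quasi-additivity ((1') of Proposition \ref{quasi-state basic}) to peel off the $G_i$'s one by one would then give
\[
\zeta_\mathtt{a}^L(H) = \zeta_\mathtt{a}^L(F + G_1 + \cdots + G_k) = \zeta_\mathtt{a}^L(F),
\]
so $\zeta_\mathtt{a}^L(F) \geq 1 > 0$, contradicting the standing hypothesis.

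The main obstacle is that this iterated application of (1') requires the Poisson commutativity $\{F + G_1 + \cdots + G_{i-1}, G_i\} = 0$ at each step, which does not hold for a generic partition of unity. I would resolve this either by choosing the partition-of-unity functions $\rho_0, \rho_1, \ldots, \rho_k$ to all be pullbacks of functions on $\mathbb{R}$ under a single auxiliary smooth function (so that their pairwise Poisson brackets vanish identically, and hence also the brackets between the corresponding $F$ and $G_i$'s), or by replacing (1') with the quantitative Poisson bracket inequality of Proposition \ref{pb ineq} and rescaling $H \mapsto tH$ so that the accumulated error terms can be made strictly smaller than the lower bound on $\zeta_\mathtt{a}^L(tH)$ provided by heaviness.
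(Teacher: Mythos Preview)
Your overall architecture---contradiction, a positivity source for $\zeta_\mathtt{a}^L$, then localization into $\hat{U}$ by peeling off pieces supported in sets displaceable from $L$---is the right shape, and matches the paper's strategy. The gap is in the peeling step: neither of your two proposed fixes for the Poisson-commutativity problem goes through.

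For the first fix, the parenthetical claim is false: even if $\{\rho_i,\rho_j\}=0$ for all $i,j$, one computes $\{H\rho_i,H\rho_j\}=H\rho_j\{\rho_i,H\}-H\rho_i\{\rho_j,H\}$, which has no reason to vanish unless $H$ is itself a function of the same auxiliary $g$. But if $H$ and all $\rho_i$ are pullbacks of a single $g\colon M\to\mathbb{R}$, then each $\rho_i$ is constant on every level set of $g$; since $\mathrm{Supp}(\rho_i)\subset V_i$ for $i\geq 1$ and the $V_i$ are small balls, each level set of $g$ meeting $\mathrm{Supp}(\rho_i)$ would have to lie entirely inside $V_i$, which is impossible for a generic codimension-one hypersurface. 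So one cannot arrange the required commutativity.

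For the second fix, the obstacle is not the size of the error term but the hypothesis $D(F,G)<+\infty$ in Proposition~\ref{pb ineq}. Recall $D(F,G)$ involves $q_{\mathtt{a},\tilde{\phi}_F}(\tilde{\phi}_{tG})=\sup_k c^L(\mathtt{a},\tilde{\phi}_F^{-k}\tilde{\phi}_{tG}\tilde{\phi}_F^{k})+\sup_k c^L(\mathtt{a},\tilde{\phi}_F^{-k}\tilde{\phi}_{tG}^{-1}\tilde{\phi}_F^{k})$. In the Lagrangian setting, conjugation by $\tilde{\phi}_F^{k}$ does \emph{not} preserve $c^L$, so the bounded spectrum condition on $\mathrm{Supp}(G_i)$ alone gives no $k$-uniform bound; this is exactly the difficulty flagged in Remark~\ref{technical remark on pb}. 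Rescaling $H\mapsto tH$ does not help: if $D=+\infty$ the inequality is vacuous at every scale.

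The paper's proof sidesteps this by a three-piece (not $k$-piece) cover $\{U_P,U_S,U_K\}$ with two special features. First, $U_K$ is chosen \emph{disjoint} from $L$ (not merely displaceable), so Lemma~\ref{disjoint muryoku 2} yields $q_{\mathtt{a},\tilde{\phi}_K}(\tilde{\phi}_{tP})=q_{\mathtt{a}}(\tilde{\phi}_{tP})$; combined with the contradiction hypothesis $c^L(\mathtt{a},tP)\leq 0$ for all $t$ (which is available since $U_P\subset\hat{U}$), this forces $q_{\mathtt{a}}(\tilde{\phi}_{tP})=0$ and hence $D(P,K)=0$, so Proposition~\ref{pb ineq} gives exact additivity $\zeta_\mathtt{a}^L(P+K)=\zeta_\mathtt{a}^L(P)+\zeta_\mathtt{a}^L(K)$. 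Second, the single ``displaceable from $L$'' piece $S$ is peeled off from the constant $1$ via $(1')$, where $\{S,1-S\}=0$ holds automatically. The crucial point you are missing is the role of a piece whose support is \emph{disjoint} from $L$: this, via Lemma~\ref{disjoint muryoku 2}, is what makes $D$ controllable.
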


\begin{proof}
Fix a Riemannian metric on $M$.
For a subset $X$ and a positive number $r$, let $N_r(X)$, $\bar{N}_r(X)$ denote the $r$-neighborhood of $X$, its topological closure, respectively.

Since $\hat{U}\cap L\neq\emptyset$, we can take a point $x$ in $\hat{U}\cap L$.
Then, there exists a positive number $\epsilon$ such that $\bar{N}_{4\epsilon}(\{x\})$ is displaceable from $L$
and $\bar{N}_{4\epsilon}(\{x\})\subset\hat{U}$.

Set $L^\prime=L\setminus\bar{N}_{3\epsilon}(\{x\})$.
Since $\mathrm{dim}L^\prime=\mathrm{dim}L=\frac{1}{2}\mathrm{dim}M$ and $L^\prime$ is not compact, $L^\prime$ is displaceable from $L$.
Thus there is a positive number $\epsilon^\prime$ such that 
$\bar{N}_{2\epsilon^\prime}(L^\prime)$ is also displaceable from $L$.

Set $\epsilon^{\prime\prime}=\min\{\epsilon,\epsilon^\prime\}$.
We define three subsets $U_P,U_S,U_K$ of $M$ by
$U_P=N_{4\epsilon}(\{x\})$,
$U_S=N_{2\epsilon^{\prime\prime}}(L^{\prime})$,
$U_K=M\setminus N_{\epsilon^{\prime\prime}}(L)$.
Then,
\begin{itemize}
\item $U_S$ is displaceable from $L$,
\item  $U_P\cup U_S\cup U_K=M$,
\item $U_K\cap L=\emptyset$.
\end{itemize}

Take a partitions $\{P,S,K\}$ of unity  subordinated to the open cover
$\{U_P,U_S,U_K\}$ \textit{i.e.}
$P,S,K\colon M\to[0,1]$ are smooth functions,
$\mathrm{Supp}(P)\subset U_P$,
$\mathrm{Supp}(S)\subset U_S$,
$\mathrm{Supp}(K)\subset U_K$,
and $P+S+K\equiv1$.

Since $U_S$ is displaceable from $L$ and $\{S,1-S\}=0$, by ($1^\prime$), (2)  of Proposition \ref{quasi-state basic}, 
\[1=\zeta_{\mathtt{a}}^L(1)=\zeta_{\mathtt{a}}^L(1-S).\]

Since $U_K\cap L=\emptyset$, by Lemma \ref{disjoint muryoku 2},
\[
q_{\mathtt{a},\tilde{\phi}_{K}}(\tilde{\phi}_{P})
=q_{\mathtt{a}}(\tilde{\phi}_{P}).
\]
To prove by contradiction, we assume $c^L(\mathtt{a},tP)\leq0$ for any real number $t$.
Since $P$ is a time-independent Hamiltonian function, $\overline{(tP)}=-tP$ for any $t$ and hence $(\tilde{\phi}_{tP})^{-1}=\tilde{\phi}_{-tP}$.
Thus, for any $t$,
\begin{align*}
q_{\mathtt{a}}(\tilde{\phi}_{tP})&=c^L(\mathtt{a},\tilde{\phi}_{tP})+c^L(\mathtt{a},(\tilde{\phi}_{tP})^{-1})\\
     &=c^L(\mathtt{a},\tilde{\phi}_{tP})+c^L(\mathtt{a},\tilde{\phi}_{-tP})\\\
      &=c^L(\mathtt{a},tP)+c^L(\mathtt{a},-tP)\leq0.
\end{align*}

By Proposition \ref{iroirodasu lemma}, $q_{\mathtt{a}}(\tilde{\phi}_{tP})=0$ and therefore $q_{\mathtt{a},\tilde{\phi}_{K}}(\tilde{\phi}_{tP})=q_{\mathtt{a}}(\tilde{\phi}_{tP})=0$.
Hence, by Proposition \ref{pb ineq},
\[\zeta_{\mathtt{a}}^L(P+K)=\zeta_{\mathtt{a}}^L(P)+\zeta_{\mathtt{a}}^L(K).\]

Since $U_P$ and $U_K$ are displaceable from $L$, by Propositions \ref{Ostrover} and \ref{vanishing property},
\[\zeta_{\mathtt{a}}^L(P)=\zeta_{\mathtt{a}}^L(K)=0,\]
and hence $\zeta_{\mathtt{a}}^L(P+K)=0$.
Since $P+S+K\equiv1$, this contradict $1=\zeta_{\mathtt{a}}^L(1-S)$.
Hence we prove that 
$c^L(\mathtt{a},tP)>0$ for some $t$.
Since $\mathrm{Supp}(P)\subset U_P\subset \hat{U}$, we complete the proof.
\end{proof}

\begin{proof}[Proof of Theorem \ref{non-deg of spec norm}]
If $c^L(\mathtt{a},0)>0$, then, by (3) of Proposition \ref{LZ}, $c^L(\mathtt{a},H)+c^L(\mathtt{a},\bar{H})\geq c^L(\mathtt{a},0)> 0$.
Thus we may assume $c^L(\mathtt{a},0)\leq 0$.

Since $\tilde{\phi}_H(L)\neq L$, there is an open subset $U$ of $M$ such that $\tilde{\phi}_H(U)\cap L=\emptyset$ and $U\cap L\neq\emptyset$.
Then, by Lemma \ref{nonzero spec 2},  there exists a Hamiltonian function $F\colon S^1 \times U\to\mathbb{R}$ with compact support on $S^1\times U$ such that $c^L(\mathtt{a},F)>0$.
Thus, by Proposition \ref{Ostrover}, $c^L(\mathtt{a},H)+c^L(\mathtt{a},\bar{H})\geq c^L(\mathtt{a},F)> 0$.
\end{proof}

\section{The energy capacity inequality and heavy, superheavy subsets on  a torus}\label{Proof of the energy capacity inequality}



\begin{prop}\label{spectral detemined}
Let $L$ be a Lagrangian submanifold of a closed symplectic manifold $(M,\omega)$ with $\omega(\pi_2(M,L))=0$, $N_L\geq2$ and $QH_\ast(L)\neq0$
(Note that $\omega(\pi_2(M,L))=0$ implies monotonicity of $L$).
Assume that a Hamiltonian function $H\colon M\to\mathbb{R}$ with compact support is 
$L$-slow.
Then
\[c^L([L],H)=m(H).\]
\end{prop}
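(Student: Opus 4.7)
The plan is to first identify $\mathrm{Spec}(H;L)$ and then resolve the residual ambiguity by splitting into cases according to whether $L$ is invariant under $\phi_H$. Since $\omega(\pi_2(M,L))=0$, the capping term in the action drops out, giving $\mathcal{A}_H^L([z,\hat z])=\int_0^1 H(z(t))\,dt$. The $L$-slow hypothesis forces every length-one Hamiltonian chord from $L$ to $L$ to be constant, hence located at some $x\in L\cap\mathrm{Crit}(H)$; $L$-simplicity restricts $H|_{\mathrm{Crit}(H)}$ to $\{0,m(H)\}$. Combining, $\mathrm{Spec}(H;L)\subseteq\{0,m(H)\}$, so by spectrality (Proposition~\ref{LZ}(6)), $c^L([L],H)\in\{0,m(H)\}$.

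If $\phi_H(L)=L$, then for each $x\in L$ the orbit $t\mapsto\phi_H^t(x)$ is a length-one chord with both endpoints in $L$, hence constant by $L$-slow; so $X_H$ vanishes on $L$ and $L\subseteq\mathrm{Crit}(H)$. Connectedness of $L$ combined with $H|_L$ taking values only in $\{0,m(H)\}$ forces $H|_L$ to be constant, and the hypothesis $H|_W=m(H)$ with $W\cap L\neq\emptyset$ excludes the value $0$ when $m(H)\neq 0$. So $H|_L\equiv m(H)$, giving $\mathrm{Spec}(H;L)=\{m(H)\}$ and $c^L([L],H)=m(H)$. (The case $m(H)=0$ is trivial since then $H\equiv 0$.)

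If instead $\phi_H(L)\neq L$, I would invoke Theorem~\ref{non-deg of spec norm} to get $c^L([L],H)+c^L([L],\overline H)>0$. For autonomous $H$ the identity $\{H,H\}=0$ yields $\overline H(t,x)=-H(x)$, and $-H$ is itself $L$-slow and $L$-simple with $m(-H)=-m(H)$; repeating the spectrum analysis gives $c^L([L],\overline H)\in\{0,-m(H)\}$. In the regime $m(H)>0$ relevant to the energy-capacity application (where $C_{LR}=\sup m(H)$), among the four combinations in $\{0,m(H)\}\times\{0,-m(H)\}$ only $(m(H),0)$ yields a strictly positive sum, forcing $c^L([L],H)=m(H)$. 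The main obstacle is this sign-matching argument: Theorem~\ref{non-deg of spec norm} provides only a strict inequality, which is just enough to rule out the other three spectrum combinations, so the identification $\overline H=-H$ via autonomy and the spectrum analysis for $-H$ must each go through without slack.
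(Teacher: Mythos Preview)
Your argument is correct and follows the same approach as the paper: compute $\mathrm{Spec}(H;L)\subseteq\{0,m(H)\}$ from $\omega(\pi_2(M,L))=0$ and $L$-slowness, then combine spectrality with Theorem~\ref{non-deg of spec norm} to force $c^L([L],H)=m(H)$. You are in fact more careful than the paper on two points it glosses over: the paper simply asserts $\phi_H(L)\neq L$ (which can fail, e.g.\ when $H$ is constant equal to $m(H)$ on a tubular neighborhood of $L$), whereas you handle that case directly; and you make explicit the sign condition $m(H)>0$ needed for the final elimination, which the paper uses tacitly.
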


\begin{proof}
Since $\omega(\pi_2(M,L))=0$ and $H$ is 
$L$-slow, $\mathrm{Spec}(H;L)=\{m(H),0\}$, $\mathrm{Spec}(\bar{H};L)=\{-m(H),0\}$ and $\phi_H(L)\neq L$.
By Theorem \ref{non-deg of spec norm} and $\phi_H(L)\neq L$, $c^L([L],H)+c^L([L],\bar{H})>0$.
Thus, by the spectrality ((6) of Proposition \ref{LZ}), $c^L([L],H)=m(H)$ and $c^L([L],\bar{H})=0$.
\end{proof}

\begin{proof}[Proof of Theorem \ref{energy capacity ineq}]
Let $F$ be an $L\cap U$-simple and $L\cap U$-slow Hamiltonian function with compact support on $U$.
Define the Hamiltonian function $\hat{F}\colon M\to\mathbb{R}$ by
\begin{equation*}
\hat{F}(x)=
\begin{cases}
F(x) & \text{if $x\in U$},\\
0 & \text{if $x\notin U$}.
\end{cases}
\end{equation*}
Then, by the definition of $L$-simplicity, $\hat{F}$ is an $L$-slow Hamiltonian function with compact support on $M$.
Then, by Proposition \ref{spectral detemined}, $c^L([L],\hat{F})=m(F)$.
By Corollary \ref{Hofer Ostrover}, $c^L([L],\hat{F})\leq \bar{E}(U;L)$.
Thus $m(F)\leq  \bar{E}(U;L)$.
By taking the supremum over all $L\cap U$-simple and $L\cap U$-slow functions with compact support on $U$, we complete the proof.
\end{proof}

To prove (3) of Proposition \ref{torus summary},
we use the following lemma.
\begin{lem}\label{torus ham}
Let $f\colon \mathbb{R}/R\mathbb{Z}\to\mathbb{R}$ be a smooth function with only two critical value $v_{\max}>v_{\min}$ and define a Hamiltonian function $F\colon T_R^2\to\mathbb{R}$ on $T_R^2$ by $F(x,y)=f(y)$.
Then $c^L([L_{\mathrm{m}}^s],F)=v_{\max}$.
\end{lem}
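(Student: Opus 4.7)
The plan is to pin down $c^L([L_{\mathrm{m}}^s],F)$ by first confining it to a two-element set via a spectrum calculation, and then excluding the smaller of the two via the non-degeneracy of the spectral norm (Theorem \ref{non-deg of spec norm}).

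First I would compute the action spectrum $\mathrm{Spec}(F;L_{\mathrm{m}}^s)$. Writing $\omega=dx\wedge dy$, the Hamiltonian vector field is $X_F=-f'(y)\partial_x$, so $\phi_F^t(x,y)=(x-tf'(y),y)$. A chord $z(t)=\phi_F^t(z(0))$ with both endpoints in $L_{\mathrm{m}}^s=\{x=s\}$ thus forces $f'(y_0)\in R\mathbb{Z}$. The relevant class is $0_{L_{\mathrm{m}}^s}$; because $\pi_2(T_R^2)=0$ and the map $\pi_1(L_{\mathrm{m}}^s)\to\pi_1(T_R^2)$ is injective, the homotopy exact sequence gives $\pi_2(T_R^2,L_{\mathrm{m}}^s)=0$. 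Hence cappings do not produce extra spectrum, and restricting to chords representing $0_{L_{\mathrm{m}}^s}$ (so $f'(y_0)=0$), I obtain $\mathrm{Spec}(F;L_{\mathrm{m}}^s)=\{v_{\max},v_{\min}\}$. By the spectrality property, $c^L([L_{\mathrm{m}}^s],F)\in\{v_{\max},v_{\min}\}$.

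Next, comparing $F$ with constant Hamiltonians via the Lipschitz and Hamiltonian-shift properties of Proposition \ref{LZ} gives $v_{\min}\le c^L([L_{\mathrm{m}}^s],F)\le v_{\max}$, and symmetrically, since $F$ is autonomous, $\bar F$ is generated by the Hamiltonian $-F$ which takes values in $[-v_{\max},-v_{\min}]$, so $c^L([L_{\mathrm{m}}^s],\bar F)\le -v_{\min}$.

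Finally, I would rule out the value $v_{\min}$ using Theorem \ref{non-deg of spec norm}. Since $f$ is nonconstant, $f'$ is continuous and takes the value $0$, so its image is not contained in the discrete set $R\mathbb{Z}$; therefore $\phi_F(L_{\mathrm{m}}^s)\ne L_{\mathrm{m}}^s$, and Theorem \ref{non-deg of spec norm} applies to yield
\[
c^L([L_{\mathrm{m}}^s],F)+c^L([L_{\mathrm{m}}^s],\bar F)>0.
\]
If $c^L([L_{\mathrm{m}}^s],F)$ equaled $v_{\min}$, the combined upper bound would give $v_{\min}+(-v_{\min})=0$, contradicting strict positivity. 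Hence $c^L([L_{\mathrm{m}}^s],F)=v_{\max}$.

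The only slightly delicate point is the spectrum computation, which hinges on $\pi_2(T_R^2,L_{\mathrm{m}}^s)=0$; everything else is a direct combination of Proposition \ref{LZ} with the non-degeneracy Theorem \ref{non-deg of spec norm} already proved in Section \ref{non-deg section}.
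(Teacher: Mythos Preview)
Your proof is correct and follows essentially the same approach as the paper: compute the spectrum, invoke spectrality, and eliminate $v_{\min}$ using the non-degeneracy Theorem~\ref{non-deg of spec norm}. The paper is terser (it simply asserts $\mathrm{Spec}(F;L_{\mathrm{m}}^s)=\{v_{\max},v_{\min}\}$ and $\mathrm{Spec}(\bar F;L_{\mathrm{m}}^s)=\{-v_{\max},-v_{\min}\}$ and applies spectrality to both), whereas you supply the details of the spectrum computation, verify $\phi_F(L_{\mathrm{m}}^s)\neq L_{\mathrm{m}}^s$ explicitly, and bound $c^L([L_{\mathrm{m}}^s],\bar F)\le -v_{\min}$ via Lipschitz rather than via spectrality of $\bar F$; these are cosmetic differences only.
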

\begin{proof}
By the definition of the spectrum, $\mathrm{Spec}(F;L_{\mathrm{m}}^s)=\{v_{\max},v_{\min}\}$ and $\mathrm{Spec}(\bar{F};L_{\mathrm{m}}^s)=\{-v_{\max},-v_{\min}\}$.
However, by Theorem \ref{non-deg of spec norm}, $c^{L_{\mathrm{m}}^s}([L_{\mathrm{m}}^s],F)+c^{L_{\mathrm{m}}^s}([L_{\mathrm{m}}^s],\bar{F})>0$.
Thus by the spectrality ((6) of Proposition \ref{LZ}), $c^{L_{\mathrm{m}}^s}([L_{\mathrm{m}}^s],F)=v_{\max}$ and $c^{L_{\mathrm{m}}^s}([L_{\mathrm{m}}^s],\bar{F})=-v_{\min}$.
\end{proof}

\begin{proof}[Proof of (3) of Proposition \ref{torus summary}]
Fix a Hamiltonian function $H\colon S^1\times T_R^2\to\mathbb{R}$.
Take a function $f\colon \mathbb{R}/R\mathbb{Z}\to\mathbb{R}$ with only two critical values $v_{\max}=\inf_{S^1\times L_{\mathrm{l}}^t}H>v_{\min}$ such that $F(x,y)\leq H(x,y)$ where $F\colon T_R^2\to\mathbb{R}$ is a Hamiltonian function on $T_R^2$ defined by $F(x,y)=f(x)$.
Then by Lemma \ref{torus ham}, $c^{L_{\mathrm{m}}^s}([L_{\mathrm{m}}^s],kF)=kv_{\max}=k\inf_{S^1\times L_{\mathrm{l}}^t}H$ for any positive integer $k$.
By the monotonicity of Lagrangian spectral invariants, $\zeta_{[L_{\mathrm{m}}^s]}^{L_{\mathrm{m}}^s}(H)\geq\zeta_{[L_{\mathrm{m}}^s]}^{L_{\mathrm{m}}^s}(F)=\inf_{S^1\times L_{\mathrm{l}}^t}H$.
\end{proof}

We can prove the following proposition similarly to (3) of Proposition \ref{torus summary}.

\begin{prop}\label{Riemannian}
Let $L$ be a monotone Lagrangian submanifold of a closed symplectic manifold $(M,\omega)$ with $N_L\geq2$, $\omega(\pi_2(M,L))=\{0\}$ and $QH_\ast(L)\neq0$.
Let $L^\prime$ be a compact Lagrangian submanifold of $(M,\omega)$.
Assume that there are a  Weinstein coordinate $w\colon W\to T^\ast L^\prime$ of $L^\prime$ (note that $w(L^\prime)$ is the zero section of $T^\ast L^\prime$) and a point $q$ of $L^\prime$ such that
\[w(W\cap L)=T_q^\ast L^\prime\cap \mathrm{Im}(w).\] 
Also assume that $L^\prime$ is diffeomorphic to a torus or a negatively curved Riemannian manifold and the map $\iota_\ast\pi_1(L^\prime,q)\to\pi_1(M,q)\to\pi_1(M,L,q)$ is injective,
where $\iota$ is the homomorphism induced from the inclusion map $\iota\colon (L^\prime,q,q)\to(M,L,q)$.
Then $L^\prime$ is $(L,\mathtt{a})$-heavy for any non-trivial idempotent $\mathtt{a}$.
\end{prop}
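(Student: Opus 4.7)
The plan is to imitate the proof of (3) of Proposition \ref{torus summary}, constructing an autonomous Hamiltonian $F$ adapted to the Weinstein neighborhood $W$ in which $L\cap W$ is a cotangent fiber at the point $q\in L'$.

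First I would fix a Riemannian metric $g$ on $L'$---the flat metric when $L'$ is a torus and the given negatively curved metric otherwise---and use it to define $|p|^2$ on $T^\ast L'$. For a given continuous Hamiltonian $H\colon S^1\times M\to\mathbb{R}$, set $v_{\max}=\inf_{S^1\times L'}H$, pick $v_{\min}\leq\inf_{S^1\times M}H-1$, and choose a smooth profile $\psi\colon[0,\infty)\to[v_{\min},v_{\max}]$ with $\psi(0)=v_{\max}$, $\psi'(0)=0$, $\psi$ strictly decreasing on $(0,r_0)$, and $\psi\equiv v_{\min}$ on $[r_0,\infty)$. Here $r_0$ is taken small enough that $w^{-1}(\{|p|^2\leq r_0\})$ is compactly contained in $W$. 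Define $F\colon M\to\mathbb{R}$ by $F=\psi(|p|^2)\circ w$ on $W$ and $F\equiv v_{\min}$ outside $W$. By compactness of $M$ and continuity of $H$, taking $r_0$ small enough ensures $F\leq H$ pointwise on $S^1\times M$ while $F(q)=v_{\max}$.

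Next I would analyze $\operatorname{Spec}(kF;L)$. In the Weinstein coordinates the Hamiltonian flow of $\psi(|p|^2)$ is a reparametrization of the geodesic flow of $g$, so every Hamiltonian chord of $F$ from $L\cap W$ to $L\cap W$ projects to a based geodesic loop of $(L',g)$ at $q$. Under the assumption that $L'$ is a torus with the flat metric or a negatively curved manifold, Hadamard's theorem (or flatness) guarantees that each based homotopy class at $q$ contains a unique geodesic representative; in particular the trivial class is represented only by the constant loop. The injectivity of $\iota_\ast\colon\pi_1(L',q)\to\pi_1(M,L,q)$ then forces every non-constant such chord to lie outside $\Omega_0(L)$. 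The remaining chords with endpoints in $L$ are constant chords at critical points of $F$, namely the point $q\in L'\cap L$ with value $v_{\max}$ and points where $F\equiv v_{\min}$. Because $\omega(\pi_2(M,L))=0$, the action is independent of the capping, so $\operatorname{Spec}(F;L)=\{v_{\max},v_{\min}\}$ and similarly $\operatorname{Spec}(kF;L)=\{kv_{\max},kv_{\min}\}$ for every positive integer $k$.

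Now I would pin down the value of the spectral invariant. After an arbitrarily small perturbation ensuring $\phi_F(L)\neq L$, Theorem \ref{non-deg of spec norm} gives $c^L(\mathtt{a},kF)+c^L(\mathtt{a},\overline{kF})>0$. Since $\operatorname{Spec}(\overline{kF};L)=\{-kv_{\max},-kv_{\min}\}$, the only way the sum of two spectral values can be positive is $c^L(\mathtt{a},kF)=kv_{\max}$ and $c^L(\mathtt{a},\overline{kF})=-kv_{\min}$, for every non-trivial idempotent $\mathtt{a}$. Therefore $\zeta_{\mathtt{a}}^L(F)=\lim_{k\to\infty}c^L(\mathtt{a},kF)/k=v_{\max}$. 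Since $F\leq H$ pointwise, the Lipschitz continuity in Proposition \ref{LZ}(1) passes to the asymptotic invariant to yield $\zeta_{\mathtt{a}}^L(H)\geq\zeta_{\mathtt{a}}^L(F)=\inf_{S^1\times L'}H$, which is the desired $(L,\mathtt{a})$-heaviness of $L'$.

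The main obstacle is the chord classification: certifying that the only Hamiltonian chords of $F$ in $\Omega_0(L)$ are the constants at critical points. This combines two ingredients---the injectivity hypothesis on $\iota_\ast$ and the uniqueness of geodesic loop in each based homotopy class on $(L',g)$. Hadamard's theorem handles the negatively curved case and the flat metric handles the torus; without either the injectivity or the uniqueness, some non-constant chord could sneak into $\Omega_0(L)$ with intermediate action in $(v_{\min},v_{\max})$ and spoil the two-value spectrum on which the Theorem \ref{non-deg of spec norm} argument rests. A secondary but important technical point is constructing $\psi$ and $r_0$ so that $F\leq H$ pointwise is compatible with $F(q)=v_{\max}$; this uses compactness of $M$ and the equality $v_{\max}=\inf_{S^1\times L'}H$ together with continuity of $H$ near $L'$.
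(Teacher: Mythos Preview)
Your approach is correct and is exactly the route the paper indicates (``similarly to (3) of Proposition~\ref{torus summary}''): build a radial $F=\psi(|p|^2)$ in the Weinstein neighborhood, use the geodesic-flow interpretation together with the curvature hypothesis and the injectivity of $\iota_\ast$ to exclude non-constant chords from $\Omega_0(L)$, deduce the two-value spectrum, and then invoke Theorem~\ref{non-deg of spec norm} plus monotonicity.

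Two small points should be tightened. First, the clause ``after an arbitrarily small perturbation ensuring $\phi_F(L)\neq L$'' is both unnecessary and risky: a generic perturbation can create new contractible chords and destroy the two-value spectrum on which the whole argument rests. It is better to observe directly that $\phi_F(L)\neq L$: for $p\in T_q^\ast L'$ with $0<|p|^2<r_0$ small enough that the corresponding reparametrized geodesic has length below the injectivity radius at $q$, the image $\phi_F(q,p)$ lies over a point $\neq q$, hence outside $L\cap W=T_q^\ast L'\cap\mathrm{Im}(w)$. Second, the inequality $F\le H$ cannot in general be arranged with $v_{\max}=\inf_{S^1\times L'}H$ exactly, since $F\equiv v_{\max}$ on all of $L'$ (not just at $q$) while $H$ may equal $v_{\max}$ at some $(t_0,x_0)\in S^1\times L'$ and dip below just off the zero section there. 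The standard fix is to take $v_{\max}=\inf_{S^1\times L'}H-\delta$, obtain $\zeta_{\mathtt a}^L(H)\ge\inf_{S^1\times L'}H-\delta$, and let $\delta\to 0$.
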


For example, we can apply Proposition \ref{Riemannian} to a Riemann surface with higher genus.

We give proofs of the other parts of Proposition \ref{torus summary}.

\begin{proof}[Proof of (1) of Proposition \ref{torus summary}]
If $s=s^\prime$, $L_{\mathrm{m}}^{s^\prime}=L_{\mathrm{m}}^s$ is an $L_{\mathrm{m}}^s$-stem and in particular, $(L_{\mathrm{m}}^s,[L_{\mathrm{m}}^s])$-superheavy.
\end{proof}

\begin{proof}[Proof of (2) of Proposition \ref{torus summary}]
If $s\neq s^\prime$, $L_{\mathrm{m}}^{s^\prime}\cap L_{\mathrm{m}}^s=\emptyset$.
Thus, by (1) of Proposition \ref{torus summary} and Proposition \ref{hv shv intersect}, $L_{\mathrm{m}}^{s^\prime}$ is not $(L_{\mathrm{m}}^s,[L_{\mathrm{m}}^s])$-heavy.
\end{proof}

\begin{proof}[Proof of (4) of Proposition \ref{torus summary}]
Take $t^\prime\in\mathbb{R}/\mathbb{Z}$ with $t^\prime\neq t$.
Then, by (3) of Proposition \ref{torus summary}, $L_{\mathrm{l}}^{t^\prime}$ is $(L_{\mathrm{m}}^s,[L_{\mathrm{m}}^s])$-heavy.
Since $L_{\mathrm{l}}^t\cap L_{\mathrm{l}}^{t^\prime}=\emptyset$, by Proposition \ref{hv shv intersect}, $L_{\mathrm{l}}^t$ is not $(L_{\mathrm{m}}^s,[L_{\mathrm{m}}^s])$-superheavy.
\end{proof}

\begin{proof}[Proof of (5) of Proposition \ref{torus summary}]
For any $s,s^\prime,t$, $L_{\mathrm{m}}^{s^\prime}\cup L_{\mathrm{l}}^t$ is an $L_{\mathrm{m}}^s$-stem and in particular, $(L_{\mathrm{m}}^s,[L_{\mathrm{m}}^s])$-superheavy.
\end{proof}

\section{Proof of Theorem \ref{main monotone just}}\label{prf of main theorem}
In order to prove Theorem \ref{main monotone just}, we give an upper bound of the spectral invariant associated with a Hamiltonian function $F\colon S^1\times M\times\mathbb{A}(R)\to\mathbb{R}$ such that  $\mathsf{Ch}(F;L\times \mathbb{A}(R)_0,L\times \mathbb{A}(R)_s,\alpha_s)=\emptyset$.
Here, for $R=(R_1,\ldots,R_n)\in (\mathbb{R}_{>0})^n$ and a positive real number $\epsilon$ with  $\epsilon <\min\{R_1,\ldots,R_n\}$, let $R(\epsilon)$ denote $(R_1-\epsilon,\ldots,R_n-\epsilon)$ and 
$I_{R(\epsilon)}^n$ denote $(\epsilon,R_1)\times\cdots\times (\epsilon,R_n)\subset (\mathbb{R}_{>0})^n$.
For $R=(R_1,\ldots,R_n)\in (\mathbb{R}_{>0})^n$, $T_R^n$ is defined to be $\mathbb{R}/R_1\mathbb{Z}\times\cdots\mathbb{R}/R_n\mathbb{Z}$ and we set the symplectic form $\omega_0=dp_1\wedge dq_1+\cdots+dp_n\wedge dq_n$ on $T^n_R\times T^n$ with the coordinates $(p,q)=(p_1,\ldots,p_n,q_1,\ldots,q_n)$.

\begin{prop}\label{Irie and Seyfaddini}
Let $L$ be a $\lambda$-monotone Lagrangian submanifold of a $2m$-dimensional closed symplectic manifold $(M,\omega)$ with $N_L\geq 2$ and $QH_\ast(L)\neq0$.
Let $s=(s_1,\ldots,s_n)$ and $R=(R_1,\ldots,R_n)$ be elements of $\mathbb{Z}^n$ and $(\mathbb{R}_{>0})^n$, respectively.
For a positive real number $\epsilon$ with $3\epsilon <\min\{R_1,\ldots,R_n\}$, let $U_\epsilon$ be the open subset of $T^n_R\times T^n$ defined by
\[U_\epsilon=\{(p,q)\in T^n_R\times T^n; p\in I_{R(3\epsilon)}^n\}.\]
We fix the symplectic form $\operatorname{pr}_1^\ast\omega +\operatorname{pr}_2^\ast\omega_0$ on $ M\times T_R^n\times T^n$, where $\operatorname{pr}_1\colon  M\times T_R^n\times T^n\to M$ and $\operatorname{pr}_2\colon  M\times T_R^n\times T^n \to T_R^n\times T^n$ are the projections defined by $\operatorname{pr}_1(x,p,q)=x$ and $\operatorname{pr}_2(x,p,q)=(p,q)$.
Then for any Hamiltonian function $F\colon S^1\times M\times U_\epsilon\to\mathbb{R}$ with compact support such that $\mathsf{Ch}(F;L\times (T^n_R\times \{0\}),L\times (T^n_R\times \{s\}),(0_L,\alpha_s))=\emptyset$,
\[c^{L\times (T^n_R\times \{0\})}([L\times (T^n_R\times \{0\})],F)<2\sum_{i=1}^n R_i\cdot |s_i|+\lambda(m+n).\]

\end{prop}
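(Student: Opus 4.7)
The plan is to convert the non-existence of $F$-chords from $L_0 := L \times (T_R^n \times \{0\})$ to $L_1 := L \times (T_R^n \times \{s\})$ in class $\alpha_s$ into a spectral bound, via an auxiliary ``translation'' Hamiltonian that isolates the topological obstruction produced by $s$.

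First I would construct an autonomous Hamiltonian $G\colon T_R^n \times T^n \to \mathbb{R}$ of the form $G(p,q) = \sum_i g_i(p_i)$, with each $g_i\colon (0,R_i)\to\mathbb{R}$ compactly supported in $[\epsilon, R_i-\epsilon]$ and satisfying $g_i'(p_i) = s_i$ on $[2\epsilon, R_i - 2\epsilon]$. The flow $\phi_G^1$ then acts as the translation $(p,q)\mapsto(p, q+s)$ on $\{p \in I_{R(2\epsilon)}^n\}$, and a direct calculation accounting for the cutoff transitions on both boundaries of the annulus yields $\|G\| \leq 2\sum_i R_i|s_i| + O(\epsilon)$. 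Pulling $G$ back trivially to $M \times T_R^n \times T^n$ preserves its Hofer norm.

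Next, using that $\phi_{G \natural \bar G}^1 = \mathrm{id}$, the homotopy invariance and triangle inequality (Proposition~\ref{LZ}) give
\[
c^{L_0}([L_0], F) = c^{L_0}([L_0], G \natural \bar G \natural F) \leq c^{L_0}([L_0], G) + c^{L_0}([L_0], \bar G \natural F).
\]
The first term is bounded by the Lipschitz property together with (5) of Proposition~\ref{LZ}, giving $c^{L_0}([L_0], G) \leq 2\sum_i R_i|s_i|$ up to the $O(\epsilon)$ error which we send to zero. For the second term, I would observe that the correspondence $z(t) \mapsto \phi_F^t(z(0))$ identifies chords of $\bar G \natural F$ from $L_0$ to $L_0$ (on the region where the cutoff is trivial) with chords of $F$ from $L_0$ to $\phi_G^1(L_0) = L_1$; concatenating with the translation path $t\mapsto\phi_{\bar G}^t$ contributes the inverse winding, so that the contractible (i.e. $0_{L_0}$-class) chords of $\bar G \natural F$ correspond exactly to chords of $F$ in class $\alpha_s$. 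The hypothesis $\mathsf{Ch}(F; L_0, L_1, \alpha_s) = \emptyset$ therefore eliminates all such contractible chords.

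Finally, spectrality (Proposition~\ref{LZ}(6)) combined with $\lambda$-monotonicity of $L_0$ would produce the strict bound $c^{L_0}([L_0], \bar G \natural F) < \lambda(m+n)$: the spectrum of $\bar G \natural F$ is a discrete $\lambda\mathbb{Z}$-family of action values parametrised by capping-disk Maslov indices, any PSS representative of $[L_0] \in QH_\ast(L_0)$ is supported on chords of Conley--Zehnder index at most $\dim L_0 = m + n$, and the forbidden extremal class is precisely the contractible one. Summing the two contributions yields the required estimate.

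The main obstacle will be the Maslov and homotopy-class bookkeeping in the final step: one must identify the correspondence between $\pi_1(N, L_0)$ classes of $\bar G \natural F$-chords and the classes $\alpha_s$ of $F$-chords precisely enough to conclude that \emph{contractible} chords are the excluded ones, and then extract the Maslov bound $m+n$ from the PSS description of the fundamental class. A perturbation argument (approximating $\bar G\natural F$ by non-degenerate Hamiltonians and invoking the $L^\infty$-continuity of spectral invariants) should then upgrade the non-strict bound $\lambda(m+n)$ to the strict inequality stated in the proposition.
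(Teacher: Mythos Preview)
Your high-level strategy matches the paper's: concatenate $F$ with a translation Hamiltonian on the torus factor and use the chord hypothesis, together with Conley--Zehnder/Maslov bookkeeping under monotonicity, to bound the spectral invariant. The paper packages the concatenation step as the general Proposition~\ref{cL} and then constructs the required auxiliary Hamiltonian explicitly.

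The gap is in your final step, and it is a logical one rather than a missing detail. You assert that the contractible chords of $\bar G\natural F$ correspond exactly to $F$-chords in class $\alpha_s$ and are therefore all eliminated, and then invoke spectrality to deduce $c^{L_0}([L_0],\bar G\natural F)<\lambda(m+n)$. These two claims are inconsistent: Lagrangian spectral invariants are computed from contractible chords only, so if that set were empty the Floer complex would vanish, contradicting $QH_*(L_0)\neq 0$. What actually happens is that the hypothesis kills contractible chords only in the region where your $G$ acts as a genuine translation; in the region where both $G$ and $F$ vanish, every point of $L_0$ gives a constant, hence contractible, chord, and it is these that must carry $[L_0]$. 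Extracting the action bound $\lambda(m+n)$ from them requires that they be isolated Morse critical points with Morse index equal to CZ-index, and that no spurious contractible chords arise in the cutoff zone of $G$. Your construction guarantees neither: the constant-chord region is open (hence degenerate), and the transition zone of $g_i'$ is uncontrolled. The paper handles both issues: Lemma~\ref{function on torus} designs each $H^{R_i,\epsilon,s_i}$ so that its only chords on $T_{R_i}\times\{0\}$ sit at two specified critical points, and a $C^2$-small Morse function pulled back from $L$ is glued in on the complement of $U_\epsilon$ via a cutoff $\rho$, forcing every contractible chord of the concatenation to be a constant at an explicitly known critical point. The index calculation in Proposition~\ref{cL} then yields the bound directly.
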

To prove Proposition \ref{Irie and Seyfaddini}, we use the following proposition.
For a smooth path $z\colon [0,1]\to M$, let $\mathrm{ev}(z)$ denote the point $z(0)$.



\begin{prop}\label{cL}
Let $W$ be an open subset of a $2w$-dimensional connected closed symplectic manifold $(\hat{W},\omega)$, $\hat{Z}_0,\hat{Z}_1$ compact $\lambda$-monotone Lagrangian submanifolds of $\hat{W}$  with $N_{\hat{Z}_0}\geq 2$ and $QH_\ast(\hat{Z}_0)\neq0$ and $\alpha$ a homotopy class of $\pi_1(\hat{W},\hat{Z}_0\cup\hat{Z}_1)$.
Assume that a Hamiltonian function $H\colon \hat{W}\to\mathbb{R}$ satisfies the following conditions.
\begin{itemize}
\item $\mathrm{Supp}(H)\subset W$
\item $\phi_H(\hat{Z}_1)=\hat{Z}_0$ and $[\gamma_H^x]=\bar{\alpha}$ for any point $x$ in $Z_1$ where $Z_1=W\cap\hat{Z}_1$,
\item 
Set $Z_0=W\cap \hat{Z}_0$.
On some Weinstein neighborhood of $Z_0$, $H$ is the pullback of a Morse function $\tilde{H}\colon Z_0\to\mathbb{R}$, 
\item $\mathrm{ev}(\mathsf{Ch}(H;Z_0))=\operatorname{Crit}(H|_{Z_0})$,
\item $\operatorname{ind}_{Morse}(x)=\operatorname{ind}_{\mathrm{CZ}}([x,c_x])$ for any point $x$ in $\mathrm{Crit}(H|_{Z_0})$.
\end{itemize}

Then for any Hamiltonian function $F\colon S^1\times W\to\mathbb{R}$ with compact support such that $\mathsf{Ch}(F;Z_0,Z_1,\alpha)=\emptyset$,
 \[c^{\hat{Z}_0}([\hat{Z}_0],F)\leq 2||H||_{L^\infty}+\lambda w.\]

\end{prop}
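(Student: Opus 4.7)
My plan is to reduce the bound to an analysis of the concatenation $K := H \natural F$. Since $\bar H \natural K$ generates the same Hamiltonian path as $F$, homotopy invariance and the triangle inequality (parts (2) and (3) of Proposition~\ref{LZ}) combine to give
\[
c^{\hat Z_0}([\hat Z_0], F) \leq c^{\hat Z_0}([\hat Z_0], \bar H) + c^{\hat Z_0}([\hat Z_0], K),
\]
and Lipschitz continuity (part (1)) together with $c^{\hat Z_0}([\hat Z_0], 0) = 0$ (part (5)) yields $c^{\hat Z_0}([\hat Z_0], \bar H) \leq ||H||_{L^\infty}$. The task thus reduces to showing $c^{\hat Z_0}([\hat Z_0], K) \leq ||H||_{L^\infty} + \lambda w$.

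To prove this second inequality I would invoke spectrality (part (6) of Proposition~\ref{LZ}) after a detailed analysis of the capped $K$-chords $[w, \hat w]$ from $\hat Z_0$ to $\hat Z_0$ in the contractible class. Each such $K$-chord factors as $w(t) = \phi_H^t(z(t))$ with $z(t) = \phi_F^t(w(0))$, and the boundary condition $w(1) \in \hat Z_0$ forces $z(1) \in \phi_H^{-1}(\hat Z_0) = \hat Z_1$. Standard homotopy of the path $t \mapsto \phi_H^t \phi_F^t(w(0))$ to the concatenation of $z$ with $\gamma_H^{z(1)}$ splits the class as $[w] = [z] \cdot [\gamma_H^{z(1)}]$. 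When $w(0) \in Z_0$, the flow of $F$ keeps $z$ inside $W = \mathrm{Supp}(F)$, so $z(1) \in Z_1$ and the hypothesis $[\gamma_H^{x}] = \bar \alpha$ forces $[z] = \alpha$ whenever $[w]$ is trivial, contradicting $\mathsf{Ch}(F; Z_0, Z_1, \alpha) = \emptyset$. Consequently the only contractible $K$-chords correspond either to trajectories emanating from $\mathrm{Crit}(H|_{Z_0})$ via the Weinstein-neighborhood description of $H$ (using the hypothesis $\mathrm{ev}(\mathsf{Ch}(H; Z_0)) = \mathrm{Crit}(H|_{Z_0})$) or to constants at points of $(\hat Z_0 \cap \hat Z_1) \setminus W$; in both cases the action integral $\int_0^1 K_t(w(t))\,dt$ is controlled by $||H||_{L^\infty}$, and the capping contribution $-\omega(\hat w)$ is controlled by $\lambda |\mu(\hat w)|$ via $\lambda$-monotonicity.

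The desired $||H||_{L^\infty} + \lambda w$ bound then follows once one shows that the cappings $\hat w$ contributing to a Floer cycle representing $\Phi([\hat Z_0])$ have $|\mu(\hat w)| \leq w$. This step relies crucially on the hypothesis $\mathrm{ind}_{\mathrm{Morse}}(x) = \mathrm{ind}_{CZ}([x, c_x])$, which anchors the degree of the generators sitting in the Weinstein neighborhood of $Z_0$ and permits $[\hat Z_0]$ to be realized by generators lying in a bounded Conley--Zehnder window about $w = \dim \hat Z_0$. The main obstacle I foresee is precisely this index/capping bookkeeping, together with the need to handle possibly degenerate $K$-chords coming from fixed points of $\phi_F$ in $\hat Z_0 \cap \hat Z_1$ outside the Weinstein model; both should be addressed by a small non-degenerate perturbation of $K$ (compatible with the hypotheses on $H$) and passage to the limit via Lipschitz continuity.
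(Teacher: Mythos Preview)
Your approach is essentially the paper's: form a concatenation $K$ of $H$ and $F$, use the no-chord hypothesis on $F$ to eliminate contractible $K$-chords based in $Z_0$, bound $c^{\hat Z_0}([\hat Z_0],K)$ via spectrality, the degree of $[\hat Z_0]$, and $\lambda$-monotonicity, and then recover the bound on $F$ by a triangle/Lipschitz estimate costing one extra $\|H\|_{L^\infty}$. The paper builds $K$ as a reparametrised concatenation running $H$ on $[0,\tfrac12]$ and $F$ on $[\tfrac12,1]$ (so $\phi_K=\phi_F\phi_H$), whereas you take $K=H\natural F$ (so $\phi_K=\phi_H\phi_F$); your order has the pleasant feature that the intermediate point $z(1)=\phi_F(w(0))$ lands in $Z_1$, exactly where the hypothesis $[\gamma_H^x]=\bar\alpha$ is stated, so your homotopy-class step reads more cleanly than the paper's corresponding line.

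One point to straighten out: your own case analysis shows that \emph{no} contractible $K$-chord can start in $Z_0$, so the alternative ``trajectories emanating from $\mathrm{Crit}(H|_{Z_0})$'' in your conclusion is actually empty, and the hypotheses $\mathrm{ev}(\mathsf{Ch}(H;Z_0))=\mathrm{Crit}(H|_{Z_0})$ and $\mathrm{ind}_{\mathrm{Morse}}=\mathrm{ind}_{CZ}$ are not invoked where you say they are. The only surviving $K$-chords are the degenerate constants on $\hat Z_0\setminus W$ (where $K\equiv 0$), and it is for \emph{these} that the index/capping bookkeeping must be done---after the small Morse perturbation you already anticipate---to get $-w\le\mu(A)\le 0$ and hence the $\lambda w$ bound. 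The paper's own write-up glosses over exactly this degeneracy (it asserts finiteness of $\mathsf{Ch}(K;\hat Z_0)$ and locates the carrier at $\mathrm{Crit}(\tilde H)$), so you are not missing anything the paper supplies. Finally, ``$W=\mathrm{Supp}(F)$'' is a slip: what you need is $\mathrm{Supp}(F_t)\subset W$, whence $\phi_F^t$ preserves $W$ and $z(1)\in W\cap\hat Z_1=Z_1$.
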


\begin{proof}
To give an upper bound of the spectral invariant associated with $F$, we consider the concatenation of $\phi_F^1$ and a Hamiltonian diffeomorphism $\phi_H^1$ with trajectories in $\bar{\alpha}$.

Let  $K \colon S^1\times \hat{W}\to\mathbb{R}$ be a Hamiltonian function defined by
\begin{equation*}
K(t,x)=
\begin{cases}
\frac{\partial\chi}{\partial t}(t)H(\chi(t),x) & \text{when }t\in[0,\frac{1}{2}], \\
\frac{\partial\chi}{\partial t}(t-\frac{1}{2})F(\chi(t-\frac{1}{2}),x)& \text{when }t\in[\frac{1}{2},1],
\end{cases}
\end{equation*}
where $\chi\colon[0,\frac{1}{2}]\to[0,1]$ is the function defined in the proof of Proposition \ref{Ostrover}.
We claim
 \[c^{\hat{Z}_0}([\hat{Z}_0],K)\leq ||H||_{L^\infty}+\lambda w.\]

Let $[z,\hat{z}]\in\tilde{\Omega}_0(\hat{Z}_0)$ and define $x$ by $x=z(0)\in\hat{Z}_0$.
If $x\in Z_0$, by the assumption of $H$, $[\gamma_H^x]=\bar{\alpha}$.
Since the path $\gamma_{K}^x$ is the concatenation of the paths $\gamma_H^x$ and $\gamma_F^{\phi_H(x)}$ up to parameter change, $\mathsf{Ch}(F;Z_0,Z_1,\alpha)=\emptyset$ implies 
$\gamma_K^x\notin \Omega_0(\hat{Z}_0)$ for any $x\in Z_0$.
If $x\notin Z_0$, then $\phi_H(x)\notin Z_0$.
Thus $\gamma_K^x$ is equal to $\gamma_H^x$ up to parameter change and $\int_0^1H(t,\gamma_H^x(t))dt=\int_0^1K(t,\gamma_K^x(t))dt$.
Therefore there exists the natural inclusion map $\iota\colon  \tilde{\mathsf{Ch}}(K;Z_0)\to \tilde{\mathsf{Ch}}(H;Z_0)$ which preserves values of the action functionals and the Conley-Zehnder indices.

We give an upper bound of the critical value of the action functional $\mathcal{A}_K^{\hat{Z}_0}$ which attains the fundamental class $[\hat{Z}_0]$ of $QH_\ast(\hat{Z}_0)$.
Since every element of $\mathsf{Ch}(H;\hat{Z}_0)$ is a constant path, every element of $\mathsf{Ch}(K;\hat{Z}_0)$ is also a constant path.
Since  $\mathsf{Ch}(K;\hat{Z}_0)$ is a finite set and $\hat{Z}_0$ is monotone, $\mathcal{A}_K^{\hat{Z}_0}(\tilde{\mathsf{Ch}}(H;\hat{Z}_0))$ is a discrete subset of $\mathbb{R}$.
Thus $c^{\hat{Z}_0}([\hat{Z}_0],K)$ is attained by a 1-length trajectory of the Conley-Zehnder index $w$ that is the dimension of the fundamental class.
Since every element of $\mathsf{Ch}(K;\hat{Z}_0)$ is a constant path, there exist a point $x$ in $\mathrm{Crit}(\tilde{H})$ and $A\in\pi_2(\hat{W},\hat{Z}_0)$ such that $\operatorname{ind}_{\mathrm{CZ}}([x,c_x\natural A])=w$ and $c^{\hat{Z}_0}([\hat{Z}_0],K)=\mathcal{A}_K^{\hat{Z}_0}([x,c_x\natural A])$.
Then, by the assumption,
\begin{align*}
&\operatorname{ind}_{\mathrm{Morse}}(x)-\mu(A)\\
      &=\operatorname{ind}_{\mathrm{CZ}}([x,c_x])-\mu(A)\\
      & =\operatorname{ind}_{\mathrm{CZ}}([x,c_x\natural A])\\
      & =w.
\end{align*}
Since $0\leq\operatorname{ind}_{\mathrm{Morse}}(x)\leq w$,
\[-w\leq \mu(A)\leq 0.\]
Since $\iota$ preserves values of the action functionals,
\begin{align*}
\mathcal{A}_K^{\hat{Z}_0}([x,c_x\natural A])&=\mathcal{A}_H^{\hat{Z}_0}([x,c_x\natural A])\\
& =H(x)-\omega(A)\\
      & =H(x)-\lambda \mu(A).
\end{align*}
Thus, by $-w\leq \mu(A)\leq 0$ and $\lambda\geq0$, $c^{\hat{Z}_0}([\hat{Z}_0],K)\leq||H||_{L^\infty}+\lambda w$.
By $||\bar{H}||_{L^\infty}=||H||_{L^\infty}$, the Lipschitz continuity and the homotopy invariance for spectral invariants (Proposition \ref{LZ} (1) and (2)) imply
\begin{align*}
c^{\hat{Z}_0}([\hat{Z}_0],F)& \leq c^{\hat{Z}_0}([\hat{Z}_0],K)+||\bar{H}||_{L^\infty}\\
            & =2||H||_{L^\infty}+\lambda w.
\end{align*}
\end{proof}
The idea of using a Hamiltonian function $H$ satisfying the above conditions comes from Irie's paper \cite{Ir}.
Seyfaddini's techniques of using the monotonicity assumption \cite{S} are also important in our proof.

To prove Proposition \ref{Irie and Seyfaddini}, we construct the Hamiltonian function $H$ in Proposition  \ref{cL} using $H^{R,\epsilon,e}$ given by the following lemma.
\begin{lem}\label{function on torus}
Let $R$, $\epsilon$ be positive real numbers such that $3\epsilon <R$. 
Let $w_1$ and $w_2$ denote the points $(\epsilon,0)$ and $(2\epsilon,0)$ in $T^1_R\times T^1$, respectively.
For an integer $e$, there exists a Hamiltonian function $H^{R,\epsilon,e}\colon T^1_R\times T^1\to\mathbb{R}$ satisfying the following conditions.

\begin{itemize}
\item  $H^{R,\epsilon,e}(p,q)=-ep$ on $U_\epsilon=(3\epsilon,R)\times T^1$,
\item $\operatorname{Crit} (H^{R,\epsilon,e}|_{T_R^1\times  \{0\}})=\{w_1,w_2\}$,
\item $H^{R,\epsilon,e}|_{T_R^1\times  \{0\}}$ is a Morse function,
\item $||H^{R,\epsilon,e}||_{L^\infty}<(R-\epsilon)\cdot |e|$,
\item $dH^{R,\epsilon,e}$ is $C^1$-small near $w_1,w_2$,
\item $\operatorname{ev}(\mathsf{Ch}(H^{R,\epsilon,e};T_R^1\times  \{0\}))=\operatorname{Crit}(H^{R,\epsilon,e}|_{T_R^1\times  \{0\}})$.
\end{itemize}
Here $\operatorname{Crit}(H^{R,\epsilon,e})$ is the set of critical points of $H^{R,\epsilon,e}$.
\end{lem}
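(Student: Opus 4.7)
The plan is to take $H^{R,\epsilon,e}(p,q)=h(p)$ depending only on $p$, for a smooth function $h\colon T_R^1\to\mathbb{R}$. With this ansatz the Hamiltonian vector field is $X_H=h'(p)\partial_q$ (in the convention $\omega(X_H,\cdot)=-dH$), so every integral curve starting at $(p_0,0)\in T_R^1\times\{0\}$ is $z(t)=(p_0,h'(p_0)t)$. Such a curve ends back on $T_R^1\times\{0\}$ at $t=1$ iff $h'(p_0)\in\mathbb{Z}$; viewed as an element of $\pi_1(T_R^1\times T^1,T_R^1\times\{0\})\cong\mathbb{Z}$ (the quotient of $\pi_1$ by the $p$-loops, generated by the $q$-loop) it represents $h'(p_0)$, so it lies in the trivial class iff $h'(p_0)=0$. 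This yields the identity $\operatorname{ev}(\mathsf{Ch}(H;T_R^1\times\{0\}))=\operatorname{Crit}(h)=\operatorname{Crit}(H|_{T_R^1\times\{0\}})$ for free, regardless of the particular shape of $h$.

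What remains is the construction of $h$. Fixing the fundamental domain $[0,R]$ of $T_R^1$, the prescription $h(p)=-ep$ on $U_\epsilon=(3\epsilon,R)$ (understood up to an additive constant, which does not affect the Hamiltonian flow and which we will absorb to control the $L^\infty$ norm below) determines the boundary data of $h$ on the complementary arc $[0,3\epsilon]$, with $0\sim R$: the two endpoint slopes both equal $-e$ while the two endpoint values differ by $e(R-3\epsilon)$. Any smooth extension therefore has a local minimum and a local maximum on $[0,3\epsilon]$; I will arrange for the interpolation to have exactly one of each, located at $p=\epsilon$ and $p=2\epsilon$, by a standard bump-function construction that grafts two quadratic caps of arbitrarily small curvature at $\epsilon$ and $2\epsilon$ onto a smooth strictly monotone function matching the boundary data. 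The smallness of the curvature guarantees that $dh$ is $C^1$-small on neighborhoods of $w_1=(\epsilon,0)$ and $w_2=(2\epsilon,0)$ while ensuring that $h$ is Morse at both points.

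The main delicate point is the bound $\|H^{R,\epsilon,e}\|_{L^\infty}<(R-\epsilon)|e|$, which at face value conflicts with the formula $-ep$ on $U_\epsilon$ since $|{-ep}|$ there can approach $R|e|$. This is resolved by the additive constant: taking the shift to be $e(R+3\epsilon)/2$ centers $h$ symmetrically on $U_\epsilon$, so $|h|<|e|(R-3\epsilon)/2$ on $U_\epsilon$, and the shifted endpoint values on $[0,3\epsilon]$ are $\mp e(R-3\epsilon)/2$. The extremal values $h(\epsilon)$ and $h(2\epsilon)$ can then be chosen anywhere inside the open interval $(-(R-\epsilon)|e|,(R-\epsilon)|e|)$, with a margin of $|e|(R+\epsilon)/2$ beyond the respective boundary values; by selecting them close to those boundary values the strict bound $|h|<(R-\epsilon)|e|$ is maintained on all of $T_R^1$, completing the construction.
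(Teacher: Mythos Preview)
Your approach is essentially the paper's: set $H^{R,\epsilon,e}(p,q)=\underline{H}(p)$ for a suitable one-variable function $\underline{H}$ on $T_R^1$ and read off all the conditions from properties of $\underline{H}$. Two points are worth noting. First, for the final condition $\operatorname{ev}(\mathsf{Ch}(H;T_R^1\times\{0\}))=\operatorname{Crit}(H|_{T_R^1\times\{0\}})$, the paper simply says this follows from $C^1$-smallness of $d\underline{H}$ near the critical points, whereas you give the cleaner argument that, since $\mathsf{Ch}(H;X)$ by definition records only chords in the trivial class of $\pi_1(M,X)$ and the chord through $(p_0,0)$ represents $h'(p_0)\in\mathbb{Z}\cong\pi_1(T_R^1\times T^1,T_R^1\times\{0\})$, the condition $h'(p_0)=0$ is forced; this is correct and does not actually require the $C^1$-smallness. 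Second, you are right to flag the $L^\infty$ bound: as literally written, $H=-ep$ on $(3\epsilon,R)$ already has $\sup|H|$ arbitrarily close to $R|e|>(R-\epsilon)|e|$, so the two conditions are incompatible without your additive-constant reading; the paper's proof lists the same two conditions on $\underline{H}$ without comment, so your centering $h\mapsto h+e(R+3\epsilon)/2$ is the right fix and is harmless since only $dH$ enters the Hamiltonian flow used downstream.
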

\begin{proof}

Let $\underline{H}^{R,\epsilon,e}\colon T^1_R\to\mathbb{R}$ be a function satisfying the following conditions.

\begin{itemize}
\item  $\underline{H}^{R,\epsilon,e}(p)=-ep$ on $U_\epsilon=(3\epsilon,R)$,
\item $\operatorname{Crit} (\underline{H}^{R,\epsilon,e})=\{\epsilon,2\epsilon\}$,
\item $\underline{H}^{R,\epsilon,e}$ is a Morse function,
\item $||\underline{H}^{R,\epsilon,e}||_{L^\infty}<(R-\epsilon)\cdot |e|$,
\item  $d\underline{H}^{R,\epsilon,e}$ is $C^1$-small near $p=\epsilon,2\epsilon$.
\end{itemize}
Define the Hamiltonian function $H^{R,\epsilon,e}\colon T^1_R\times T^1\to\mathbb{R}$ by $H^{R,\epsilon,e}(p,q)=\underline{H}^{R,\epsilon,e}(p)$.
Then, the last condition follows from $C^1$-smallness of $d\underline{H}^{R,\epsilon,e}$ near $p=\epsilon,2\epsilon$.
The other conditions immediately follow from the conditions of $\underline{H}^{R,\epsilon,e}\colon T^1_R\to\mathbb{R}$.
\end{proof}

\begin{proof}[Proof of Proposition \ref{Irie and Seyfaddini}]

To use Proposition \ref{cL}, we construct the Hamiltonian function $H$.
Define the Hamiltonian function $H^\prime\colon T^n_R\times T^n\to\mathbb{R}$ by
\[H^\prime(p,q)=\sum_{i=1}^nH^{R_i,\epsilon_i,e_i}(p_i,q_i).\]
Then $[\gamma_{H^\prime}^x]=-e \in\pi_1(T_R^n\times T^n,T_R^n\times\{0\})$ for any $x\in U_\epsilon$.
Thus we can take a neighborhood $V$ of $\bar{U}_\epsilon$ such that
\[\operatorname{ev}(\mathsf{Ch}(H^\prime;L\times (T_R^n\times\{0\})))\cap \bar{V}=\emptyset.\]
In order to compute the spectral invariant associated with $F$, we take a perturbation of $\mathrm{pr}_1^\ast H^\prime$
(recall that $\mathrm{pr}_1\colon M\times T_R^n\times T^n\to M$ is the first projection).
Let $\rho\colon T^n_R\times T^n\to[0,1]$ be a function such that
\begin{equation*}
\rho(p,q)=
\begin{cases}
1 & \text{for any }(p,q)\in (T^n_R\times T^n)\setminus V, \\
0 & \text{for any }(p,q)\in U_\epsilon.
\end{cases}
\end{equation*}
Let $G\colon M\to\mathbb{R}$ be a function satisfying the following conditions.
\begin{itemize}
\item 
On some Weinstein neighborhood of $L$, $G$ is the pullback of a $C^2$-small Morse function $\tilde{G}\colon L\to\mathbb{R}$, 
\end{itemize}
Define the Hamiltonian function $H\colon M\times T^n_R\times T^n\to\mathbb{R}$ by
\[H(y,p,q)=H^\prime(p,q)+\rho(p,q)\cdot G(y).\]
Since $\tilde{G}$ is sufficiently $C^2$-small, then 
\begin{itemize}
\item $\operatorname{ev}(\mathsf{Ch}(H;L\times (T_R^n\times\{0\})))\cap(M\times V)=\emptyset,$ and
\item there exist only finitely many points $y_1, \ldots, y_k$ in $M$ such that $\operatorname{Crit}(G)=\operatorname{ev}(\mathsf{Ch}(tG;L))=\{y_1, \ldots, y_k\}$ for any $t\in(0,1]$.
\end{itemize}
Since  $\operatorname{ev}(\mathsf{Ch}(H;L\times (T_R^n\times\{0\})))\cap(M\times V)=\emptyset$,
$\operatorname{ev}(\mathsf{Ch}(H;L\times (T_R^n\times\{0\})))\subset M\times((T_R^n\times T^n)\setminus V)$.
Since $\rho(p,q)=1$ for any $(p,q)\in (T^n_R\times T^n)\setminus V$,
\[H(y,p,q)=H^\prime(p,q)+G(y),\]
for any $(y,p,q)\in M\times((T_R^n\times T^n)\setminus V)$.
  Thus
  \[\operatorname{ev}(\mathsf{Ch}(H;L\times (T_R^n\times\{0\})))=\{(y_i,(w_{j_1},\ldots,w_{j_n}),(0,\ldots,0))\}_{i\in\{1,\ldots,k\}, j_1,\ldots,j_n\in\{1,2\}}=\operatorname{Crit}(H|_{L\times (T_R^n\times\{0\})}).\]
By conditions of $H^{R_i,\epsilon,e_i}$'s and $G$,
$dH$ is $C^1$-small near critical points of $H|_{L\times (T_R^n\times\{0\})}$.
Thus
  \[\operatorname{ind}_{\mathrm{Morse}}(x)=\operatorname{ind}_{\mathrm{CZ}}([x,c_x]),\]
  for any point $x$ in $\operatorname{Crit}(H|_{L\times (T_R^n\times\{0\})})$.  
Hence $H$ satisfies the conditions of Proposition \ref{cL} and thus we apply Proposition \ref{cL}.

By Proposition \ref{cL} and $||\bar{H}||_{L^\infty}=||H||_{L^\infty}$, the Lipschitz continuity and the homotopy invariance for spectral invariants (Proposition \ref{LZ} (1) and (2)) imply
\begin{align*}
c^{L\times \mathbb{A}(R)_0}([L\times \mathbb{A}(R)_0],F)& \leq 
2||H||_{L^\infty}+\lambda(m+n)\\
            & <2(\sum_{i=1}^n(R_i-\epsilon)\cdot |e_i|+||G||_{L^\infty})+\lambda(m+n).
\end{align*}
Since the Morse function $G$ is sufficiently $C^2$-small,
\[c^{L\times \mathbb{A}(R)_0}([L\times \mathbb{A}(R)_0],F)<2\sum_{i=1}^nR_i\cdot |e_i|+\lambda(m+n).\]
\end{proof}

\begin{proof}[Proof of Theorem \ref{main monotone just}]
Fix a positive real number $\epsilon$ such that $\epsilon <\min\{R_1,\ldots,R_n\}$ and take a Hamiltonian function $F\colon S^1\times M\times I^n_{R(\epsilon)}\times T^n\to\mathbb{R}$ with compact support such that $F|_{S^1\times Z\times T^n}\geq 2\sum_{i=1}^nR_i\cdot |e_i|+\max\{0,\lambda(m+n)\}$.
Assume $\mathsf{Ch}(F;L\times (T_R^n\times\{0\}),L\times (T_R^n\times\{s\}),(0_L,\alpha_s))= \emptyset$.
Then, Proposition \ref{Irie and Seyfaddini} and the triangle inequality ((3) of Proosition \ref{LZ}) imply
\[\zeta_{[L\times \mathbb{A}(R(\epsilon))_0]}^{L\times \mathbb{A}(R(\epsilon))_0}(F)< 2\sum_{i=1}^nR_i\cdot |e_i|+\max\{0,\lambda(m+n)\}.\]

By (3) of Proposition \ref{torus summary} and Theorem \ref{product}, $Z\times T^n$ is a $L\times \mathbb{A}(R(\epsilon))_0$-heavy subset.
Since Corollary \ref{module find 2} implies that $Z\times T^n$ is $[L\times \mathbb{A}(R(\epsilon))_0]$-heavy,  by Definition \ref{definition of heavy},
\[\zeta_{[L\times \mathbb{A}(R(\epsilon))_0]}^{L\times \mathbb{A}(R(\epsilon))_0}(F)\geq 2\sum_{i=1}^nR_i\cdot |e_i|+\max\{0,\lambda(m+n)\}.\]

These two inequalities contradict.
Since any Hamiltonian function $F\colon S^1\times M\times I^n_{R(0)}\times T^n\to\mathbb{R}$ with compact support has support in $S^1\times M \times I^n_{R(\epsilon)}\times T^n$ for some $\epsilon$, we complete the proof of Theorem \ref{main monotone just}.
\end{proof}

\section{Flexibility result}\label{flex section}

To prove Proposition \ref{displaceable main}, we use the following proposition which is a chord-version of Proposition 3.3.2 of \cite{BPS}.

\begin{prop}\label{displaceable prop}
Let $(N,\omega)$ be an open symplectic manifold, $Y_0,Y_1,Z$ subsets of $N$ and $\alpha$ a homotopy class in $\pi_1(N,Y_0\cup Y_1)$.
Assume that $Z$ is compact and there exists a Hamiltonian function $H\colon S^1\times N\to\mathbb{R}$ with compact support satisfying the following conditions.
\begin{itemize}
\item[(1)] $Y_0\cap\phi_H(Z)=\emptyset$.
\item[(2)] $\mathsf{Ch}(H;Y_1,Y_0,\bar{\alpha})=\emptyset$.
\end{itemize}
Then $C_{BEP}(N,Y_0,Y_1,Z,\alpha)=+\infty$.
\end{prop}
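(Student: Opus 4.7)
My plan is to establish $C_{BEP}(N, Y_0, Y_1, Z, \alpha) = +\infty$ by exhibiting, for each $K > 0$, a compactly supported Hamiltonian $F_K$ with $\inf_{S^1 \times Z} F_K \geq K$ and $\mathsf{Ch}(F_K; Y_0, Y_1, \alpha) = \emptyset$; since $K$ is arbitrary, the infimum defining $C_{BEP}$ cannot be finite. The construction will be a chord analogue of the classical displacement argument of Proposition 3.3.2 of \cite{BPS}: I combine a large autonomous bump supported near $Z$ with the time-reversed flow $\bar H$, so that any hypothetical $F_K$-chord reduces to a $\bar H$-chord of class $\alpha$ and thereby produces an $H$-chord of class $\bar\alpha$ forbidden by hypothesis (2).

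Concretely, using the compactness of $Z$ and hypothesis (1), I would first select a precompact open neighborhood $U$ of $Z$ with $\phi_H^1(\overline U) \cap Y_0 = \emptyset$. I would then fix an autonomous smooth $\eta \colon N \to [0, \infty)$ compactly supported in $U$, with $\eta \equiv L$ on some neighborhood of $Z$ (so $X_\eta$ vanishes there), and for a parameter $c > 0$ to be chosen, set $F_K = c\eta \,\natural\, \bar H$. Since $\phi_{c\eta}^t$ fixes $Z$ pointwise, one computes $F_K(t,x) = c\eta(x) + \bar H(t,x)$ on $S^1 \times Z$; taking $cL > K + \|H\|_{\infty}$ then yields $F_K \in \mathcal{H}_K(N, Z)$. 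Supposing now $z$ were an $F_K$-chord from $Y_0$ to $Y_1$ of class $\alpha$, the decomposition $\phi_{F_K}^t = \phi_{c\eta}^t \circ \phi_{\bar H}^t$ would give $z(t) = \phi_{c\eta}^t(u(t))$ with $u(t) := \phi_{\bar H}^t(z(0))$. In the principal case when the auxiliary trajectory $u$ avoids $U$ throughout $[0,1]$, $\phi_{c\eta}^t$ would fix $u(t)$ at every $t$, making $z \equiv u$ a $\bar H$-chord from $Y_0$ to $Y_1$ of class $\alpha$.

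I would then associate the $H$-chord $w(t) := \phi_H^t(z(1))$, which runs from $z(1) \in Y_1$ to $z(0) \in Y_0$. The concatenation $z \ast w$ is a loop at $z(0)$ which contracts to the constant loop via the family
\[\ell_\tau(s) = \begin{cases} (\phi_H^{\tau s})^{-1}(z(0)) & s \in [0,1], \\ \phi_H^{\tau(s-1)}(\phi_H^\tau)^{-1}(z(0)) & s \in [1,2], \end{cases}\]
as $\tau$ decreases from $1$ to $0$. Consequently, in $\pi_1(N, Y_0 \cup Y_1)$ the class of $w$ would be $[z]^{-1} = \bar\alpha$, so $w$ would belong to $\mathsf{Ch}(H; Y_1, Y_0, \bar\alpha)$, contradicting hypothesis (2).

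The principal obstacle will be verifying that the $\bar H$-trajectory $u$ of a general point $z(0) \in Y_0$ can be arranged to avoid $U$ throughout $[0,1]$: hypothesis (1) ensures only $u(1) \notin U$, whereas intermediate values could in principle transit $U$. I plan to overcome this either with a temporal cut-off restricting the effective action of $c\eta$ to times close to $t = 1$ (where $u$ is forced out of $U$ by continuity and (1)), or by replacing $F_K$ with the conjugation-type Hamiltonian $\bar H \,\natural\, c\eta \,\natural\, H$, whose time-$1$ map $\phi_H^{-1}\phi_{c\eta}\phi_H$ has support in $\phi_H^{-1}(U)$---a set disjoint from $Y_0$ by (1). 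In either variant the subsequent chord analysis parallels the above, with the same homotopy argument forcing a contradiction with (2).
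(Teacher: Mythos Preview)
Your approach is essentially the paper's: build $F_K = c\eta \,\natural\, \bar H$ with $\eta$ a large bump supported in a neighborhood $U$ of $Z$ satisfying $\phi_H(\overline U)\cap Y_0=\emptyset$, and reduce a hypothetical $F_K$-chord to a $\bar H$-chord, hence an $H$-chord of class $\bar\alpha$.

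The obstacle you flag is not real, and your proposed workarounds (temporal cutoffs, conjugation) are unnecessary.  You only need $u(1)\notin U$, not that $u(t)\notin U$ for all $t$.  Indeed, from $z(0)\in Y_0$ and $\phi_H(\overline U)\cap Y_0=\emptyset$ one gets $u(1)=\phi_{\bar H}^1(z(0))\notin \overline U$, hence $\phi_{c\eta}^s(u(1))=u(1)$ for every $s\in[0,1]$.  This already gives $z(1)=\phi_{c\eta}^1(u(1))=u(1)\in Y_1$, and the family
\[
(s,t)\;\longmapsto\;\phi_{c\eta}^{st}\bigl(u(t)\bigr)
\]
is a homotopy from $u$ (at $s=0$) to $z$ (at $s=1$) whose endpoints $u(0)=z(0)\in Y_0$ and $u(1)=z(1)\in Y_1$ are fixed throughout.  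Therefore $[u]=[z]=\alpha$ in $\pi_1(N,Y_0\cup Y_1)$ regardless of whether $u(t)$ enters $U$ at intermediate times.  This is precisely the paper's argument (stated tersely there), and it is the same homotopy mechanism you already deploy in passing from the $\bar H$-chord to the $H$-chord.  With this observation your proof is complete without the extra machinery.
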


\begin{proof}
Fix a positive number $K$.
Since $Z$ is compact and $Y_0\cap\phi_H(Z)=\emptyset$, there exists an open neighborhood $U$ of $Z$ such that $Y_0\cap\phi_H(U)=\emptyset$.
Let $F\colon U\to\mathbb{R}$ be a Hamiltonian function with compact support such that $\inf_ZF+\sup_{S^1\times N}\bar{H}\geq K$.
Since $F\natural \bar{H}(t,x)=F(t,x)+\bar{H}(t,(\phi_F^t)^{-1}(x))$ and $\inf_ZF+\sup_{S^1\times N}\bar{H}\geq K$, $\inf_{S^1\times Z}F\natural \bar{H}\geq K$.

We claim that $\mathsf{Ch}(F\natural \bar{H};Y_0,Y_1,\alpha)=\emptyset$.
On the contrary, we assume that $\mathsf{Ch}(F\natural \bar{H};Y_0,Y_1,\alpha)\neq\emptyset$, take an element $z\colon [0,1]\to N$ of $\mathsf{Ch}(F\natural \bar{H};Y_0,Y_1,\alpha)$ and set $y=z(0)$.
Since $y\in Y_0$, $\phi_{\bar{H}}(y)\in\phi_{\bar{H}}(Y_0)=(\phi_H)^{-1}(Y_0)$.
Since $Y_0\cap\phi_H(U)=\emptyset$, $(\phi_H)^{-1}(Y_0)\cap U=\emptyset$ and thus $\phi_{\bar{H}}(y)\notin U$ .
Hence, since $F$ has support in $U$, $\phi_{F\natural \bar{H}}(y)=\phi_{\bar{H}}(y)$ and $\gamma_{\bar{H}}^y\in\mathsf{Ch}(\bar{H};Y_0,Y_1,\alpha)$.
$\gamma_{\bar{H}}^y\in\mathsf{Ch}(\bar{H};Y_0,Y_1,\alpha)$ implies $\gamma_{H}^{(\phi_H)^{-1}(y)}\in\mathsf{Ch}(H;Y_1,Y_0,\bar{\alpha})$ and it contradicts with $\mathsf{Ch}(H;Y_1,Y_0,\bar{\alpha})=\emptyset$.
Thus we complete the proof of $\mathsf{Ch}(F\natural \bar{H};Y_0,Y_1,\alpha)=\emptyset$.

Since $K$ is any positive number, $C_{BEP}(N,Y_0,Y_1,Z,\alpha)=+\infty$.
\end{proof}

\begin{proof}[Proof of Proposition \ref{displaceable main}]
To use Proposition \ref{displaceable prop}, we construct a Hamiltonian function $\hat{H}\colon S^1\times M\times \mathbb{A}(R)\to\mathbb{R}$ such that $(L\times \mathbb{A}(R)_0)\cap\phi^1_{\hat{H}}(Z\times \mathbb{A}(0))=\emptyset$ and $\mathsf{Ch}(\hat{H};L\times \mathbb{A}(R)_s,L\times \mathbb{A}(R)_0,\bar{\alpha})=\emptyset$.
Fix a sufficiently small positive number $\epsilon$.
By the assumption, we can take a Hamiltonian function $H\colon S^1\times M\to\mathbb{R}$ with compact support such that $||H||<E(Z;L)+\epsilon$ and $L\cap \phi_H^1(Z)=\emptyset$.
Since $|s_k|\cdot R_k>E(Z;L)$ and $\epsilon$ is sufficiently small, we can take a function $\rho_k\colon (-R_k,R_k)\to\mathbb{R}$ with compact support and such that
\begin{itemize}
\item $\rho_k\equiv 1$ in a neighborhood of \{0\},
\item $|\dot{\rho}_k(x)|<|s_k|\cdot (E(Z;L)+\epsilon)^{-1}$ for any $x\in (-R_k,R_k)$.
\end{itemize}
For $i\neq k$, we take a function $\rho_i \in C_c^\infty(-R_i,R_i)$ with $\rho_i\equiv 1$ in a neighborhood of $\{0\}$.
We define the Hamiltonian function $\hat{H}\colon S^1\times M\times \mathbb{A}(R)\to\mathbb{R}$ by
\[\hat{H}(t,x,p,q)=\prod_i\rho_i(p_i)\cdot H(t,x).\]
Then
\[(X_{\hat{H}}^t)_{(x,p,q)}=(\prod_i\rho_i(p_i)\cdot (X_H^t)_x,0,\ldots,0,\dot{\rho}_1(p_1)\cdot H(t,x),\ldots,\dot{\rho}_n(p_n)\cdot H(t,x) ).\]
Since  $\rho_i\equiv 1$ in a neighborhood of $\{0\}$, $(L\times \mathbb{A}(0))\cap\phi^1_{\hat{H}}(Z\times \mathbb{A}(0))=\emptyset$.
By the above comuputation of $X_{\hat{H}}$,  $\phi^1_{\hat{H}}(Z\times \mathbb{A}(0))\subset M\times\mathbb{A}(0)$ and thus $(L\times \mathbb{A}(R)_0)\cap\phi^1_{\hat{H}}(Z\times \mathbb{A}(0))=\emptyset$.
Since $|\dot{\rho}_k|<|s_k|\cdot (E(Z;L)+\epsilon)^{-1}$ and  $\int_0^1||H_t||_{L^\infty}dt=||H||<E(Z;L)+\epsilon$, $\int_0^1|\dot{\rho}_k(p_k)|\cdot |H(t,x)|dt=\dot{\rho}_k(p_k)\cdot\int_0^1|H(t,x)|dt$ is smaller than $|s_k|$ and hence $\mathsf{Ch}(\hat{H};L\times \mathbb{A}(R)_s,L\times \mathbb{A}(R)_0,\bar{\alpha}_s)=\emptyset$.
Thus Proposition \ref{displaceable prop} implies 
\[C(M,L,Z,R,s)=C_{BEP}(M\times \mathbb{A}(R),L\times \mathbb{A}(R)_0,L\times \mathbb{A}(R)_s,Z\times \mathbb{A}(0),\alpha_s)=+\infty.\]
\end{proof}


\end{document}